\newcommand{\G}{{\mathcal G}}
\newcommand{\R}{{\mathbb R}}
\newcommand{\C}{{\mathbb C}}
\newcommand{\area}{{\rm area}}
\newcommand{\sys}{{\rm sys}}
\newcommand{\ie}{{\it i.e.}}
\newcommand{\eg}{{\it e.g.}}
\newcommand{\Capacity}{{\rm Cap}}
\DeclareMathOperator\arsinh{arcsinh}
\numberwithin{equation}{section}
\newtheorem{theorem}{Theorem}[section]
\newtheorem{proposition}[theorem]{Proposition}
\newtheorem{corollary}[theorem]{Corollary}
\newtheorem{lemma}[theorem]{Lemma}
\theoremstyle{definition}
\newtheorem{definition}[theorem]{Definition}
\newtheorem{example}[theorem]{Example}
\newtheorem{remark}[theorem]{Remark}
\long\def\forget#1\forgotten{}
\begin{document}

\title[Systolically extremal surfaces] {Systolically extremal nonpositively curved surfaces are flat with finitely many singularities}

\subjclass[2010]{Primary 53C23; Secondary 53C20.}

\keywords{systole, systolic inequalities, extremal metrics, nonpositively curved metrics, piecewise flat surfaces with conical singularities}

\author[M.~Katz]{Mikhail G. Katz} \address{M.~Katz, Department of
Mathematics, Bar Ilan University, Ramat Gan 52900 Israel}
\email{katzmik@macs.biu.ac.il}

\author[S.~Sabourau]{St\'ephane Sabourau} \address{S.~Sabourau,
Universit\'e Paris-Est, Laboratoire d'Analyse et Math\'ema\-tiques
Appliqu\'ees (UMR 8050), UPEC, UPEMLV, CNRS, F-94010, Cr\'eteil,
France}\email{stephane.sabourau@u-pec.fr}

\begin{abstract}
The regularity of systolically extremal surfaces is a notoriously
difficult problem already discussed by M. Gromov in 1983, who proposed
an argument toward the existence of~$L^2$-extremizers exploiting the
theory of~$r$-regularity developed by P. A. White and others by the
1950s.  We propose to study the problem of systolically extremal
metrics in the context of generalized metrics of nonpositive
curvature.  A natural approach would be to work in the class of
Alexandrov surfaces of finite total curvature, where one can exploit
the tools of the completion provided in the context of Radon measures
as studied by Reshetnyak and others.  However the generalized metrics
in this sense still don't have enough regularity.  Instead, we develop
a more hands-on approach and show that, for each genus, every
systolically extremal nonpositively curved surface is piecewise flat
with finitely many conical singularities.  This result exploits a
decomposition of the surface into flat systolic bands and nonsystolic
polygonal regions, as well as the combinatorial/topological estimates
of Malestein--Rivin--Theran, Przytycki, Aougab--Biringer--Gaster and
Greene on the number of curves meeting at most once, combined with a
kite excision move.  The move merges pairs of conical singularities on
a surface of genus~$g$ and leads to an asymptotic upper bound
$g^{4+\epsilon}$ on the number of singularities.
\end{abstract}

\maketitle

\tableofcontents

\section{Introduction}
\label{one}

The \emph{systole} of a Riemannian manifold~$M$, denoted~$\sys(M)$, is
the least length of a noncontractible loop in~$M$.  A seminal text in
this area is Gromov's paper \emph{Filling Riemannian
manifolds}~\cite{Gr83}.  It deals in particular with the problem of
the existence of systolically extremal surfaces, \ie, surfaces with
maximal systole for a fixed area, or equivalently minimal area for a
fixed systole.  There is a discussion of systolically extremal
surfaces without curvature assumptions in \cite[pp.\;64--65]{Gr83}.
The proposed existence of the surfaces in question is only in a weak
sense as it relies on the theory of~$r$-regular convergence of
P. A. White and others, introduced in the thirties; see~\cite{Wh54}.
More precisely, systolically extremal surfaces are endowed with a
length metric structure along with a (possibly vanishing)~$L^2$-limit
of the conformal factors of some approximating Riemannian metrics.
Despite this preliminary result, the existence of more regular
systolically extremal surfaces without curvature assumptions remains
an open problem, except for the torus~\cite{katz}, the projective
plane~\cite{pu} and the Klein bottle~\cite{bav}, where systolically
extremal metrics have been determined (for other optimal Loewner-type
inequalities see \cite{Ba07}, \cite{IK}, \cite{KL}).  No conjecture is
available for other surfaces, except in genus 3 where Calabi
constructed nonpositively curved piecewise flat metrics with
systolically extremal-like properties; see~\cite{cal}, \cite{sab11}
(and \cite{SY} for related systolic-like properties in genus~$2$).
Partial results concerning systolically optimal metrics were obtained
by Bryant \cite{Br96} using PDE techniques, assuming regularity.

\subsection{Statement of the problem}

We will study the extremality problem in the context of surfaces
endowed with a Riemannian metric of nonpositive curvature.  The
\emph{systolic area}~$\sigma$ of a surface~$M$ with a fixed metric is
defined as
\[
\sigma(M) = \frac{\area(M)}{\sys(M)^2}.
\]
The optimal systolic area in genus~$g$ for nonpositive curvature is
defined as
\begin{equation}
\label{eq:SH}
\sigma_{\mathcal{H}}^{\phantom{I}}(g) = \inf_M \sigma(M)
\end{equation}
where the infimum is taken over all nonpositively curved genus~$g$
surfaces~$M$.  Here, the subscript~$\mathcal{H}$ alludes to Hadamard
as the surfaces considered are locally CAT$(0)$.  For a recent study
of Hadamard spaces see Ba\v c\'ak \cite{Ba18}.

For surfaces of nonpositive curvature of genus~$g=2$, we showed
in~\cite{KS} that the metric realizing the
infimum~$\sigma_{\mathcal{H}}^{\phantom{I}}(2)$ of the systolic area
is flat with finitely many conical singularities, in the conformal
class of the smooth completion of the affine complex algebraic
curve~$w^2=z^5-z$, and one
has~$\sigma_{\mathcal{H}}^{\phantom{I}}(2)=3\tan(\frac{\pi}{8})$.%
\footnote{The same conformal class contains an optimal metric for a
related first eigenvalue problem; see \cite{Na17}.  This optimal
metric similarly has finitely many conical singularities.}
A similar result holds for the metric realizing the infimum of the
systolic area among all nonpositively curved metrics on the surface
homeomorphic to the connected sum of three projective planes, also
known as Dyck's surface; see~\cite{KS15}.

The purpose of the present text is to extend this result to 
surfaces of arbitrary genus.  We will need a few more definitions to
cover the case of local infima, and not just global infima.

\begin{definition} 
\label{d11}
A closed surface~$M$ with a Riemannian metric with conical
singularities is locally isometric to the complex plane endowed with
the metric
\[
ds^2 = e^{2u(z)} \, |z|^{2 \beta} \, |dz|^2
\]
where~$\beta > -1$ and~$u\colon \C \to \R$ is a continuous function,
smooth everywhere except possibly at the origin.
\end{definition}

See Troyanov~\cite{tro91} for a detailed description.  Here, the point
of~$M$ corresponding to the origin in~$\C$ is a conical singularity of
total angle \mbox{$\theta = 2\pi(\beta+1)$}.

\begin{example}
Gluing together~$n$ Euclidean angular sectors of angle~$\theta_i,
i=1,\ldots,n$ side by side in circular order gives rise to a conical
singularity of total angle~$\theta_1+\cdots+\theta_n$.
\end{example}

Such a surface~$M$ is nonpositively curved (in Alexandrov's sense) if
and only if the Gaussian curvature of~$M$ is nonpositive away from the
conical singularities and the total angle at each conical singularity
is greater than~$2 \pi$.

\begin{definition} 
The space~$\mathcal{H}_g$
consists of nonpositively curved Riemannian metrics (possibly with
conical singularities) on a genus~$g$ surface.  This space will be
endowed with either of the following nonequivalent distances, namely,
the Gromov--Hausdorff distance or the Lipschitz distance;
see~\cite{Gr99}.  It fibers over the conformal moduli
space~$\mathcal{M}_g$ (see~\cite{tro91}):
\[
\begin{tikzcd}
\mathcal{H}_g \arrow{d} \\
\mathcal{M}_g
\end{tikzcd}
\]
where the base is~$(6g-6)$-dimensional and the fiber
infinite-dimensional.
\end{definition}

\begin{definition} \label{def:local}
A \emph{local infimum} of the systolic area
on~$\mathcal{H}_g$ is a real number~$\mu > 0$ such that
there exists an open
set~$\mathcal{U}\subseteq \mathcal{H}_g$ satisfying a strict
inequality
\begin{equation}
\label{eq:localinf}
\mu = \inf_{M \in \mathcal{U}} \sigma(M) < \inf_{M \in \partial
\mathcal{U}} \sigma(M).
\end{equation}
\end{definition}

Note that, though we use the term \emph{local}, this definition is not
entirely local as the strict inequality~\eqref{eq:localinf} may hold
for some open set~$\mathcal{U}$, but fail for arbitrarily small ones.

\begin{definition}
A nonpositively curved surface~$M \in \mathcal{H}_g$
(possibly with conical singularities) is \emph{locally extremal} for the
systolic area if there exists an open
set~$\mathcal{U}\subseteq \mathcal{H}_g$ containing~$M$ such
that
\[
\sigma(M) = \inf_{M \in \mathcal{U}} \sigma(M) < \inf_{M \in \partial
\mathcal{U}} \sigma(M).
\]
In such case we say that the local infimum~$\displaystyle \mu =
\inf_{M \in \mathcal{U}} \sigma(M)$ is \emph{attained} by~$M$.
\end{definition}

\subsection{Main results}

We can now state our main result concerning the existence of
systolically extremal metrics.

\begin{theorem} 
\label{theo:main}
Every local infimum of the systolic area on the
space~$\mathcal{H}_g$ of nonpositively curved genus~$g$
surfaces (possibly with conical singularities) is attained by a
nonpositively curved piecewise flat metric with at most~$\mathcal{N}_0 \leq 2^{25} \,
g^4 \log^2(g)$ conical singularities.
\end{theorem}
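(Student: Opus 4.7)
The plan is to (i) extract an extremal metric from a minimizing sequence via compactness, (ii) analyze the extremizer via a first-variation argument to obtain a piecewise flat structure, and (iii) combine combinatorial bounds on systems of curves with the kite excision move to control the number of cone points.

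First, existence of an attaining metric. Starting from a minimizing sequence $(M_n)\subset\mathcal{U}$ normalized so that $\sys(M_n)=1$ and $\area(M_n)\to\mu$, the strict inequality $\mu<\inf_{\partial\mathcal{U}}\sigma$ keeps the sequence in a compact part of the interior of $\mathcal{U}$ and controls its diameter. Gromov's precompactness theorem, applied using the CAT$(0)$ condition in lieu of a two-sided sectional curvature bound, yields a subsequential limit $M_\infty$ in either the Gromov--Hausdorff or Lipschitz topology. Lower semicontinuity of area and upper semicontinuity of systole under this convergence give $\sigma(M_\infty)\le\mu$, while $M_\infty\notin\partial\mathcal{U}$ forces $M_\infty\in\mathcal{H}_g$ attaining the infimum.

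Second, the structure of $M_\infty$. Let $\Sigma\subset M_\infty$ be the union of all systolic geodesics. Since systolic loops are simple closed geodesics in the nonpositively curved setting, the homotopic ones foliate maximal flat cylinders, the \emph{systolic bands}. The complement of the union of bands decomposes into polygonal regions bounded by arcs of systolic loops. Within any such polygonal region the systole is not attained, so a compactly supported conformal perturbation there leaves $\sys(M_\infty)$ unchanged while modifying the area. The first variation of area together with the nonpositive curvature constraint forces the Gaussian curvature to vanish away from isolated conical points; a boundary analysis then shows each region is a flat polygon whose vertices, if any, are cone points of total angle $>2\pi$. Combined with the flat bands this yields the piecewise flat structure.

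Third, the bound on the number of singularities. The number of bands equals the number of distinct free homotopy classes represented by systolic loops. Any two such loops meet in at most one point, for otherwise at a second intersection one could surger a segment of one loop onto a geodesic segment of the other to produce a noncontractible loop strictly shorter than the systole. The combinatorial estimates of Malestein--Rivin--Theran, Przytycki, Aougab--Biringer--Gaster and Greene therefore cap the number of homotopy classes, and hence of bands and adjacent polygons, by $2^{25}g^4\log^2(g)$. To bound the number of cone points per polygon I would apply the kite excision move: within a flat polygonal region, a pair of cone points sharing a flat neighborhood can be merged by excising a kite-shaped region and re-gluing along matched edges, giving a homeomorphic surface with one fewer singularity and with systolic area no larger. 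Since $M_\infty$ is already extremal, iterating the move terminates in a configuration realizing the claimed bound $\mathcal{N}_0$.

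The main obstacle lies in the compactness step: ensuring that the limit $M_\infty$ of a minimizing sequence of surfaces with Riemannian conical singularities does lie in $\mathcal{H}_g$, rather than having singularities that accumulate or degenerate to a lower-regularity Alexandrov surface outside the class. This is precisely the difficulty that led Gromov in 1983 to settle for only a weak $L^2$-convergence statement, and it is essentially by avoiding a global regularity claim on the limit --- and exploiting instead the combinatorial-variational rigidity enforced by local extremality together with the kite move --- that the argument should succeed. A secondary difficulty is checking that the kite excision neither raises the systole nor alters the genus, which requires a careful local verification inside the flat neighborhood of the merged pair.
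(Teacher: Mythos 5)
Your plan founders at the very first step, and the later steps inherit the damage. You extract an extremal metric $M_\infty$ as a Gromov--Hausdorff (or Lipschitz) limit of a minimizing sequence in $\mathcal{H}_g$, but $\mathcal{H}_g$ carries no uniform bound on the number of conical singularities, so no precompactness theorem puts the limit back in $\mathcal{H}_g$: curvature can concentrate and the limit is in general only a CAT$(0)$/Alexandrov surface of low regularity, with no reason for its singular set to be finite or even isolated. This is exactly the obstruction that reduced Gromov to a weak $L^2$ statement, and you acknowledge it as the ``main obstacle'' without resolving it --- indeed your suggestion that the argument ``avoids a global regularity claim on the limit'' contradicts your own steps (ii) and (iii), whose conformal first-variation argument and flat-polygon decomposition presuppose precisely the regularity and finiteness of singularities that are in question. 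The paper's route is structurally different: it never takes a limit in all of $\mathcal{H}_g$. It first approximates by piecewise flat metrics, proves properness of the systolic area only on the space of piecewise flat nonpositively curved metrics with at most $N$ cone points (via the capacity/collar estimate, Mumford compactness, and Troyanov's representation), minimizes in that restricted class, and then applies the kite excision to this \emph{restricted} minimizer to obtain the a priori bound $\mathcal{N}_0$ independent of $N$; a second restricted minimization with at most $\mathcal{N}_0$ singularities then attains the local infimum. The uniform bound is the engine that makes the compactness legitimate, not a feature read off an already-existing extremizer.

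Two further points would fail as written even granting an extremizer. First, your claim that two systolic loops meet at most once (by surgering at a second intersection) is false on singular nonpositively curved surfaces: the exchange argument only gives the equality case, and the paper exhibits a genus~$3$ piecewise flat example with systolic loops meeting twice; such pairs must be handled separately by showing the two intersection points are cone points of angle at least $4\pi$ and invoking Gauss--Bonnet, which contributes the $32(g-1)^2$ term alongside Greene's bound $Q(g)\leq 2^9 g^2\log g$. Second, your use of the kite move is logically inverted and incomplete: the move strictly decreases area, so on a (restricted) extremal metric it is used as a \emph{contradiction} ruling out a second cone point in a nonsystolic domain or edge --- one does not ``iterate it to termination'' on the extremizer; moreover the excision creates a new cone point at the merged vertex unless the kite is exact at a small singularity, and preserving the systole requires the case-by-case choice of diamond/exact kites relative to the domain boundary ((D$_1$), (E$_1$), (E$_1'$), (E$_2$), etc.), which is the technical heart you defer.
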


\begin{remark}
The upper bound~$\mathcal{N}_0$ can be expressed in terms of the
maximal number~$\bar{Q}(g)$ of systolic homotopy classes on a closed
nonpositively curved surface of genus~$g$ with finitely many conical
singularities; see Theorem~\ref{theo:bound}.  In turn, the
quantity~$\bar{Q}(g)$ can be bounded in terms of the number~$Q(g)$ of
pairwise nonhomotopic simple closed curves on a genus~$g$ surface; see
Theorem~\ref{theo:ABG} and Proposition~\ref{item:kissing3}.
\end{remark}

Thus the number of conical singularities is uniformly bounded for all
locally extremal metrics of nonpositive curvature on a surface of
fixed genus.  By Theorem~\ref{theo:main} and the conformal
representation of piecewise flat surfaces, see~\cite[\S 5]{tro86},
every locally extremal metric has a well-defined conformal class and a
well-defined continuous conformal factor (with finitely many zeros).
Moreover, the space of locally extremal nonpositively curved metrics
is finite-dimensional of dimension at most~$2^{25} \, (6g-6) \, g^4
\log^2(g)$.

\begin{corollary} 
For every genus~$g$, the global
infimum~$\sigma_{\mathcal{H}}^{\phantom{I}}(g)$ is attained by a
nonpositively curved piecewise flat metric on a genus~$g$ surface with
at most~$\mathcal{N}_0 \leq 2^{25} \, g^4 \log^2(g)$ conical singularities.
\end{corollary}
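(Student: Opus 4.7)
The plan is to realize the global infimum $\sigma_{\mathcal{H}}^{\phantom{I}}(g)$ as a local infimum in the sense of Definition~\ref{def:local}, so that Theorem~\ref{theo:main} applies directly. Set $\mu_0 := \sigma_{\mathcal{H}}^{\phantom{I}}(g) > 0$ and fix any constant $c > \mu_0$. I would then work with the sublevel set
\[
\mathcal{U} := \{ M \in \mathcal{H}_g : \sigma(M) < c \}.
\]

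The first step is to ensure that $\mathcal{U}$ is actually open. For this I would equip $\mathcal{H}_g$ with the Lipschitz distance (one of the two options offered in the definition of the space). Under this distance, a small bi-Lipschitz perturbation rescales lengths and areas by factors close to $1$, so both $\sys$ and $\area$ depend continuously on the metric; hence $\sigma$ is continuous and $\mathcal{U}$ is open.

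Next I would verify the two conditions in Definition~\ref{def:local}. By the definition of the global infimum, for every $\eps \in (0, c - \mu_0)$ there exists $M \in \mathcal{H}_g$ with $\sigma(M) < \mu_0 + \eps < c$, so $M \in \mathcal{U}$; this forces $\inf_{\mathcal{U}} \sigma = \mu_0$. Conversely, the continuity of $\sigma$ implies every $M \in \partial \mathcal{U}$ satisfies $\sigma(M) = c$, so $\inf_{\partial \mathcal{U}} \sigma = c > \mu_0$. Thus $\mu_0$ qualifies as a local infimum of the systolic area on $\mathcal{H}_g$.

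Applying Theorem~\ref{theo:main} to this local infimum then yields a nonpositively curved piecewise flat metric attaining $\mu_0 = \sigma_{\mathcal{H}}^{\phantom{I}}(g)$ on a genus $g$ surface, with at most $\mathcal{N}_0 \leq 2^{25} \, g^4 \log^2(g)$ conical singularities, as asserted. The only genuinely delicate point is the openness of the sublevel set $\mathcal{U}$, which is why I would avoid the Gromov--Hausdorff distance (under which the systole is only lower semicontinuous) in favor of the Lipschitz one. Since all the substantive work has already been carried out in the proof of Theorem~\ref{theo:main}, this corollary reduces to the short topological verification sketched above.
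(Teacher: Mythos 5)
Your proposal is correct and follows essentially the route the paper intends: the corollary is stated as an immediate consequence of Theorem~\ref{theo:main}, the point being that the global infimum qualifies as a local infimum, which you verify explicitly via the Lipschitz-open sublevel set $\{\sigma<c\}$ and the continuity of $\sys$, $\area$, and hence $\sigma$ under bi-Lipschitz perturbations. The only detail worth flagging is the (standard) fact that $\sigma_{\mathcal{H}}^{\phantom{I}}(g)>0$, needed so that $\mu_0$ meets the requirement $\mu>0$ in Definition~\ref{def:local}; this follows, e.g., from the capacity estimate \eqref{eq:cap}.
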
 

This type of result is apparently of interest in closed string theory;
see the work by Zwiebach and coauthors \cite{WZ}, \cite{Za91},
\cite{He18a}, \cite{He18b}, \cite{Na19}.  \\

Systolically extremal surfaces without curvature assumptions, if they
are sufficiently regular (for instance, if they have bounded integral
curvature in Alexandrov's sense; see~\cite{AZ}, \cite{res93},
\cite{tro}), are covered by their systolic loops.  This may no longer
be the case for locally extremal nonpositively curved surfaces.
Still, by analyzing the geometry and shape of nonsystolic domains of
locally extremal nonpositively curved surfaces in
Section~\ref{sec:shape}, we obtain the following.

\begin{corollary} 
\label{coro:connected}
The union of the systolic loops of a locally extremal nonpositively
curved surface is path-connected.
\end{corollary}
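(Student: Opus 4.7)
The plan is to argue by contradiction: assume the union $\Sigma$ of systolic loops has at least two path-components $\Sigma_{1},\ldots,\Sigma_{k}$ with $k\geq 2$, and construct a nearby nonpositively curved metric of strictly smaller systolic area. By Theorem~\ref{theo:main} the surface $M$ is piecewise flat with finitely many conical singularities, and by the bound on the number of systolic homotopy classes (Theorem~\ref{theo:bound}), each $\Sigma_{i}$ is a finite union of closed geodesics. In particular the components are at positive distance from one another, so there exists a shortest geodesic arc $\alpha$ joining $\Sigma_{1}$ to $\Sigma_{2}$, with length $\ell>0$, whose interior meets no systolic loop.

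First I would use the structural results of Section~\ref{sec:shape}: $M$ decomposes into flat systolic bands thickening the components $\Sigma_{i}$ and nonsystolic polygonal regions lying between them. A shortest cross-arc $\alpha$ is contained in the closure of a single nonsystolic polygon $P$ that touches both the band of $\Sigma_{1}$ and the band of $\Sigma_{2}$. Then I would apply a local deformation supported in a small neighborhood $N\subset P$ of $\alpha$ — either the kite excision move used elsewhere in the paper to merge a pair of conical singularities inside $P$, or a slight Lipschitz contraction of $P$ transverse to $\alpha$ — so that the new metric $M'$ is still piecewise flat, still nonpositively curved (any new cone point has total angle $\geq 2\pi$), agrees with $M$ outside $N$, and strictly decreases area by some amount $\delta>0$.

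Second, I would check that $\sys(M')\geq\sys(M)$. Loops disjoint from $N$ keep their lengths. Each systolic minimizer of $M$ lies inside $\Sigma$ and therefore avoids the open interior of $\alpha$, so the modification can be chosen small enough to miss every systolic closed geodesic; thus the systolic classes retain representatives of length $\sys(M)$. Because only finitely many homotopy classes have length below $\sys(M)+\eta$ for small $\eta>0$, and each nonsystolic class has minimal length strictly greater than $\sys(M)$, a sufficiently small deformation cannot push any nonsystolic class below $\sys(M)$. Combining this with the area decrease gives $\sigma(M')<\sigma(M)$, and since the deformation is arbitrarily small in both the Gromov--Hausdorff and the Lipschitz topologies, $M'$ lies in any open neighborhood $\mathcal{U}$ of $M$ in $\mathcal{H}_{g}$, contradicting the local extremality of $M$.

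The main obstacle is the second step: verifying that the local deformation does not create short loops in nonsystolic homotopy classes and does not shorten any systolic minimizer. This requires the quantitative geometry of the nonsystolic polygons developed in Section~\ref{sec:shape} — in particular, that systolic minimizers stay in the flat bands and that the complement admits genuine room for excision — and also requires the finite bound on the number of relevant homotopy classes so that a single small $\eta>0$ works uniformly. Once that room is available, the excision mechanism of the paper furnishes the required metric $M'$ and closes the contradiction, yielding path-connectedness of $\Sigma$.
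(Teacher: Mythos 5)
There is a genuine gap, and it sits exactly at the step you treat as routine: the construction of the area-decreasing deformation near the cross-arc $\alpha$. The kite excision move is only available when the main vertices of the kite are conical singularities with enough angle excess (admissibility requires $\measuredangle rpr' \leq \theta_p-2\pi$, so $p$ must be singular), and the nonsystolic domain containing the interior of $\alpha$ may contain no conical singularity at all; moreover the endpoints of $\alpha$ are in general regular points of systolic geodesics, so no kite with diagonal along $\alpha$ is admissible. Your fallback, a ``slight Lipschitz contraction of $P$ transverse to $\alpha$,'' is precisely what cannot be done naively: excising or squeezing a thin strip around $\alpha$ creates points of total angle less than $2\pi$ (positive curvature) at the feet of $\alpha$, or else changes the lengths of the boundary systolic loops, and Gauss--Bonnet with the metric fixed near $\partial P$ forbids removing area from a flat region without introducing such angle deficits. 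The critical configuration for connectivity --- two systolic bands separated by a flat cylinder with geodesic boundary and no singularities --- is exactly the case where neither of your two proposed moves produces a valid comparison metric, so the contradiction is never reached. (Your second step, that no nonsystolic class drops below $\sys(M)$ under a sufficiently small deformation, is fine and matches the paper's arguments in Propositions~\ref{prop:sys1}--\ref{prop:sys2}; it is the existence of the deformation that is missing.)

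The paper closes this case by a different route: it first proves that every nonsystolic domain of a locally extremal surface is a disk (Lemmas~\ref{lemma:one} and~\ref{l92}, Proposition~\ref{prop:one}). When the domain contains two singularities or has a concave corner, kite excisions of types ($D_1$), ($D'_1$), ($D''_1$) apply; in the remaining case the domain is shown, via convexity and Gauss--Bonnet, to be a flat cylinder with geodesic systolic boundary, and the area is decreased by shrinking the height of the cylinder, using that every closed geodesic not parallel to the boundary has length strictly greater than $\sys(M)$ --- a global move on the domain, not a local contraction near a cross-arc. Path-connectedness of the union of systolic loops is then immediate, since the complement of finitely many disjoint open disks in a closed connected surface is connected. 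To repair your argument you would essentially have to reprove this structure theory: identify the non-disk nonsystolic domain forced by disconnectedness of $\Sigma$ and handle the singularity-free cylinder case separately, since no deformation supported in a small neighborhood of $\alpha$ alone can work there.
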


\begin{remark}
A important tool in our study is the kite excision trick which merges
pairs of conical singularities, while keeping the systole fixed and
strictly decreasing the area.  More precisely, this trick consists in
excising a flat kite from a surface and identifying pairs of adjacent
sides of the excised surface; see~Section~\ref{six} for details.
\end{remark}

\subsection{Some open problems}

We conclude this introduction with a few open problems of varying
levels of difficulty.
\begin{enumerate}
\item By Corollary~\ref{coro:connected}, the systolic region of a locally
extremal nonpositively curved surface is connected.
Does this result still hold for the interior of the systolic region?
\item 
As we show in this article, the set of locally extremal nonpositively
curved metrics on a given genus~$g$ surface lies in a
finite-dimensional space.  A natural question to ask is whether the
set of locally extremal surfaces is finite, as it is the case for
hyperbolic surfaces, see~\cite{bav97} for a general setting.  
\item
Continuing with the previous item: do isosystolic deformations (i.e.,
deformations preserving the systolic area) of locally extremal
nonpositively curved surfaces exist?
\item 
To what extent can one relax the nonpositive curvature condition?  For
instance, what can be said about extremal metrics of curvature at
most~$\varepsilon$ with unit systole for small~$\varepsilon>0$?
\end{enumerate}

\subsubsection*{Acknowledgements} 

The second author would like to thank Thomas Richard and Marc Troyanov
for interesting discussions about Alexandrov surfaces.  We are
grateful to Barton Zwiebach for helpful comments on an earlier version
of the manuscript.

\section{Strategy}

Let us comment on the strategy of the proof of
Theorem~\ref{theo:main}.  We first discuss the general idea on a
sufficiently smooth extremal surface pointing out the main
difficulties.  After presenting a natural attempt to overcome these
difficulties in the context of Alexandrov surfaces, we finally develop
the strategy of the proof, sketching the argument.

\subsection{Local perturbation of extremal surfaces}

Suppose first that an extremal metric with nonpositive curvature
exists on a given surface and that this metric is sufficiently smooth.
By the flat strip theorem (see \cite[\S II.2.13]{BH}), two homotopic
systolic loops on a nonpositively curved surface bound a flat annulus
foliated by systolic loops.  Away from these flat systolic bands, in
regions where no systolic loop passes, the extremal surface must be
flat, for otherwise its curvature would be negative and its area could
be decreased by a local perturbation of the conformal factor,
affecting neither the systole, nor the sign of curvature, and
contradicting the extremality of the surface.  Of course, this
argument only holds for regions where the metric is smooth enough.  In
particular, it does not shed any light on the nature of the
singularities of the extremal metric, which necessarily exist in genus
at least two, otherwise the extremal surface would be flat.  Thus,
though appealing, this argument does not prove anything if we cannot
establish the existence of a smooth enough extremal metric \emph{a
priori}, which amounts to a classical issue in the calculus of
variations.

The existence of extremal metrics in a given conformal class can be
derived from compactness results on the conformal factor using its
log-subharmonicity in nonpositive curvature.  However, the regularity
of the metrics thus obtained is too weak for our purposes.  Moreover,
it is unknown whether the systolically optimal metric in a given
conformal class has finitely many singularities or not.  The advantage
of our technique based on the kite excision move (see
Section~\ref{six}) is that it has the mobility of moving about freely
in the moduli space of conformal classes and is not constrained to a
single class.

\subsection{Alexandrov surfaces}From a different (more geometric) 
point of view, the theory of Alexandrov surfaces with bounded integral
curvature provides the desired features regarding curvature measure,
compactness results and conformal representation; see~\cite{AZ},
\cite{res93}, \cite{tro}.  Loosely speaking, every Alexandrov surface
with bounded integral curvature can be described by its conformal
structure, represented by a (hyperbolic) Riemannian metric~$h$ of
curvature~$K_h$, and a curvature measure
\[
d\omega = K_h \, dA_h + d\mu
\]
where~$\mu$ is a Radon measure of total mass zero.  Here, the
function~$u$ in the conformal factor~$e^{2u}$ of the surface satisfies
$\Delta_h u = \mu$ in the distribution sense.  Therefore, it is
determined by the inverse of the Laplacian on~$(M,h)$ given by the
Green function~$G$, namely
\begin{equation} \label{eq:u}
u(x) = \int_M G(x,y) \, d\mu.
\end{equation}
Arguing as before, we seek to show that the curvature measure vanishes
in a neighborhood of a point where no systolic loop passes, by a
perturbation of the conformal factor.  For this purpose, we consider
variations of the Radon measure~$\mu$ in a neighborhood of this point,
leaving us with the following problem: even though the support of the
measure variation is localized in this neighborhood, we have no
control on the support of the variation of the conformal factor given
by~\eqref{eq:u}.  This could affect the value of the systole and,
therefore, the validity of the argument.  If the conformal factor is
modified in a given region the curvature measure is affected only in
this region, but it is not clear how to read off this property from
the curvature measure variation.

\subsection{A priori bounds}

We will follow a different strategy enabling us to establish \emph{a
priori} upper bounds on the number of conical singularities.  The
argument proceeds as follows.  In Section~\ref{sec:compact}, we first
recall that every nonpositively curved surface can be approximated by
a nonpositively curved piecewise flat surface with conical
singularities.  We also show that the systolic area defines a proper
functional when restricted to the moduli space of nonpositively curved
piecewise flat metrics whose number of conical singularity is
uniformly bounded.  This compactness result will allow us to derive
the existence of locally extremal surfaces from an \emph{a priori} upper bound
on the number of conical singularities of an almost locally extremal piecewise
flat surface.  Such an upper bound follows from a polynomial bound on
the number of systolic loops up to homotopy; see~Section~\ref{two}.

More specifically, flat systolic bands and isolated systolic loops
decompose any nonpositively curved piecewise flat surface into
nonsystolic polygonal regions whose number of edges is related to the
number of systolic homotopy classes, and is therefore uniformly
bounded; see~Section~\ref{sec:decomp}.  We introduce the kite excision
trick in Section~\ref{six}.  We exploit the trick to deduce that each
nonsystolic polygonal region of an almost locally extremal piecewise flat
surface has at most one conical singularity;
see~Sections~\ref{sec:exploit} and~\ref{sec:shape}.  It follows that
the number of conical singularities of this surface is uniformly
bounded as desired.  

The kite excision trick has the effect of moving a pair of
singularities lying in the same nonsystolic region closer and closer
until they merge into a single nonpositively curved conical
singularity, while keeping the systole fixed and strictly decreasing
the area.
More precisely, it consists in excising a flat kite from a surface and identifying pairs of adjacent sides of the excised surface; see~Sections~\ref{six} and~\ref{sec:comparison}.  Moreover,
this construction gives a way of reaching a locally extremal surface. \\

We will assume throughout that all surfaces are of genus at least~$2$ to
avoid the torus case where the extremal systolic problem was
completely solved by Loewner; see~\cite[Theorem~5.4.1]{katz}.

\section{Metric approximation and compactness} \label{sec:compact}

We present a few classical results which will be used in the proof of
the existence of locally extremal nonpositively curved piecewise flat metrics
for each genus~$g$; see~Theorem~\ref{theo:bound}.  We start with the
following metric approximation result.

\begin{proposition} 
\label{prop:toponogov}
For every genus~$g\geq2$, the infimum of the systolic area over the
following three spaces yields the same
value~$\sigma_{\mathcal{H}}^{\phantom{I}}(g)$:
\begin{enumerate}
\item
over all nonpositively curved Riemannian metrics;
\item
over all nonpositively curved Riemannian metrics with conical
singularities in the sense of Definition~$\ref{d11}$;
\item
over all nonpositively curved piecewise flat metrics with conical
singularities.
\end{enumerate}
\end{proposition}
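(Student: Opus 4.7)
The plan is to sandwich the three infima. Since class~(1) and class~(3) are each subclasses of~(2), one immediately has $\inf_{(2)} \sigma \leq \inf_{(1)} \sigma$ and $\inf_{(2)} \sigma \leq \inf_{(3)} \sigma$, so it suffices to establish (a)~$\inf_{(1)} \sigma \leq \inf_{(2)} \sigma$, by smoothing conical singularities, and (b)~$\inf_{(3)} \sigma \leq \inf_{(1)} \sigma$, by piecewise flat approximation of smooth nonpositively curved metrics using Toponogov comparison — the step which gives the proposition its name.

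For (a), I fix an $M$ in class~(2) with finitely many conical singularities $p_1, \ldots, p_k$ of total angles $\theta_i > 2\pi$. Around each $p_i$ I excise a small metric disc of radius $\varepsilon$ and glue in a smooth, rotationally symmetric cap of nonpositive Gaussian curvature matching the cone of angle $\theta_i$ on the boundary. Such a cap is produced as a surface of revolution whose profile is a smoothing of the conical one; the total curvature of the cap is the conical defect $2\pi - \theta_i < 0$, which can be realized by a smoothly distributed negative curvature density concentrated in an arbitrarily small region. The resulting metric $M_\varepsilon$ is smooth and nonpositively curved, hence lies in class~(1), and $M_\varepsilon \to M$ in the Lipschitz distance as $\varepsilon \to 0$; therefore $\sigma(M_\varepsilon) \to \sigma(M)$, and (a) follows on taking the infimum.

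For (b), start with a smooth $M$ in class~(1), fix $\delta > 0$, and pick a geodesic triangulation $\mathcal{T}_\delta$ of $M$ whose triangles have diameter at most $\delta$ and angles bounded below by a positive constant independent of $\delta$ (such triangulations exist on any compact Riemannian surface). Define $M_\delta$ by replacing each geodesic triangle $\triangle$ of $\mathcal{T}_\delta$ by the Euclidean triangle $\triangle^*$ with the same edge lengths, and gluing along common edges. Each face of $M_\delta$ is flat, so to show $M_\delta$ belongs to class~(3) it suffices to verify that the cone angle at each vertex $v$ is at least $2\pi$. Since $M$ is $\CAT(0)$, Toponogov's theorem asserts that for every geodesic triangle $\triangle$ of $\mathcal{T}_\delta$ the Euclidean comparison angle of $\triangle^*$ at $v$ is at least the angle of $\triangle$ at $v$ on $M$. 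Summing around $v$,
\[
\theta_{M_\delta}(v) \;=\; \sum_{\triangle \ni v} \mathrm{angle}_{\triangle^*}(v) \;\geq\; \sum_{\triangle \ni v} \mathrm{angle}_\triangle(v) \;=\; 2\pi,
\]
so $M_\delta$ is nonpositively curved piecewise flat, as required.

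It remains to show $\sigma(M_\delta) \to \sigma(M)$ as $\delta \to 0$. The standard quantitative comparison estimate — the angle defect of a geodesic triangle of diameter at most $\delta$ on a surface with $|K| \leq K_0$ is of order $K_0 \delta^2$ — implies that the simplicial identification $M \to M_\delta$ is $(1+o(1))$-bi-Lipschitz, hence $\area(M_\delta) \to \area(M)$ and $\sys(M_\delta) \to \sys(M)$. Therefore $\inf_{(3)} \sigma \leq \sigma(M)$, which yields (b). The main technical obstacle is the uniform bi-Lipschitz control of this identification — in particular the convergence of the systoles, which must rule out the spurious appearance of short noncontractible loops in $M_\delta$; this relies on the uniform diameter and nondegeneracy bounds of $\mathcal{T}_\delta$ together with the compactness of $M$.
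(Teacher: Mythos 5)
Your skeleton is sound and its core step is the paper's: replace small geodesic triangles by their Euclidean comparison triangles and use the CAT$(0)$ angle comparison to see that every cone angle of the resulting piecewise flat surface is at least $2\pi$, plus a preliminary smoothing of the conical points to pass from class (2) to class (1); the chain of inequalities you set up closes correctly. Where you genuinely diverge is in how the area and systole of the flattened surface are controlled. You invoke a quantitative statement --- that the simplicial identification $M\to M_\delta$ is $(1+o(1))$-bi-Lipschitz, deduced from the $O(K_0\delta^2)$ angle-defect estimate and a triangulation with angles bounded below --- and you correctly flag this as the technical crux, but you do not prove it; it is essentially Burago's approximation lemma, which the paper states separately as Proposition~\ref{prop:approx} and Corollary~\ref{coro:approx}. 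The paper's own proof of this proposition avoids any bi-Lipschitz map by a one-sided comparison: after rescaling so that $-1\le K\le 0$, it uses right-angled triangles and the explicit hyperbolic area $2\arctan(\tanh\frac a2\tanh\frac b2)$ to get $\area(\Delta)\ge(1-\varepsilon)\area(\Delta_0)$, hence $\area(M_0)\le(1+\varepsilon)\area(M)$, and it gets $\sys(M_0)\ge\sys(M)$ directly because any path inside a flat triangle $\Delta_0$ with endpoints on its boundary corresponds to a shorter path in $\Delta$ by the comparison with curvature $0$; this one-sided inequality is all that is needed for $\inf_{(3)}\le\inf_{(1)}$ and it dispatches, with no extra work, exactly the ``spurious short loops'' issue you worry about. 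Your route buys the stronger conclusion $\sigma(M_\delta)\to\sigma(M)$ at the price of a heavier (though standard and citable) approximation lemma; the paper's route is more elementary and self-contained. Two small caveats on your part (a): the metric of Definition~\ref{d11} is not an exact cone near $p_i$, so the rotationally symmetric cap should be replaced by a smoothing of the conformal factor (e.g.\ a convex modification of the profile, or replacing $|z|^{2\beta}$ by $(|z|^2+\varepsilon^2)^{\beta}$, which preserves nonpositive curvature); and the resulting comparison of systoles is an $O(\varepsilon)$ additive estimate coming from the smallness of the modified disks rather than a Lipschitz-distance statement, though either formulation suffices.
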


\begin{proof}
A metric~$M \in \mathcal{H}_g$ (\eg, a nonpositively
curved metric with conical singularities) can be approximated by a
smooth one of nonpositive curvature by smoothing out each of the
conical singularities, without significantly affecting the area and
the systole.

Next, rescale the smooth surface~$M$ (without changing the systolic
area) so that its Gaussian curvature~$K$ satisfies~$-1\leq K \leq 0$.
For every~$\varepsilon>0$, we can partition~$M$ into sufficiently
small right-angled geodesic triangles~$\Delta\subseteq M$ so
that~$\area(\Delta) \geq (1-\varepsilon) \,\area(\Delta_0)$,
where~$\Delta_0$ is the corresponding flat triangle with the same
sidelengths.  Indeed, by the Alexandrov--Toponogov comparison theorem,
comparing~$M$ with the spaceform of \emph{smaller} constant curvature
(namely,~$-1$), the area of~$\Delta$ is at least the area of the
comparison right-angled hyperbolic triangle.  The area of the
hyperbolic triangle is~$2\arctan(\tanh \frac{a}{2}\tanh\frac{b}{2})$
where~$a,b$ are the two sides.  Thus the lower bound
on~$\area(\Delta)$ can be made as close to~$\frac12ab$ as we wish
for~$a,b$ small enough, exploiting the developments of~$\arctan$
and~$\tanh$.

By the Alexandrov--Toponogov comparison theorem, comparing with the
spaceform of \emph{greater} constant curvature (namely,~$0$), the
angles of~$\Delta_0$ are no smaller than the corresponding angles
of~$\Delta$.  Hence the total angles of the conical singularities of
the piecewise flat surface~$M_0$ obtained from~$M$ by replacing each
$\Delta$ by~$\Delta_0$ are at least~$2\pi$, so that~$M_0$ has
nonpositive curvature.

Since~$\area(\Delta_0)\leq\frac12ab$, replacing~$\Delta$ by~$\Delta_0$
increases the area by a factor at most~$1+\varepsilon$, so that we
have tight control on the area of~$M_0$.  Meanwhile, each loop
in~$M_0$ decomposes into paths where each path is contained in a
suitable triangle~$\Delta_0\subseteq M_0$ with endpoints on the
boundary of~$\Delta_0$.  The corresponding path in~$\Delta\subseteq M$
is necessarily shorter by the Alexandrov--Toponogov comparison of~$M$
with the spaceform of \emph{greater} constant curvature (namely,~$0$)
and therefore~$\sys(M_0)\geq\sys(M)$ and
thus~$\sigma(M)\geq(1-\varepsilon)\,\sigma(M_0)$.
\end{proof}

More generally, one has the following result on metric approximation,
announced by Reshetnyak~\cite{Res59} and proved by
Yu.~Burago~\cite[Lemma~6]{Bur04}, in the more general setting of
Alexandrov surfaces.

\begin{proposition} 
\label{prop:approx}
Let~$M$ be a surface of genus~$g$.  Every Riemannian metric with
conical singularities on~$M$ is bilipschitz close to a piecewise flat
metric with conical singularities.  In particular, the systole and the
area of the two metrics are close.
\end{proposition}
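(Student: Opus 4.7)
The plan is to triangulate $M$ by sufficiently fine geodesic triangles, with each conical singularity appearing as a vertex, and then replace every geodesic triangle by the flat Euclidean triangle with the same edge lengths. Away from the conical singularities the metric is smooth Riemannian of bounded curvature, so the Alexandrov--Toponogov argument used in the proof of Proposition \ref{prop:toponogov} already produces a $(1+\varepsilon)$-bilipschitz flat comparison triangulation on that region.

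Near each conical singularity $p_i$ I would use the Troyanov chart of Definition \ref{d11}: the metric takes the form $e^{2u(z)}|z|^{2\beta_i}|dz|^2$ with $u$ continuous, and the total angle at $p_i$ is $\theta_i = 2\pi(\beta_i+1)$. The background cone metric $|z|^{2\beta_i}|dz|^2$ is isometric to the flat Euclidean cone of total angle $\theta_i$ via $(r,\theta)\mapsto(r^{\beta_i+1}/(\beta_i+1),(\beta_i+1)\theta)$. Since $u$ is continuous at $0$, on a small ball $B_\rho(p_i)$ the original metric is bilipschitz close to a constant rescaling of this flat cone, with bilipschitz constant tending to $1$ as $\rho\to 0$. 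I would then triangulate $B_\rho(p_i)$ by a fan of $N$ narrow geodesic triangles: pick $N$ points on the geodesic circle of radius $\rho$, join each to $p_i$ by a radial geodesic, and connect consecutive points by geodesic chords.

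Matching this fan to the smooth-region triangulation along the circles $\partial B_\rho(p_i)$ and replacing each geodesic triangle by its flat comparison triangle yields a piecewise flat metric $M_0$ on $M$. The combined bilipschitz estimates on the two regions show that the identity map $M \to M_0$ is $(1+\varepsilon)$-bilipschitz, and continuity of the systole and the area under bilipschitz convergence gives the final assertion.

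The main obstacle is controlling the cone angles of $M_0$ at the $p_i$. In the flat cone model the $N$ comparison pie-slices have apex angles summing exactly to $\theta_i$, since a fan of pie-slices of opening $\theta_i/N$ literally glues to a Euclidean cone of angle $\theta_i$. The bilipschitz equivalence between the actual metric on $B_\rho(p_i)$ and the cone model forces the apex angles of the geodesic pie-slices of $M$ to agree with the cone-model values up to an error tending to $0$ as $\rho\to 0$; in particular the cone angle of $M_0$ at $p_i$ converges to $\theta_i$. Auxiliary vertices introduced by the triangulation become conical singularities of $M_0$ whose total angle lies within $o(1)$ of $2\pi$, which is permitted by Definition \ref{d11}.
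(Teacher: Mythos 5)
Your argument follows essentially the same route as the paper's: partition $M$ into small geodesic triangles with the conical singularities at the vertices, replace each triangle by the Euclidean comparison triangle with the same side lengths, and glue the per-triangle $(1+\varepsilon)$-bilipschitz maps. The paper simply delegates this construction to Burago's Lemma~6, whereas you carry out the local analysis near the singularities explicitly via the Troyanov chart; your cone-angle bookkeeping at the $p_i$ is correct but not actually needed for this statement (it only becomes relevant for the curvature-sign refinement in Corollary~\ref{coro:approx}).
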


\begin{proof}
The argument of~\cite[Lemma~6]{Bur04} proceeds as follows.  Construct
a suitable partition~$\mathcal{T}$ of~$M$ into small geodesic
triangles, where the conical singularities of~$M$ are located at the
vertices and where each triangle of~$\mathcal{T}$ is bilipschitz close
to its comparison flat triangle with the same side lengths.  Replacing
each triangle of~$\mathcal{T}$ with its comparison flat triangle gives
rise to a piecewise flat metric with conical singularities on~$M$.
Putting together the bilipschitz maps between triangles yields a
bilipschitz map between the two metrics on~$M$, with bilipschitz
constant close to~$1$.
\end{proof}

\begin{corollary} \label{coro:approx}
In Proposition~$\ref{prop:approx}$, if the
Riemannian metric with conical singularities on~$M$ is nonpositively
curved then the piecewise flat metric can be assumed to be
nonpositively curved, as well.
\end{corollary}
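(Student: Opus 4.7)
The plan is to verify that the piecewise flat metric produced by Proposition~\ref{prop:approx} is automatically of nonpositive curvature, by importing the angle-comparison argument from the last paragraph of the proof of Proposition~\ref{prop:toponogov}. Since a piecewise flat surface is nonpositively curved in Alexandrov's sense precisely when the total angle at each vertex is at least~$2\pi$, the task reduces to a bookkeeping of angles at the vertices of the triangulation~$\mathcal{T}$ provided by Yu.~Burago's construction.

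First I would fix the geodesic triangulation~$\mathcal{T}$ of Proposition~\ref{prop:approx}, arranged so that every conical singularity of~$M$ is a vertex of~$\mathcal{T}$. To each triangle~$\Delta \in \mathcal{T}$ I would apply the Alexandrov--Toponogov comparison with the Euclidean spaceform of curvature~$0$, which has \emph{greater} curvature than~$M$: the three angles of the flat comparison triangle~$\Delta_0$ with the same sidelengths as~$\Delta$ are then no smaller than the corresponding angles of~$\Delta$. Summing this inequality around any vertex~$v$ of~$\mathcal{T}$, the total angle at~$v$ of the piecewise flat surface~$M_0$ obtained by replacing each~$\Delta$ by~$\Delta_0$ is at least the total angle at~$v$ of~$M$, which is itself~$\geq 2\pi$ by the nonpositive curvature hypothesis on~$M$ (with equality at smooth points). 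Hence~$M_0$ is nonpositively curved, while the bilipschitz comparison between~$M$ and~$M_0$ furnished by Proposition~\ref{prop:approx} is unaffected by this refinement.

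The only delicate point I anticipate is the validity of Alexandrov--Toponogov comparison on triangles whose corners may be conical singularities of total angle~$\geq 2\pi$ rather than smooth points. This is resolved by the observation that a Riemannian metric with such conical singularities is locally CAT$(0)$, hence an Alexandrov surface of curvature bounded above by~$0$, to which Toponogov comparison with the Euclidean plane applies directly on triangles small enough to lie in a CAT$(0)$ neighborhood. The required smallness is already built into Burago's triangulation, so no further refinement of~$\mathcal{T}$ is necessary to conclude nonpositive curvature of the piecewise flat approximation.
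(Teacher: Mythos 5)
Your proposal is correct and follows essentially the same route as the paper: the paper's proof of Corollary~\ref{coro:approx} simply observes that, in Burago's triangulation from Proposition~\ref{prop:approx}, the Alexandrov--Toponogov angle comparison with the flat comparison triangles shows the total angles at the vertices of the piecewise flat surface are at least~$2\pi$, exactly as at the end of the proof of Proposition~\ref{prop:toponogov}. Your additional remark about the comparison being valid near conical singularities (locally CAT$(0)$) is a reasonable elaboration of what the paper leaves implicit.
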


\begin{proof}
Examining the construction in the proof of
Proposition~$\ref{prop:approx}$, we note that if the initial
Riemannian metric with conical singularities on~$M$ is nonpositively
curved, then an Alexandrov--Toponogov comparison of triangle angles
shows that the associated piecewise flat metric has nonpositive
curvature as at the end of the proof of
Proposition~\ref{prop:toponogov}.
\end{proof}

\begin{remark}
As mentioned earlier, the metric constructions in the proofs of
Proposition~\ref{prop:approx} and Corollary~\ref{coro:approx} go
through in the class of Alexandrov surfaces.  Thus, these two results
still hold for Alexandrov surfaces.  In particular, the optimal
systolic area~$\sigma_{\mathcal{H}}^{\phantom{I}}(g)$ for nonpositive
curvature agrees with the infimum of the systolic area over all
genus~$g$ surfaces with nonpositive curvature in the sense of
Alexandrov, or equivalently CAT(0) surfaces.
\end{remark}

The next result on conformal compactness
(Proposition~\ref{prop:conformal}) is due to Gromov~\cite[\S 5]{Gr83}.
We present the proof since the original arguments are somewhat
scattered.  We first state a result used in the proof.

\begin{theorem}[The collar theorem] 
\label{t34}
On a closed hyperbolic surface~$M$, a simple closed geodesic~$\gamma$
of hyperbolic length~$\ell$ admits a tubular neighborhood
\[
\mathcal{C} = \{ x \in M \mid d_{\rm hyp}(x,\gamma) < w \}
\]
of width
\begin{equation}
\label{e31}
w=\arsinh \left( \frac{1}{\sinh(\frac{\ell}{2})} \right)
\end{equation}
diffeomorphic to an annulus.
\end{theorem}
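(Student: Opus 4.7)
The plan is to lift to the universal cover $\mathbb H^2$ and show that a suitably chosen strip about the axis of $\gamma$ descends to an embedded annulus of width $w$. Let $h\in\pi_1(M)$ be the primitive deck transformation corresponding to $\gamma$; it is a hyperbolic isometry with axis $\alpha\subset\mathbb H^2$ and translation length $\ell$. The preimage of the candidate tube $\mathcal C$ is the union of the open strips $g\,S_w(\alpha)$, where $S_w(\alpha)=\{x\in\mathbb H^2:d_{\rm hyp}(x,\alpha)<w\}$ and $g$ ranges over $\pi_1(M)$. Thus $\mathcal C$ is an embedded annulus diffeomorphic to $S_w(\alpha)/\langle h\rangle$ precisely when these strips are pairwise disjoint across distinct cosets of $\langle h\rangle$; equivalently,
\[
d_{\rm hyp}(\alpha,g\alpha)\geq 2w\quad\text{for every}\ g\in\pi_1(M)\setminus\langle h\rangle.
\]

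The first step is to reduce to a distance estimate between disjoint axes. Since $\gamma$ is simple, any two distinct lifts of $\gamma$ in $\mathbb H^2$ must be disjoint, as a common point would descend to a self-intersection of $\gamma$ in $M$. Combined with the primitivity of $h$ and the absence of orientation-reversing elements in $\pi_1(M)$, this forces the stabilizer of $\alpha$ to be exactly $\langle h\rangle$. Hence for $g\notin\langle h\rangle$ the geodesics $\alpha$ and $\alpha':=g\alpha$ are disjoint, and they admit a unique common perpendicular $\eta$ of length $d:=d_{\rm hyp}(\alpha,\alpha')$.

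The second step is the sharp trigonometric bound $\sinh(d/2)\sinh(\ell/2)\geq 1$, which is equivalent to $d\geq 2w$ by the defining identity $\sinh(w)\sinh(\ell/2)=1$ and the monotonicity of $\sinh$. Setting $h':=ghg^{-1}$, we get a second hyperbolic isometry of translation length $\ell$ with axis $\alpha'$. Commuting hyperbolic elements must share an axis, so $h$ and $h'$ do not commute and $\langle h,h'\rangle$ is non-elementary. By discreteness of $\pi_1(M)$, every nontrivial word in $h,h'$ is either hyperbolic or parabolic (hyperbolic surface groups being torsion-free). Placing $\eta$ along the imaginary axis in the upper half-plane, a direct Möbius-matrix computation for $hh'^{-1}$, equivalent to the right-angled hexagon identity in $\mathbb H^2$ applied to the polygon with sides $\eta$, a translate $h(\eta)$, and segments of length $\ell$ along $\alpha$ and $\alpha'$, yields an explicit relation among $|\mathrm{tr}(hh'^{-1})|$, $\ell$, and $d$. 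Imposing $|\mathrm{tr}(hh'^{-1})|\geq 2$ (non-ellipticity) then extracts precisely $\sinh(\ell/2)\sinh(d/2)\geq 1$, with equality in the parabolic degeneration.

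The technical heart of the proof is the final trigonometric identity: one must work with the specific word $hh'^{-1}$, rather than $hh'$ or a more complicated combination, so that discreteness forces the sharp bound on $d$. A clumsier choice of test element yields only a suboptimal collar width. The formula $w=\arsinh(1/\sinh(\ell/2))$ is tight, realized in the limit in which $hh'^{-1}$ degenerates to a parabolic, corresponding to the boundary of the Fuchsian locus where two cusps coalesce along $\eta$. Once $d\geq 2w$ is established for every $g\notin\langle h\rangle$, the strips $g\,S_w(\alpha)$ are pairwise disjoint, $\mathcal C$ is the embedded open annulus $S_w(\alpha)/\langle h\rangle$, and the theorem follows.
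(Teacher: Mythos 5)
The paper does not prove this statement at all: it is quoted with a pointer to Buser \cite[Theorem~4.1.1]{Bus92}, whose argument runs inside the surface, via cut-and-paste along the simple geodesic and the trigonometry of right-angled polygons. Your route through the deck group is genuinely different and is a legitimate classical alternative: reducing embeddedness of the tube to $d_{\rm hyp}(\alpha,g\alpha)\geq 2w$ for all $g\notin\langle h\rangle$ is correct (the open strips $gS_w(\alpha)$ exhaust the preimage of $\mathcal{C}$, and the stabilizer of $\alpha$ is $\langle h\rangle$ by simplicity and primitivity), and the trace computation does deliver the sharp constant: normalizing so the two axes are the half-circles of radii $e^{\pm d/2}$ centered at $0$, one gets $\mathrm{tr}(hh'^{-1})=2-4\sinh^2(\ell/2)\sinh^2(d/2)$, so non-ellipticity ($|\mathrm{tr}|\geq 2$, from discreteness and torsion-freeness) forces $\sinh(\ell/2)\sinh(d/2)\geq 1$, i.e.\ $d\geq 2w$ with $w$ as in \eqref{e31}; note the non-strict inequality suffices because the strips are open.

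Two points need attention before this is complete. First, disjointness of distinct lifts does \emph{not} by itself give a common perpendicular: disjoint geodesics in $\HH^2$ may be asymptotic (sharing an ideal endpoint), in which case $d_{\rm hyp}(\alpha,g\alpha)=0$ and your trigonometric step has nothing to work with, while the conclusion $d\geq 2w$ would fail. You must exclude this case, e.g.\ by the standard fact that two hyperbolic elements of a discrete group cannot have exactly one common fixed point (equivalently, in the asymptotic configuration one of $hh'^{-1}$, $hh'$ would be parabolic, impossible in a cocompact Fuchsian group). Second, which word carries the information depends on the relative translation direction of $h'=ghg^{-1}$ along $\alpha'$: after replacing $h'$ by its inverse if necessary the displayed trace identity holds, whereas the other product has trace $2+2\sinh^2(\ell/2)(1+\cosh d)\geq 2$ and yields nothing; since both words are nontrivial elements of the group, this replacement is harmless, but it should be said. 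With those two additions your outline is a sound proof, independent of Buser's.
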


A proof can be found in~\cite[Theorem~4.1.1]{Bus92}.

\begin{proposition} 
\label{prop:conformal}
Let~$M$ be a closed surface of genus~$g \geq 2$, and let~$K>0$.  The
space of conformal classes of Riemannian metrics (possibly with
conical singularities) on~$M$ with systolic area at most~$K$ is a
compact set in the conformal moduli space~$\mathcal{M}_g$.
\end{proposition}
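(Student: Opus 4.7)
The plan is to apply the Mumford compactness criterion: a subset of $\mathcal{M}_g$ has compact closure if and only if the hyperbolic systole (length of the shortest closed geodesic in the hyperbolic representative of each conformal class) stays bounded away from~$0$. So I would take a sequence of metrics $g_n$ on $M$ (possibly with conical singularities) satisfying $\sigma(g_n) \leq K$, let $h_n$ denote the smooth hyperbolic metric in the conformal class of $g_n$ (uniformization still applies in the presence of conical singularities, which are just marked points of the underlying Riemann surface), and write $g_n = e^{2u_n} h_n$ for a measurable conformal factor. I then argue by contradiction: assume the hyperbolic systoles $\ell_n = \sys(h_n)$ tend to~$0$ along some subsequence, and aim to force $\sigma(g_n) \to \infty$.

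To that end, let $\gamma_n$ be a simple $h_n$-geodesic of length $\ell_n$. By the collar theorem (Theorem~\ref{t34}), $\gamma_n$ admits an embedded collar $\mathcal{C}_n$ of hyperbolic width $w_n = \arsinh\bigl(1/\sinh(\ell_n/2)\bigr) \to +\infty$. In Fermi coordinates $(t,\theta) \in (-w_n,w_n) \times \R/\ell_n\Z$ the hyperbolic metric reads $dt^2 + \cosh^2 t\, d\theta^2$, and for every fixed~$t$ the parallel loop $\gamma_n^t = \{t\}\times \R/\ell_n\Z$ is freely homotopic to $\gamma_n$, hence noncontractible in~$M$. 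Its $g_n$-length is
\[
L_n(t) = \int_0^{\ell_n} e^{u_n(t,\theta)} \cosh t \, d\theta.
\]

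The key estimate is a Cauchy--Schwarz / Fubini argument in the collar. From
\[
L_n(t)^2 \leq \ell_n \cosh t \int_0^{\ell_n} e^{2u_n(t,\theta)} \cosh t \, d\theta
\]
and the inequality $\sys(g_n) \leq L_n(t)$, integration in~$t$ over $(-w_n,w_n)$ yields
\[
\sys(g_n)^2 \int_{-w_n}^{w_n} \frac{dt}{\cosh t} \leq \ell_n\, \area(\mathcal{C}_n,g_n) \leq \ell_n\, \area(g_n),
\]
so that
\[
\sigma(g_n) \;\geq\; \frac{1}{\ell_n} \int_{-w_n}^{w_n} \frac{dt}{\cosh t}.
\]
Since $\int_{-\infty}^{\infty} dt/\cosh t = \pi$ and $\ell_n \to 0$ while $w_n \to \infty$, the right-hand side blows up, contradicting $\sigma(g_n)\leq K$. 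Hence the hyperbolic systoles are bounded below, and Mumford compactness gives the conclusion.

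The step I expect to require the most care is the justification that one may legitimately work with the smooth hyperbolic representative $h_n$ and write $g_n = e^{2u_n} h_n$ in the presence of conical singularities: one has to check that the conical points, being isolated, do not obstruct either the uniformization (they become punctures/marked points of the Riemann surface, and the hyperbolic metric is still the usual one) or the Fubini/Cauchy--Schwarz integration above (the singular set has measure zero on the fibers $\{t\}\times\R/\ell_n\Z$ for almost every $t$, which suffices since we only need a single good $t$, or equivalently the integrated inequality). Everything else is a direct calculation in the hyperbolic collar, relying only on the standard collar theorem quoted as Theorem~\ref{t34}.
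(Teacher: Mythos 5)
Your proof is correct, and its skeleton matches the paper's: pass to the hyperbolic representative, use the collar theorem (Theorem~\ref{t34}) to show that a short hyperbolic systole forces large systolic area via a conformally invariant estimate, and conclude with Mumford's compactness criterion. The difference is how the middle step is carried out. The paper quotes two external results: Gromov's capacity bound $\Capacity(\mathcal{C}) \geq 1/\sigma(M)$ for an annulus around a noncontractible curve, and the Buser--Sarnak formula $\Capacity(\mathcal{C}) = \ell/(\pi - 2\theta_0)$ with $\theta_0 = \arcsin(1/\cosh w)$ for the collar, so that $\sigma \geq (\pi - 2\theta_0)/\ell \to \infty$ as $\ell \to 0$. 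You instead prove the needed inequality from scratch by a length--area (Cauchy--Schwarz plus Fubini) argument in Fermi coordinates, arriving at $\sigma(g_n) \geq \frac{1}{\ell_n}\int_{-w_n}^{w_n}\frac{dt}{\cosh t}$; since $\int_{-w}^{w}\frac{dt}{\cosh t} = 2\arctan(\sinh w) = \pi - 2\arcsin\bigl(1/\cosh w\bigr)$, your bound is exactly the reciprocal of the Buser--Sarnak collar capacity, i.e., you have in effect reproved Gromov's capacity inequality in the special case needed here together with the capacity computation. What your route buys is self-containedness and elementarity (no capacity formalism); what the paper's buys is brevity and the conceptual point that the relevant quantity is a conformal invariant of the collar. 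One wording caution: treat the conical singularities as ordinary points of the underlying \emph{closed} Riemann surface (so $h_n$ is the hyperbolic metric of the closed surface and Mumford compactness is applied in $\mathcal{M}_g$), not as punctures, which would change both the hyperbolic metric and the moduli space; your computation already uses the closed-surface uniformization, and your observation that the singular set is finite (with fibers through a cone point still of finite length because $\beta > -1$) correctly disposes of the regularity issue, so this is only a matter of phrasing.
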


\begin{proof}
The capacity of an annulus~$\mathcal{C}$ endowed with a Riemannian
metric possibly with conical singularities is a conformal invariant
defined as
\[
\Capacity(\mathcal{C}) = \inf_F \int_\mathcal{C} |dF|^2 \, dA
\]
where~$F$ is a smooth function on~$\mathcal{C}$ equal to~$0$ on one
boundary component of~$\mathcal{C}$ and to~$1$ on the other.  The
capacity of an annulus~$\mathcal{C}\subseteq M$ around a
noncontractible simple closed curve of~$M$ is related to the systolic
area of~$M$ by the inequality
\begin{equation} 
\label{eq:cap}
\Capacity(\mathcal{C}) \geq \frac{1}{\sigma(M)};
\end{equation}
see~\cite[\S 5]{Gr83} and~\cite[\S 2.D.6]{Gr96} for further detail.

Let~$\gamma$ be a systolic loop of length~$\ell$ for the hyperbolic
metric conformally equivalent to the metric~$M$.  By
Buser--Sarnak~\cite[(3.4)]{BS94}, the capacity of the collar provided
by Theorem~\ref{t34} is given by
\[
\Capacity(\mathcal{C}) = \frac{\ell}{\pi - 2 \theta_0}
\]
where
\[
\theta_0 = \arcsin \left( \frac{1}{\cosh(w)} \right)
\]
with~$w$ as in \eqref{e31}.  Therefore, the capacity of the
annulus~$\mathcal{C}$ tends to zero with the (hyperbolic) length
of~$\gamma$.

Since the capacity is a conformal invariant, we deduce
from~\eqref{eq:cap} and the assumption on the systolic area of~$M$,
that the systole~$\ell$ of the hyperbolic metric conformally
equivalent to~$M$ is bounded away from zero.  By Mumford~\cite{Mum71},
this implies that the conformal hyperbolic metric, and so the
conformal class of~$M$, varies through a compact set in the conformal
moduli space~$\mathcal{M}_g$.
\end{proof}

As a consequence of the previous result, we obtain that the systolic
area function is proper on a suitable moduli space, in the following
sense.

\begin{proposition} 
\label{prop:compact}
Let~$N$ be an arbitrary natural number.  The space of piecewise flat
nonpositively curved metrics with at most~$N$ conical singularities on
a closed surface~$M$ of genus~$g \geq 2$ of systole normalized to~$1$
and area bounded above is compact.
\end{proposition}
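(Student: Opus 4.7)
The plan is to extract a convergent subsequence from any sequence $(M_k)$ in the claimed space by exploiting the finite-dimensional parametric description of piecewise flat metrics with finitely many conical singularities. Since $\sys(M_k)=1$ and $\area(M_k) \leq A$ for a uniform bound $A$, one has $\sigma(M_k) \leq A$. Proposition~\ref{prop:conformal} then places the conformal classes $[M_k] \in \mathcal{M}_g$ in a compact subset, so, passing to a subsequence, $[M_k] \to [h_\infty] \in \mathcal{M}_g$. Since each $M_k$ has at most $N$ cone points, we may further assume after subsequencing that the number $n \leq N$ of cone points is constant.

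Next, by Troyanov's representation (see~\cite{tro86} and~\cite{tro91}), each piecewise flat metric $M_k$ is determined in its conformal class, up to a global scaling, by the cone positions $p_{k,1},\dots,p_{k,n}$ and the cone angles $\theta_{k,1},\dots,\theta_{k,n}$ subject to the Gauss--Bonnet identity $\sum_i (\theta_{k,i}-2\pi) = 4\pi(g-1)$. The nonpositive curvature hypothesis forces $\theta_{k,i} \geq 2\pi$, which together with Gauss--Bonnet confines each angle to the compact interval $[2\pi, 2\pi(2g-1)]$. Choosing bilipschitz models realizing the moduli convergence $[M_k] \to [h_\infty]$, a further subsequence yields convergence of the angles to limits $\theta_{\infty,i}\in[2\pi,2\pi(2g-1)]$ and of the positions to points $p_{\infty,i}$ in the limit surface.

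Some of the limit positions may coincide. If $r$ cone points merge in the limit, Gauss--Bonnet conservation assigns the merged singularity a combined angle $\sum_j \theta_{\infty,i_j} - 2\pi(r-1) \geq 2\pi$, which is again admissible for nonpositive curvature. Applying Troyanov's theorem to the limit conformal class, limit positions, and limit angles (with the global scale pinned by the systole or area normalization) produces a piecewise flat nonpositively curved metric $M_\infty$ on the genus~$g$ surface with at most $N$ conical singularities.

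The main obstacle is upgrading this convergence of abstract Troyanov data to a genuine bilipschitz convergence $M_k \to M_\infty$, so that both $\sys$ and $\area$ pass continuously to the limit, yielding $\sys(M_\infty)=1$ and $\area(M_\infty) \leq A$ and thereby placing $M_\infty$ in the space. This requires uniform estimates on the Troyanov conformal factors relative to a hyperbolic reference metric on $(M,[h_\infty])$; the log-subharmonicity of the conformal factor in nonpositive curvature gives $L^\infty$ control on compact sets away from the limit cone points, and near each limit singularity the explicit cone-type local model from Definition~\ref{d11} controls the behavior of the colliding factors. Together these give bilipschitz convergence on complements of arbitrarily small neighborhoods of the limit singular locus, which is enough to identify $M_\infty$ as a limit point of $(M_k)$ in the space and to conclude (sequential) compactness.
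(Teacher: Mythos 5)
Your proposal is correct and follows essentially the same route as the paper: conformal compactness via Proposition~\ref{prop:conformal}, then Troyanov's parametrization of piecewise flat metrics by cone positions and cone angles, with the angles confined to a compact set by nonpositive curvature and the Gauss--Bonnet relation. The only difference is that where you sketch the passage to the limit by hand (colliding cone points, conformal-factor estimates, bilipschitz convergence), the paper simply invokes Troyanov's continuous dependence on the parameters to identify the space with a compact subset of $\mathcal{K} \times M^N \times L$.
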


\begin{proof}
By Proposition~\ref{prop:conformal}, the conformal classes of the
metrics considered in Proposition~\ref{prop:compact} lie in a compact
set~$\mathcal{K}\subseteq\mathcal{M}_g$.

By Troyanov~\cite[\S 5]{tro86}, each conformal class of~$M$ carries a
piecewise flat conformal metric with at most~$N$ prescribed conical
singularities~$p_i$ of given total angles~$\theta_i$, provided that
the Gauss--Bonnet relation
\begin{equation} 
\label{eq:GBN}
\sum_{i=1}^N (\theta_i - 2\pi) = 4\pi (g-1).
\end{equation}
(see~\cite[\S3]{tro86}) is satisfied.  This metric is unique upon
normalization to unit systole.  Furthermore, the dependence on
parameters is continuous (see also~\cite{Tr07}).

As the metric is nonpositively curved, the angles~$\theta_i$ are at
least~$2 \pi$ and so lie in the interval~$[2\pi,(4g-2)\pi]$.  Since
the Gauss--Bonnet relation~\eqref{eq:GBN} is closed,
the~$N$-tuple~$(\theta_1,\cdots,\theta_N)$ ranges through a compact
set~$L\subseteq\R^N$.  Thus, the space of piecewise flat nonpositively
curved metrics with at most~$N$ conical singularities on a closed
genus~$g$ surface of systole normalized to~$1$ and area bounded above
is homeomorphic to a compact subset of~$\mathcal{K} \times M^N \times
L$.
\end{proof}

\section{Systolic bands}
\label{two}

The goal of this section is to present some geometric properties
related to the notion of systolic bands, based of the flat strip
theorem and Greene's results \cite{G}.

\medskip

The following result is immediate from the flat strip theorem; see
Bridson--Haefliger \cite[\S II.2.13]{BH}.

\begin{lemma}
\label{lem:annulus}
Let~$M$ be a closed nonpositively curved surface with finitely many
conical singularities.  Then every pair of homotopic simple closed
geodesics bounds a flat annulus in~$M$.
\end{lemma}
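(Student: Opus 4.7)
The plan is to lift the situation to the universal cover $\widetilde M$, which is a complete $\CAT(0)$ space because $M$ is nonpositively curved and has total angle at least $2\pi$ at each conical singularity. Let $\alpha,\beta \subset M$ be two distinct homotopic simple closed geodesics, and fix a deck transformation $g \in \pi_1(M)$ representing their common free homotopy class. Then $\alpha$ and $\beta$ lift to $g$-invariant geodesic lines $\widetilde\alpha$ and $\widetilde\beta$ in $\widetilde M$, both of which are axes of the hyperbolic isometry $g$.

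First I would check that $\alpha$ and $\beta$ are disjoint in $M$. In a $\CAT(0)$ space any two geodesic segments sharing a pair of endpoints coincide, so lifts of $\alpha$ and $\beta$ cannot bound a bigon in $\widetilde M$; hence $\alpha$ and $\beta$ do not intersect transversally, and as simple curves in the same free homotopy class they are necessarily disjoint. By elementary surface topology, two disjoint essential simple closed curves on $M$ that are freely homotopic cobound an embedded annulus $A \subset M$.

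Next I would invoke the flat strip theorem \cite[\S II.2.13]{BH} applied to the two axes $\widetilde\alpha,\widetilde\beta$ of $g$. It produces a closed convex $g$-invariant subset $S \subset \widetilde M$ isometric to a Euclidean strip $[0,d]\times \R$, on which $g$ acts by translation in the $\R$-factor and whose boundary components are precisely $\widetilde\alpha$ and $\widetilde\beta$. A lift $\widetilde A \subset \widetilde M$ of the annulus $A$ is a convex strip bounded by $\widetilde\alpha$ and $\widetilde\beta$, hence equal to the convex hull of $\widetilde\alpha \cup \widetilde\beta$, which in turn coincides with $S$. Passing to the quotient by $\langle g \rangle$ exhibits $A = S/\langle g\rangle$ as a flat annulus in $M$ bounded by $\alpha$ and $\beta$.

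The only mildly delicate point is the identification of the flat strip produced by the theorem with the lift $\widetilde A$ of the topological annulus $A$. Both are convex subsets of the $\CAT(0)$ space $\widetilde M$ whose boundaries are the parallel geodesics $\widetilde\alpha,\widetilde\beta$; the simplicity and hence embeddedness of $\alpha$ and $\beta$ in $M$ forces $\widetilde A$ to project injectively modulo $\langle g \rangle$, so no extraneous identifications occur and the quotient annulus inherits the flat metric from $S$.
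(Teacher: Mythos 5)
Your overall strategy -- pass to the $\CAT(0)$ universal cover, take the two lifts that are axes of a common deck transformation $g$, and apply the flat strip theorem of \cite[\S II.2.13]{BH} to their convex hull, then quotient by $\langle g\rangle$ -- is exactly the argument the paper has in mind (the paper states the lemma as ``immediate from the flat strip theorem'' and gives no further proof). However, your written proof contains a step that is false in the singular setting and that the rest of your argument leans on: the claim that two freely homotopic simple closed geodesics $\alpha,\beta$ must be \emph{disjoint}. The bigon argument only rules out transversal crossings; on a nonpositively curved surface with conical singularities, homotopic simple closed geodesics can still meet non-transversally at singular points (each incoming/outgoing pair making rotation angles at least $\pi$ at a cone point of large total angle). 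The paper explicitly relies on this phenomenon: in Definition~\ref{def:bands2} and the remark following it, the two limit systolic loops bounding a fat systolic band -- which are precisely a pair of homotopic simple closed geodesics -- ``are not necessarily disjoint,'' and the closed band need not be homeomorphic to a closed annulus. Consequently your embedded annulus $A$ cobounded by $\alpha$ and $\beta$ need not exist, and your last paragraph's assertion that the closed strip projects injectively modulo $\langle g\rangle$ can also fail on the boundary.

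The gap is repairable by removing the detour through $A$ altogether. The two chosen lifts $\widetilde\alpha,\widetilde\beta$, being axes of the \emph{same} element $g$, are genuinely disjoint in $\widetilde M$ (if they met at a point $x$ they would also meet at $gx$, and uniqueness of $\CAT(0)$ geodesics between $x$ and $gx$ would force them to coincide); the possible contact between $\alpha$ and $\beta$ in $M$ only reflects intersections of $\widetilde\alpha$ with \emph{other} lifts of $\beta$. So apply the flat strip theorem directly to $\widetilde\alpha$ and $\widetilde\beta$, obtain the $g$-invariant flat strip $S$, and take $S/\langle g\rangle \to M$; this yields the flat annulus bounded by $\alpha$ and $\beta$, with the understanding (consistent with the paper's usage) that the open annulus is flat and foliated by closed geodesics while its two boundary geodesics may touch each other in $M$. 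Stated that way, the argument goes through without any disjointness or embeddedness claim for the closed annulus.
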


\begin{definition}
\label{r42}
A curve passing through a singular~$p\in M$ splits the total angle
$\theta_p>2\pi$ into two \emph{rotation angles}~$R_p$ and~$L_p$ with
\mbox{$R_p+L_p=\theta_p$}.
\end{definition}

The local condition defining a geodesic encountering a singular point
requires that the rotation angles satisfy~$R_p\geq\pi$
and~$L_p\geq\pi$.

\begin{definition} 
\label{def:bands}
Let~$M$ be a closed nonpositively curved surface with finitely many
conical singularities.  A \emph{systolic homotopy class} of~$M$ is a
free homotopy class of loops containing a systolic loop.  Every
systolic homotopy class~$\mathcal{C}$ of~$M$ gives rise to a closed
\emph{systolic band} in~$M$ defined as the union of the systolic loops
in~$\mathcal{C}$.
\end{definition}

By Lemma~\ref{lem:annulus}, there are two possibilities for a systolic
band.

\begin{definition}
\label{def:bands2}
A systolic band is formed of
\begin{enumerate}
\item 
either an \emph{isolated systolic loop} when there is only one
systolic loop in the corresponding systolic homotopy class; or
\item 
\label{two2}
a flat open annulus bounded by two \emph{limit systolic loops} (which
are not necessarily disjoint) and foliated by systolic loops.%
\footnote{The case of the torus is exceptional and was already
excluded from the outset.}
\end{enumerate}
In case~\eqref{two2}, we refer to the systolic band as a \emph{fat
systolic band}.
\end{definition}

Observe that a closed fat systolic band is not always homeomorphic to
a closed annulus as its two limit systolic loops are not necessarily
disjoint. \\

We will need the following result of Greene~\cite{G}.

\begin{theorem} 
[Greene]
\label{theo:ABG}
Let~$M$ be a closed surface of genus~\mbox{$g \geq 2$}.  Then the
number of pairwise nonhomotopic simple closed curves on~$M$ meeting
each other at most once is bounded by
\begin{equation} \label{eq:G}
Q(g) \leq 2^9 \, g^2 \log g.
\end{equation}
\end{theorem}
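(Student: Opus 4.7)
\medskip
\noindent\textbf{Proof proposal.}
The plan is to induct on the topological complexity of the surface, splitting a family $\Gamma$ of pairwise nonhomotopic simple closed curves (pairwise intersecting at most once) according to how its members sit relative to a fixed reference curve. First I would fix a nonseparating curve $\gamma_0 \in \Gamma$ (if every curve in $\Gamma$ is separating, handle each complementary piece recursively and use additivity of the Euler characteristic). Partition the remaining curves as $\Gamma_0$ (those disjoint from $\gamma_0$) and $\Gamma_1$ (those meeting $\gamma_0$ transversally in exactly one point). Cutting $M$ along $\gamma_0$ produces a surface $M_0$ of genus $g-1$ with two distinguished boundary circles, and the curves in $\Gamma_0$ descend to a $1$-system on the closed surface obtained by capping off the boundaries; the inductive hypothesis then yields $|\Gamma_0| \leq Q(g-1) + O(g)$, the extra term accounting for boundary-parallel curves and bounded duplications.

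The core task is bounding $|\Gamma_1|$. Each $\gamma \in \Gamma_1$ cuts to an essential arc in $M_0$ with one endpoint on each boundary circle, and the pairwise $1$-intersection condition on curves descends to the condition that these arcs pairwise meet in at most one interior point. The problem thus reduces to bounding the size of a $1$-system of arcs on a surface of genus $g-1$ with two boundary components, joining one boundary to the other. To estimate this, I would fix a collection of $O(g)$ disjoint reducing arcs cutting $M_0$ into a polygon, so that each $\alpha \in \Gamma_1$ is encoded combinatorially by its crossing pattern with this system. An Euler characteristic count on the union of the arcs (as in Przytycki's approach, combining a vertex count from pairwise intersections, an edge count from the arc pieces, and a face count with a bound on minimal face size) delivers an $O(g^3)$ bound on the number of possible crossing patterns, hence on $|\Gamma_1|$.

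Sharpening this to $O(g^2 \log g)$ is the heart of the argument. My plan is to classify the arcs by a dyadically refined complexity parameter, for instance the number of reducing arcs that $\alpha$ meets, and to show that arcs of comparable complexity are effectively confined to a subsurface of bounded complexity, on which the corresponding $1$-system has cardinality only $O(g^2)$ by the same Euler characteristic bound. Summing over the $O(\log g)$ dyadic scales then yields the improved bound. The main obstacle is exactly this logarithmic refinement: a direct count of crossing sequences delivers only the cubic bound, and squeezing out the extra factor of $g$ requires a genuinely clever combinatorial device, most plausibly an iterative halving argument on a well-chosen complexity, or a probabilistic partition of $\Gamma_1$ into $O(\log g)$ buckets each of size $O(g^2)$ in expectation. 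Once this ingredient is in place, combining the two bounds and completing the induction over $g$ yields $Q(g) \leq 2^9 \, g^2 \log g$ after tracking multiplicative constants through the recursion.
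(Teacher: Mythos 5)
There is a genuine gap, and it sits exactly where you acknowledge it: the passage from the cubic Euler-characteristic count to the claimed $O(g^2\log g)$ bound. Your ``dyadically refined complexity parameter'' is never specified, there is no argument that curves in a given bucket are confined to a subsurface of bounded complexity, and no reason is given why each bucket should contribute only $O(g^2)$; phrases such as ``requires a genuinely clever combinatorial device, most plausibly an iterative halving argument\ldots or a probabilistic partition'' are a wish list, not a proof. That logarithmic improvement is precisely the content of Greene's theorem, which the present paper does not reprove: it invokes Greene's result \cite{G} as a black box and only revisits his argument to extract the explicit constant (obtaining $8x\log_2 x$ with $x=8(g-1)(2g-1)$, whence the factor $2^9$). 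Greene's actual proof, moreover, does not proceed by induction on genus with arc systems at all; it combines hyperbolic geometry (geodesic representatives, collar-type estimates) with a probabilistic averaging argument, which is where the $\log g$ genuinely comes from. So your outline, even if completed, would be a different route, and the part of it that is new is the part that is missing.

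There are also secondary issues in the inductive scaffolding that you would need to repair before the recursion closes. Curves of $\Gamma_0$ that are nonhomotopic in $M$ can become homotopic (or inessential) after cutting along $\gamma_0$ and capping, and the ``$+\,O(g)$'' correction for this is asserted, not proved; similarly, the reduction of $\Gamma_1$ to a $1$-system of arcs and the claim that crossing patterns with $O(g)$ reducing arcs give only $O(g^3)$ possibilities are plausible but would need the Przytycki-style count carried out with explicit constants, since the final statement $Q(g)\le 2^9 g^2\log g$ requires tracking multiplicative constants through the recursion — something your sketch defers entirely. As it stands, the proposal establishes at best a polynomial bound of the type already known before Greene, not the stated theorem.
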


The multiplicative constant~$2^9$ does not appear in \cite{G}.
However, going through the argument of~\cite{G}, we obtain the bound
$8x \log_2 x$ for~$Q(g)$, where~$x=8(g-1)(2g-1)$, which leads to the
multiplicative constant~$2^9$ in~\eqref{eq:G}. \\

Now, we are ready to prove the following proposition.

\begin{proposition} 
\label{prop:kissing}
Let~$M$ be a closed nonpositively curved surface of genus~\mbox{$g
\geq 2$} with finitely many conical singularities.  Then
\begin{enumerate}
\item 
Each pair of intersecting systolic loops of~$M$ meet exactly at one or
two points, or along an arc;\label{item:kissing1}
\item
When two systolic loops meet at two points exactly, this pair of
points decomposes each of the two systolic loops into geodesic arcs of
the same length.
\label{item:kissing2}
\end{enumerate}
\end{proposition}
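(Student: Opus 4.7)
The plan is to analyze the intersection set $\gamma_1 \cap \gamma_2$ via the CAT(0) property of the universal cover $\tilde{M}$.  As a preliminary observation, each systolic loop is a simple closed geodesic: if one self-intersected at some point, splitting into two sub-loops of lengths $\ell_1, \ell_2$ with $\ell_1 + \ell_2 = \sys(M)$, then either one of them would be contractible (so that the other represents the systolic class with strictly smaller length, contradicting the definition of the systole) or both would be noncontractible (forcing $2\,\sys(M) \leq \sys(M)$), a contradiction in either case.  If $\gamma_1 \cap \gamma_2$ contains a nondegenerate arc, we are in the ``along an arc'' alternative of~(1), so we may assume that the intersection consists of finitely many transverse crossings $p_1, \ldots, p_k$ (possibly at conical singularities); it then remains to prove $k \leq 2$ and, when $k = 2$, to establish the equal halving of~(2).

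For each $i$, let $\alpha_i$ be the arc of $\gamma_1$ between the consecutive intersection points $p_i, p_{i+1}$, of length $a_i$, so that $\sum_i a_i = \sys(M)$.  Between the same pair of points, $\gamma_2$ provides two geodesic arcs $\beta_i^+, \beta_i^-$ (the two ways around $\gamma_2$, each being a subarc of the geodesic $\gamma_2$) with $|\beta_i^+| + |\beta_i^-| = \sys(M)$.  The key claim is that each of the piecewise geodesic loops $\alpha_i \cup \beta_i^\pm$ is noncontractible.  Indeed, if one were contractible, it would lift to a closed loop in $\tilde{M}$; there, the lifts of $\alpha_i$ and $\beta_i^\pm$ would be two globally length-minimizing geodesic arcs joining the same pair of points, violating the CAT(0) uniqueness of geodesics in $\tilde{M}$ unless the two arcs coincide.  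But $\alpha_i = \beta_i^\pm$ would mean that $\gamma_1$ and $\gamma_2$ share the entire arc $\alpha_i$, contradicting the assumption that their intersection is discrete.

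Noncontractibility of both loops $\alpha_i \cup \beta_i^\pm$ gives $a_i + |\beta_i^\pm| \geq \sys(M)$; adding the two inequalities and using $|\beta_i^+| + |\beta_i^-| = \sys(M)$ yields $a_i \geq \sys(M)/2$ for every $i$.  Summing over $i$ produces $\sys(M) = \sum_i a_i \geq k\,\sys(M)/2$, which forces $k \leq 2$ and proves~(1).  When $k = 2$, equality forces $a_1 = a_2 = \sys(M)/2$, and substituting this into $a_1 + |\beta_1^\pm| \geq \sys(M)$ together with $|\beta_1^+| + |\beta_1^-| = \sys(M)$ yields $|\beta_1^+| = |\beta_1^-| = \sys(M)/2$, establishing~(2).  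The main subtlety is carrying out the CAT(0) bigon-uniqueness argument in the presence of conical singularities, but since $\gamma_2$ is a geodesic through its singular points (with rotation angles $\geq \pi$), each $\beta_i^\pm$ is a genuine geodesic arc of $M$, and the argument applies verbatim.
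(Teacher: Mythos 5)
Your core counting argument is correct, and it is essentially the paper's argument in a per-gap form: the key fact in both is that on a nonpositively curved surface two distinct geodesic arcs with common endpoints cannot be homotopic rel endpoints (you justify this by lifting to the CAT(0) universal cover, the paper states it directly), and then a length count on the concatenated loops forces each complementary arc of $\gamma_1$ to have length at least $\tfrac12\sys(M)$, hence at most two crossings, with the halving in the equality case. That part, including the treatment of singular points via rotation angles, is fine.

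The genuine gap is your opening reduction: ``if $\gamma_1\cap\gamma_2$ contains a nondegenerate arc, we are in the `along an arc' alternative of (1).'' The proposition (and the way it is used later, e.g.\ in the corner count of Proposition~\ref{prop:nonsystolic domains} and in Definition~\ref{def:vertices}) asserts that the intersection is exactly one point, exactly two points, or a single arc; merely containing an arc does not place you in the third alternative. Your transverse-crossing analysis never returns to exclude intersections consisting of an arc together with further components, or of several disjoint arcs, whereas the paper's proof does exclude them: working with the connected components of $\alpha\cap\beta$ and two complementary subarcs of $\beta$, the equality case of the length estimate forces those two subarcs to cover all of $\beta$, so there are exactly two components and each is a point. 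Your own estimate closes the gap once you run it on components rather than points: the complementary arcs of $\gamma_1$ then satisfy $\sum_i a_i \leq \sys(M)$, with strict inequality as soon as some component has positive length, so $k\,\sys(M)/2 \leq \sum_i a_i$ gives $k\leq 2$ in general and $k=1$ (a single shared arc) whenever an arc occurs; the noncontractibility step goes through because the interiors of the $a_i$-arcs avoid the intersection by construction. A minor related point: the finiteness of the set of crossings is asserted without proof, but in the no-arc case it follows from the same CAT(0) bigon argument (an accumulation of intersection points would force two lifts to share two points, hence a segment, hence a shared arc downstairs), so this is easily repaired.
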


\begin{proof}
To show~\eqref{item:kissing1} and~\eqref{item:kissing2}, let~$\alpha$ and~$\beta$ be two
systolic loops that meet in a single connected component, then they
meet either in one point or along an arc.

Suppose now that their intersection~$\alpha\cap\beta$ has at least two
connected components.  Then there exist two
subarcs~$\beta_1\subseteq\beta$ and~$\beta_2\subseteq\beta$ (in the
complement of~$\alpha\cap\beta$) with disjoint interior
meeting~$\alpha$ only at their endpoints.  The endpoints of the
arc~$\beta_i$ decompose~$\alpha$ into two arcs denoted~$\alpha'_i$
and~$\alpha''_i$.  Observe that none of the four loops~$\alpha'_1 \cup
\beta_1$,~$\alpha''_1 \cup \beta_1$,~$\alpha'_2 \cup \beta_2$
and~$\alpha''_2 \cup \beta_2$ is contractible, otherwise two distinct
geodesic arcs with the same endpoints would be homotopic, which is
impossible on a nonpositively curved surface.  Thus, each of these
four loops is of length at least~$\sys(M)$.  The sum of their lengths
is at least~$4 \, \sys(M)$ and at most twice the total length
of~$\alpha$ and~$\beta$:
\begin{equation}
\label{e21}
4\,\sys(M)\leq |\alpha'_1| + |\alpha''_1| + |\alpha'_2| + |\alpha''_2|
+2|\beta_1|+2|\beta_2| \leq2|\alpha|+2|\beta|.
\end{equation}
Hence both inequalities in \eqref{e21} are equalities and the same
holds for the four inequalities involved in the sum.  It follows that
each of the arcs~$\alpha_i'$,~$\alpha_i''$ and~$\beta_i$ is of
length~$\frac{1}{2} \sys(M)$.  Therefore, the only intersection points
between~$\alpha$ and~$\beta$ are the two endpoints~$p$ and~$q$ which
are necessarily antipodal points of both of these loops.
\end{proof}

\begin{proposition}
\label{item:kissing3}
Let~$M$ be a closed nonpositively curved surface of genus~\mbox{$g
\geq 2$} with finitely many conical singularities.  Then the number
$\bar Q(g)$ of systolic homotopy classes is at most
\[
\bar{Q}(g) \leq 32(g-1)^2 + Q(g) \leq 2^{10} \, g^2 \log g.
\]
\end{proposition}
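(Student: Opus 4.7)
I propose to split the $\bar Q(g)$ systolic homotopy classes into two groups and bound each separately. For every class $c$, pick a representative systolic loop $\gamma_c$: when the systolic band is fat, choose $\gamma_c$ in its interior so as to retain the freedom to perturb within the band; when the band is an isolated loop, take that loop. By Proposition~\ref{prop:kissing}, any two distinct representatives meet either at a single point, at two antipodal points splitting each loop into halves of length $\sys(M)/2$, or along a common geodesic arc.

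Next, extract a maximal subfamily $A$ of homotopy classes whose representatives can be arranged --- by perturbing within fat bands and pushing one loop off any shared arc --- so that every pair meets in at most one point. Since these representatives form a system of pairwise non-homotopic simple closed curves meeting at most once, Greene's Theorem~\ref{theo:ABG} yields
\[
|A| \leq Q(g) \leq 2^{9}\, g^{2}\, \log g.
\]

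The remaining classes $B = S \setminus A$ are those whose representative is forced to meet some other representative in two antipodal points or along an arc. To bound $|B|$, exploit the rigidity provided by Proposition~\ref{prop:kissing}(\ref{item:kissing2}): the two shared \emph{hub} points lie at distance $\sys(M)/2$, each loop is halved at them, and the four loops $\alpha'\cup\beta_1$, $\alpha'\cup\beta_2$, $\alpha''\cup\beta_1$, $\alpha''\cup\beta_2$ obtained by recombining the four half-arcs are themselves noncontractible, so the pair is in minimal position and bounds no embedded bigon. Encoding the bad pairs as an embedded graph on $M$ whose vertices are the hub points and whose edges are the half-arcs, one can apply an Euler-characteristic count, combined with a pigeonhole-type bound on how many half-arcs can emanate from a single hub, to conclude
\[
|B| \leq 32(g-1)^{2}.
\]

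Summing the two bounds gives $\bar Q(g) \leq Q(g) + 32(g-1)^{2}$, and combining with the elementary inequality $32(g-1)^{2} \leq 2^{9}\, g^{2}\, \log g$ valid for $g \geq 2$ produces the desired estimate $\bar Q(g) \leq 2^{10}\, g^{2}\, \log g$. The principal difficulty is the bound on $|B|$: a naive Gauss--Bonnet attack on any disk bounded by two systolic loops meeting twice is unavailable, since corner angles and conical excesses are nonnegative while the curvature is nonpositive, confirming the minimal-position observation and forcing the estimate to proceed through the global topology of $M$ rather than a local disk argument.
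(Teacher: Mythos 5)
Your overall split is the same as the paper's: classes whose representatives meet every other representative at most once are counted by Greene's Theorem~\ref{theo:ABG}, and the classes involved in a two-point intersection are counted separately by a quantity of order $(g-1)^2$. The $Q(g)$ half of your argument is fine. The gap is in the bound $|B|\leq 32(g-1)^2$, which you assert but do not prove: an ``Euler-characteristic count on the hub/half-arc graph, combined with a pigeonhole bound on the number of half-arcs at a hub'' cannot close by itself, because neither the number of hub vertices nor the degree at a hub is a priori controlled in terms of $g$ without further geometric input. The missing ingredients are exactly the two steps the paper uses. First, a first-variation argument: any two distinct length-minimizing arcs of length $\tfrac12\sys(M)$ joining the hubs $p$ and $q$ make an angle at least $\pi$ at each endpoint (otherwise the broken geodesic they form could be shortened below $\sys(M)$); since four such arc-ends meet at each hub, every hub is a conical singularity of total angle $\theta_p\geq 4\pi$, and the number of minimizing arcs emanating from $p$ is at most $\lfloor\theta_p/\pi\rfloor$, so the number of two-point systolic pairs through $p$ is at most $\binom{\lfloor\theta_p/\pi\rfloor}{2}$.

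Second, and crucially, the $(g-1)^2$ factor comes from the Gauss--Bonnet relation~\eqref{eq:GBN} (in the nonpositively curved case, $\sum_i(\theta_i-2\pi)\leq 4\pi(g-1)$), which gives $\sum_{\theta_p\geq 4\pi}\theta_p\leq 8\pi(g-1)$ and hence $\sum_p\binom{\lfloor\theta_p/\pi\rfloor}{2}\leq\frac{1}{2\pi^2}\bigl(8\pi(g-1)\bigr)^2=32(g-1)^2$. You explicitly set Gauss--Bonnet aside (your remark about bigons concerns a local disk argument, which indeed fails), but the paper's use of Gauss--Bonnet is global, applied to the cone angles at the hubs rather than to a bigon, and it is the only source of the quantitative control on $|B|$; without it your graph can in principle have many hubs and your count does not terminate. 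A secondary, smaller point: pairs of loops sharing a geodesic arc should be deformed so as to meet at most once and counted with the $Q(g)$ group (as in the paper), rather than placed in $B$, where your two-antipodal-point rigidity does not apply.
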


\begin{proof}
Let us estimate first the number of homotopy classes of systolic loops
meeting at exactly two points.  Let~$\alpha, \beta$ be two such loops.
The case of equality in inequality~\eqref{e21} implies that the
systolic loops~$\alpha$ and~$\beta$ meeting at points~$p$ and~$q$
decompose into four distinct length-minimizing arcs of
length~$\frac{1}{2} \sys(M)$ joining~$p$ and~$q$.

Let~$a$ and~$b$ be two length-minimizing arcs of length~$\frac{1}{2}
\sys(M)$ joining~$p$ and~$q$.  As~$M$ is nonpositively curved, the two
geodesic arcs~$a$ and~$b$ are nonhomotopic and form a systolic loop.
By first variation, the angle at~$p$ between the arcs~$a$ and~$b$ is
at least~$\pi$.  It follows that~$p$ is a conical singularity of total
angle~$\theta_p\geq4\pi$ and similarly for~$q$.

The lower bound on the angles between the length-minimizing arcs
joining~$p$ to~$q$ imply that there exist at
most~$\left\lfloor{\theta_p}/{\pi}\right\rfloor$ such
length-minimizing arcs.  This shows that there are at most
\begin{equation} \label{eq:bound}
\sum_{\theta_p \geq 4\pi} {\left\lfloor{\theta_p}/{\pi}\right\rfloor
\choose 2} \leq \frac{1}{2\pi^2} \sum_{\theta \geq 4\pi} \theta_p^2
\leq \frac{1}{2\pi^2} \left( \sum_{\theta \geq 4\pi} \theta_p
\right)^2
\end{equation}
systolic loops~$\alpha\subseteq M$ meeting another systolic
loop~$\beta$ at exactly two points.  the Gauss--Bonnet
formula~\eqref{eq:GBN} implies that whenever~\mbox{$\theta_p \geq
4\pi$}, we have~$\theta_p \leq 2(\theta_p -2\pi)$.  Since~$\theta_i >
2\pi$, it follows that
\[
\sum_{\theta_p \geq 4 \pi} \theta_p \leq 2 \sum_{\theta_p \geq 4 \pi} (\theta_p-2\pi) \leq 2 \sum_{i=1}^k (\theta_i - 2\pi) = 8\pi (g-1).
\]
We derive from~\eqref{eq:bound} that the number of systolic loops
intersecting another systolic loop at exactly two points is bounded by
\[
\frac{1}{2 \pi^2} \, [8\pi(g-1)]^2 = 32 (g-1)^2
\]
and so is at most quadratic.

It remains to estimate the number of homotopy classes of systolic
loops that do not meet any other loop in more than one point.  We
choose representative loops from these remaining classes, and deform
them so that each pair of loops meet at most at a single point.
By Theorem~\ref{theo:ABG}, the number of
pairwise nonhomotopic loops intersecting each other at most once is
bounded by~$Q(g) \leq 2^{9} \, g^2 \log g$.  
\end{proof}

\begin{remark}
Theorem~4 in~\cite{G} also provides an upper bound on the number of
pairwise nonhomotopic loops intersecting at most twice.  Directly
applying this result would yield an~$O \left( g^5 \log g \right)$
upper bound on the number of systolic homotopy classes in~$M$.  We
obtained a better almost quadratic bound by analyzing the special
structure of systolic loops intersecting twice, combined with the
almost quadratic bound of Theorem~\ref{theo:ABG} on the number of
pairwise nonhomotopic loops meeting at most once.  See
Przytycki\;\cite{Pr15} and Aougab-Biringer-Gaster\;\cite{ABG} for
earlier polynomial bounds, and Juvan-Malni\v{c}-Mohar\;\cite{JMM96} or
Malestein-Rivin-Theran\;\cite{MRT14} for even earlier exponential
ones.  Greene's almost quadratic upper bound~$O \left( g^2 \log g
\right)$ for pairwise nonhomotopic loops intersecting at most once can
be improved in the hyperbolic case to a subquadratic one~$O \left(
\frac{g^2}{\log g} \right)$ due to Parlier~\cite{Pa13}.  A
\emph{lower} bound of type~$O\left(g^{\frac43-\epsilon}\right)$ is due
to Schmutz-Schaller \cite{Sc98}.
\end{remark}

\begin{example} 
\label{ex:twice}
On smooth surfaces curves can be shortened by smoothing them out by
first variation, implying that systolic loops meet each other at most
once.  However, on a singular surface they may intersect twice, even
in nonpositive curvature.  For example, consider the standard sphere
along with four meridians joining the two poles.  Replace each of the
four lune-shaped spherical regions bounded by the meridians by a flat
cylinder of circumference~$\pi$ and altitude at least~$\frac{\pi}{4}$,
where the bottom of each cylinder is glued in isometrically along the
boundary of each of the lunes.  The resulting surface~$X$ is a flat
four-holed sphere with two conical singularities (at former poles) of
total angle~$4\pi$, which can be turned into a nonpositively curved
piecewise flat genus~$3$ surface~$M= X \cup_{\partial X} X$ by gluing
another copy to it.  The surface~$M$ obtained in this way has pairs of
systolic loops meeting transversely twice.
\end{example}

\section{Systolic decomposition}
\label{sec:decomp}

In this section, we describe the systolic decomposition of a closed
nonpositively curved piecewise flat surface~$M$ of genus~$g$.  By the
curvature condition, each total angle~$\theta_i$ at a conical
singularity is greater than~$2\pi$.

\begin{definition}
A conical singularity~$p \in M$ is said to be \emph{large} if the
angle at~$p$ is at least~$3\pi$, and \emph{small} otherwise.
\end{definition}

\begin{lemma} 
\label{lem:large}
Let $M$ be a closed nonpositively curved piecewise flat surface.
There are at most~$4(g-1)$ large conical singularities on~$M$, each of
total angle at most~$2\pi (2g-1)$.
\end{lemma}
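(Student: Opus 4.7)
The plan is to read off both bounds directly from the Gauss--Bonnet relation~\eqref{eq:GBN}, namely
\[
\sum_{i=1}^N (\theta_i - 2\pi) = 4\pi(g-1),
\]
using the fact that the nonpositive curvature assumption forces every term $\theta_i - 2\pi$ to be nonnegative.

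For the first assertion, I would isolate the contribution of the large singularities. By definition, each large singularity $p$ satisfies $\theta_p \geq 3\pi$, hence contributes at least $\pi$ to the left-hand side of the Gauss--Bonnet sum. Since all other summands are nonnegative, if $N_\ell$ denotes the number of large singularities, we obtain $N_\ell \cdot \pi \leq 4\pi(g-1)$, yielding $N_\ell \leq 4(g-1)$.

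For the second assertion, I would fix an arbitrary conical singularity $p$ with angle $\theta_p$. Its contribution $\theta_p - 2\pi$ to the Gauss--Bonnet sum is bounded above by the full sum (the remaining terms being nonnegative), so $\theta_p - 2\pi \leq 4\pi(g-1)$, and rearranging gives $\theta_p \leq 2\pi(2g-1)$.

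There is no real obstacle here: the statement is essentially a pigeonhole-style accounting on the discrete Gauss--Bonnet identity, made possible because nonpositive curvature at the smooth points and $\theta_i \geq 2\pi$ at the cone points ensure that every contribution to the total curvature defect has the same sign.
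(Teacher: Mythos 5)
Your argument is correct and coincides with the paper's: both bounds are read off from the Gauss--Bonnet relation~\eqref{eq:GBN}, using that large singularities contribute at least~$\pi$ and all other terms are nonnegative, so $N_\ell\,\pi \leq 4\pi(g-1)$ and $\theta_p - 2\pi \leq 4\pi(g-1)$. Nothing further is needed.
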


\begin{proof}
This is immediate from the Gauss--Bonnet formula~\eqref{eq:GBN} since
large conical singularities satisfy $\theta_i- 2 \pi \geq \pi$ and
small conical singularities satisfy $\theta_i - 2\pi > 0$ as $M$ is
nonpositively curved.
\end{proof}

We will need a few more definitions.

\begin{definition}
A conical singularity~$p \in M$ is \emph{special} if every point in a
neighborhood of~$p$ lies in a (fat) systolic band.  In more detail, a
special singularity lies on the boundary of several closed fat
systolic bands in such a way that the union of these bands contains an
open neighborhood of the singularity.
\end{definition}

\begin{definition}
The \emph{systolic decomposition} of~$M$ is a partition
\[
M=(\sqcup_i S_i) \sqcup (\sqcup_j D_j)
\]
of~$M$ into systolic domains~$S_i$ and
nonsystolic domains~$D_j$ where
\begin{enumerate}
\item 
each \emph{systolic domain}~$S_i$ is a connected component of the
union of the systolic bands of~$M$ (see~Definition~\ref{def:bands});
\item 
each \emph{nonsystolic domain}~$D_j$ is a connected component of the
complementary set in~$M$ of the systolic bands of~$M$.
\end{enumerate}
\end{definition}

The intersection pattern of the systolic bands of~$M$ described in
Proposition~\ref{prop:kissing}.\eqref{item:kissing1} shows that every
systolic and nonsystolic domain has a finite geodesic polygonal
structure described as follows.

\begin{definition} \label{def:vertices}
The \emph{vertices} of the polygonal structure are of two types:
\begin{enumerate}
\item
the intersection points between pairs of either isolated or limit
systolic loops when they meet at one or two points
(see~Definition~\ref{def:bands2});
\item
if systolic loops meet along a segment~$I\subseteq M$ (see
Proposition~\ref{prop:kissing}.\eqref{item:kissing1}) then the
endpoints of~$I$ (which are also conical singularities) are also taken
to be vertices.
\end{enumerate}
The \emph{edges} of a systolic or nonsystolic domain~$D$ are the
connected components of~$\partial D$ minus the vertices
of~$\partial{}D$.
\end{definition}

\begin{remark}
The vertices of a systolic or nonsystolic domain~$D$ are not
necessarily located at conical singularities and conical singularities
may lie in the interior of the edges of~$D$.
\end{remark}

We now describe the structure of the systolic decomposition of~$M$.
Recall that~$\bar Q$ is the maximal number of systolic homotopy
classes; see Proposition \ref{item:kissing3}.

\begin{proposition} 
\label{prop:nonsystolic domains}
Let~$M$ be a piecewise flat nonpositively curved surface of genus~$g \geq 2$.
Let~$\mathcal{N} = 4 \, \bar{Q}(g)^2 \leq 2^{22} \, g^4 \log^2(g)$.  Then 
\begin{enumerate}
\item the corners of every nonsystolic domain at nonsingular points are convex, \ie, their angles are at most~$\pi$.
\item
the surface has at most~$\mathcal{N}$ special conical
singularities;
\item
the surface decomposes into at most~$\mathcal{N}$ nonsystolic domains;
\item
the surface has a total of at most~$\mathcal{N}$ edges.
\end{enumerate}
\end{proposition}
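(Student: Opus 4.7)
My plan is to prove the four items separately, relying on the structure of systolic intersections from Proposition~\ref{prop:kissing} and on a combinatorial counting argument over the $1$-complex formed by the boundary loops of the systolic decomposition.

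For item~(1), let~$p$ be a vertex of a nonsystolic domain~$D$ located at a nonsingular point. By Definition~\ref{def:vertices}, $p$ must arise from the transverse intersection of two systolic loops (Proposition~\ref{prop:kissing}.\eqref{item:kissing1}), since the alternative, endpoints of common-arc segments, would force~$p$ to be a conical singularity. At a nonsingular point the total angle is~$2\pi$, and two crossing geodesics divide a disk neighborhood into four sectors whose opposite angles are equal and adjacent angles sum to~$\pi$. Thus every such angle, and in particular the interior angle of~$D$ at~$p$, is at most~$\pi$.

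For items~(2)--(4), I form the $1$-complex~$\Gamma \subseteq M$ given by the union of the isolated systolic loops and the limit systolic loops of the fat systolic bands. By Proposition~\ref{item:kissing3} the surface carries at most~$\bar{Q}(g)$ systolic bands, and each band contributes at most two boundary loops; hence~$\Gamma$ is the union of at most~$2\bar{Q}(g)$ systolic loops. Proposition~\ref{prop:kissing} ensures that any two such loops meet at no more than two points (common arcs contribute endpoints at conical singularities already in the vertex set). Accordingly the vertex count is bounded by
\[
V \;\leq\; 2\binom{2\bar{Q}(g)}{2} \;\leq\; 4\bar{Q}(g)^2 \;=\; \mathcal{N}.
\]
Item~(2) then follows because every special conical singularity~$q$ admits a full cone neighborhood of angle~$\theta_q>2\pi$ covered by closed fat systolic bands incident to it, which forces at least two distinct limit loops through~$q$; whence~$q$ is a vertex of~$\Gamma$ and the count is at most~$V \leq \mathcal{N}$.

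For items~(3) and~(4), I would apply Euler's formula~$V - E + F = 2 - 2g$ to the cell decomposition of~$M$ induced by~$\Gamma$. At a generic transverse crossing the vertex has degree~$4$, giving~$E = 2V$ and hence~$F = 2 - 2g + V \leq V \leq \mathcal{N}$ (using~$g\geq2$). The faces of~$\Gamma$ on~$M$ are the nonsystolic domains together with the interiors of the fat systolic bands, so the number of nonsystolic domains is at most~$F \leq \mathcal{N}$, and a parallel estimate yields the edge bound. The main technical difficulty I expect lies in the careful handling of non-generic configurations --- pairs of loops sharing a common arc, limit loops shared between adjacent fat bands, and singular vertices of degree exceeding~$4$ --- so that the counting preserves the constant~$\mathcal{N}$ rather than a larger multiple.
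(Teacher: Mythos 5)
Your item~(1) is fine (and is essentially the observation the paper leaves implicit: at a nonsingular point the crossing loops are locally straight lines, so every sector has angle at most~$\pi$). For items~(2)--(4) your route is genuinely different from the paper's: the paper counts \emph{corners} directly -- at most eight per pair of distinct systolic bands and at most four per band coming from its two homotopic limit loops -- which yields exactly $\mathcal{N}=8\binom{\bar Q(g)}{2}+4\,\bar Q(g)=4\,\bar Q(g)^2$ and from which the domain and edge counts are read off; you instead build the $1$-complex $\Gamma$ of the at most $2\bar Q(g)$ isolated and limit loops, bound its vertices by pairwise intersections, and appeal to Euler's formula. That route can deliver the right order $O\bigl(\bar Q(g)^2\bigr)$, but as written it has genuine gaps.

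First, item~(4) is not established: even in your generic situation $E=2V$ one gets $E\le 2\bigl(4\bar Q(g)^2-2\bar Q(g)\bigr)$, which is roughly $2\mathcal{N}$, not $\mathcal{N}$, and you give no mechanism for recovering the stated constant, nor for passing from edges of $\Gamma$ (many of which lie inside systolic domains) to edges of the domains in the sense of Definition~\ref{def:vertices}; you flag this as the ``main technical difficulty,'' but it is precisely the content of the claim. Second, the Euler-characteristic step is only carried out when every vertex has degree four; vertices through which three or more loops pass, pairs of loops sharing an arc, and loops disjoint from all others (producing faces with no vertices) all destroy $E=2V$, and the repair -- for instance bounding $E-V=\sum_v(k_v-1)$ by the incidence count $2\binom{2\bar Q(g)}{2}$, where $k_v$ is the number of loops through $v$, and inserting an artificial vertex on each vertex-free loop -- is not supplied, so item~(3) is only proved generically. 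Third, in item~(2) the inference ``two distinct limit loops pass through $q$, hence $q$ is a vertex of $\Gamma$'' overlooks the case, allowed by Proposition~\ref{prop:kissing}, in which those loops meet along an arc containing $q$ in its interior: by Definition~\ref{def:vertices} such a point is \emph{not} a vertex. This configuration must be excluded (for example, by noting that a closed fat band occupies an angular sector of exactly $\pi$ at a singular boundary point, so loops that coincide near $q$ cannot cover a cone angle greater than $2\pi$), or the special singularities must be counted directly together with the corners produced by band-pair intersections, which is what the paper does.
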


\begin{proof}
Let~$D$ be a nonsystolic domain of~$M$.  By definition (see
Definition~\ref{def:vertices}), the corners of~$D$ and the special
conical singularities%
\footnote{Though not required for our argument, note that the special
conical singularities of~$M$ can be thought of as degenerate
nonsystolic domains.}
of~$M$ correspond to intersections either
\begin{enumerate}
\item
between two systolic bands, giving rise to at most eight corners; or
\item 
between two \emph{homotopic} limit systolic loops meeting at one or
two points or along an arc, giving rise to at most four corners.
\end{enumerate}
Since there are at most~$\bar{Q}(g) \leq 2^{10} \, g^2 \log g$ systolic bands by~Proposition~\ref{item:kissing3}, this yields
at most
\[
\mathcal{N} =
8{{\bar{Q}(g)}\choose2}+4\,\bar{Q}(g) = 4 \, \bar{Q}(g)^2  \leq2^{22}\, g^4 \log^2(g)
\] 
corners and
special conical singularities.  In particular, the surface~$M$
decomposes into at most~$\mathcal{N}$ nonsystolic domains with a total
number of at most~$\mathcal{N}$ edges.
\end{proof}

\section{The kite excision trick}
\label{six}

In this section, we describe the kite excision trick, a key tool in
the proof of Theorem~\ref{theo:main}.  Consider a nonpositively curved
piecewise flat surface~$M$.  Let~$p,q\in M$ be two conical
singularities connected by a geodesic arc~$[p,q]$ with no conical
singularity lying in the interior~$(p,q)$.  Denote by~$\theta_p$
and~$\theta_q$ the total angles at~$p$ and~$q$.

\begin{definition}[Kite] 
\label{def:kite}
Let~$r\in M$ (not on~$[p,q]$) be a point such that the triangle~$pqr$
is flat.  Conside the reflection~$pqr'$ of triangle~$pqr$ with respect
to~$[p,q]$.  Define the \emph{kite}~$K=prqr'$ as the union of the two
symmetric triangles; see~Figure~\ref{fig:kite1}.
The two opposite vertices~$p$ and~$q$ of~$K$ are referred to as the
\emph{main vertices} of the kite.  The \emph{width}~$w$ of~$K$ is the
length of the diagonal~$[r,r']\subseteq K$.
\end{definition}

\begin{definition}[Admissible kite]
The kite~$K$ is \emph{admissible} if all its angles are less
than~$\pi$ and its angles at the main vertices~$p$ and~$q$ are related
to the angle excesses of the conical singularities~$p$ and~$q$ as
follows:
\begin{align*}
\measuredangle rpr' & \leq \min\{ \theta_p-2\pi, \pi \}  \\
\measuredangle rqr' & \leq \min \{ \theta_q-2\pi, \pi \}.
\end{align*}
\end{definition}

\begin{figure}[htb] 
\vspace{0.2cm} \def\svgwidth{6cm} 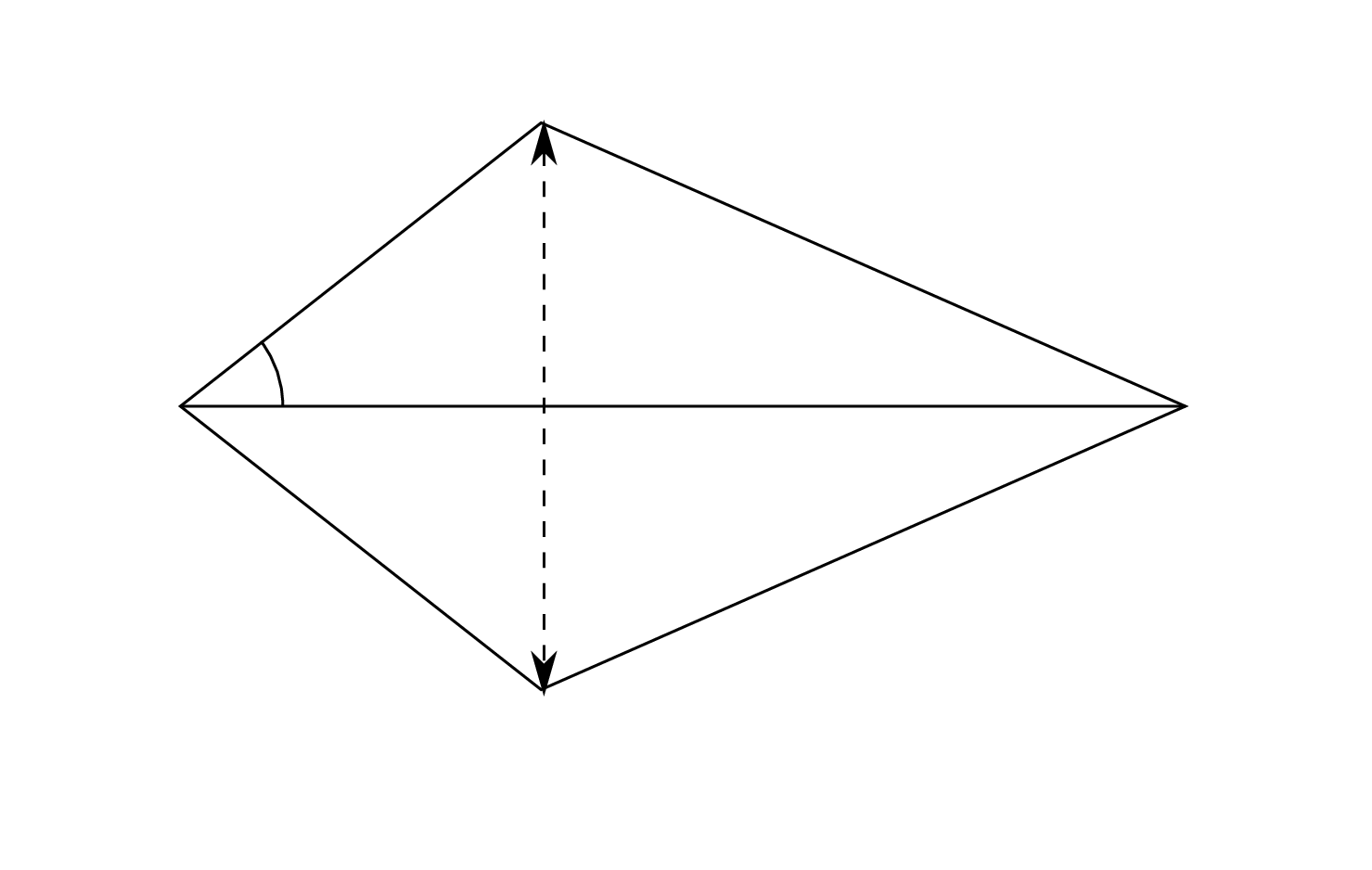
\vspace{0.2cm}\caption{The flat kite~$K$ of width~$w$}
\label{fig:kite1}
\end{figure}

\begin{definition}[Exact and diamond kites]
When~$p$ is a small conical singularity, an admissible kite~$K$ is
\emph{exact at}~$p$ if the following equality is satisfied:
\begin{align*}
\measuredangle rpr' & = \theta_p-2\pi < \pi. 
\end{align*}
$K$ is called a \emph{diamond kite} if~$|pr|=|qr|$.
\end{definition}

\begin{remark}
By construction, every admissible kite~$K$ is convex. 
\end{remark}

\begin{definition}[Excised surface~$M_w$] 
\label{def:excision}
Let~$K_w\subseteq M$ be an admissible kite of width~$w=|rr'|$.  We
perform a cut-and-paste procedure on~$M$ as follows.  We excise the
kite~$K_w\subseteq M$ and introduce identifications on the boundary
of~$M\setminus K$ by setting~$[p,r]\sim [p,r']$ and~$[q,r] \sim
[q,r']$.  The result of the surgery is a piecewise flat surface
\begin{equation}
\label{e61}
M_w=(M\setminus K_w)/\!\sim
\end{equation}
of genus~$g$ with conical singularities.
\end{definition}
Note that~$\area(M_w)<\area(M)$.
\begin{definition} 
\label{rem:quotient}
The quotient map
\begin{equation}
\label{e62b}
\pi_w\colon M \to M_w
\end{equation}
is obtained by collapsing each segment of~$K_w$ parallel to the
diagonal~$[r,r']$ to a point.
\end{definition}
The map~$\pi_w$ is a homotopy equivalence.

\begin{proposition}
\label{prop:same}
If~$K_w$ is an admissible kite then the excised surface~$M_w$ is
nonpositively curved with at most one more conical singularity
than~$M$.  Furthermore, if~$K_w$ is exact at one of its main vertices,
then the surface~$M_w$ has at most as many conical singularities
as~$M$.
\end{proposition}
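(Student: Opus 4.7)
The plan is to verify the nonpositive curvature of $M_w$ by computing the total angle at each point, and then to bookkeep the conical singularities created and destroyed by the excision. Since the kite $K_w$ is symmetric under reflection in the diagonal $[p,q]$, one has $|pr|=|pr'|$ and $|qr|=|qr'|$, so the gluings $[p,r]\sim[p,r']$ and $[q,r]\sim[q,r']$ are length-preserving and endow $M_w$ with a piecewise flat structure inherited from $M\setminus K_w$. For a clean exposition I would first assume that no conical singularity of $M$ lies on $\partial K_w$ except possibly at the main vertices $p$ and $q$; the general case follows from the same angle bookkeeping applied to mirror pairs of singularities on the edges.

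The pointwise angle check then splits into four cases. At a point in the interior of $M\setminus K_w$ the local geometry is unchanged, so the total angle is at least $2\pi$ by hypothesis. At an interior point $x$ of the identified edge image of $[p,r]\sim[p,r']$, the kite side at $x$ contributed a half-disk of angle $\pi$ in $M$, so the non-kite side contributes $\pi$; combining the analogous contribution from the mirror point on $[p,r']$ yields total angle $\pi+\pi=2\pi$, and the image point is regular. At the main vertex $p$, excising the kite leaves the angle $\theta_p-\measuredangle rpr'$, and the admissibility bound $\measuredangle rpr'\leq\theta_p-2\pi$ makes this at least $2\pi$, with equality precisely when $K_w$ is exact at $p$; the same holds at $q$. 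At the merged vertex $r\sim r'$, the non-kite contributions combine, by the reflection symmetry of the kite, to $\theta_r+\theta_{r'}-2\measuredangle prq\geq 4\pi-2\measuredangle prq>2\pi$, using $\theta_r,\theta_{r'}\geq 2\pi$ and $\measuredangle prq<\pi$ from convexity of the admissible kite. Hence every point of $M_w$ has total angle at least $2\pi$ and $M_w$ is nonpositively curved.

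To count singularities, I observe that those of $M$ disjoint from the closed kite are preserved verbatim, so only $p$, $q$, and $r\sim r'$ need to be tracked. By the equality cases above, $p$ remains singular in $M_w$ unless $K_w$ is exact at $p$, in which case $p$ becomes regular; analogously at $q$; and the merged vertex $r\sim r'$ is always a new conical singularity of $M_w$ since its total angle strictly exceeds $2\pi$. In the general admissible case, $p$ and $q$ persist as singularities while $r\sim r'$ is new, producing exactly one more singularity than $M$. If $K_w$ is exact at $p$ or $q$, the loss at that main vertex cancels the gain at $r\sim r'$ and the count is unchanged; if it is exact at both, the count decreases by one. This establishes both assertions. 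The core of the argument is a routine angle computation; the step I expect to require the most care is correctly interpreting the identifications to read off the total angle at $p$ and $q$ in $M_w$, and checking that the strict inequality $\measuredangle prq<\pi$ coming from convexity of the kite forces $r\sim r'$ to be a genuine new singularity, without which the singularity count would not close up.
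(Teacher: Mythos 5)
Your proposal is correct and follows essentially the same route as the paper: the excision and edge identifications are analyzed via total angles, giving $\theta_p-\measuredangle rpr'\geq 2\pi$ and $\theta_q-\measuredangle rqr'\geq 2\pi$ at the main vertices, angle $2\pi+\measuredangle rpr'+\measuredangle rqr'>2\pi$ at the merged point $r\sim r'$, and the count closes because exactness at $p$ turns $p$ into a regular point, compensating the possible new singularity at $r\sim r'$. Your extra checks (regularity of interior points of the identified edges, possible cone points at $r$, $r'$ or on the kite sides) are harmless refinements of the same bookkeeping and do not change the argument.
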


\begin{proof}
The first statement follows by analyzing the total angles of the
points corresponding to the vertices of~$K_w$ and showing that they are
at least~$2 \pi$.  More precisely, the total angles at the points~$p$
and~$q$ in the excised surface~$M_w$ are~$\theta_p - \measuredangle
rpr'$ and~$\theta_q-\measuredangle rqr'$, both of which are at
least~$2 \pi$ since the kite is admissible.  Similarly, the total
angle at the point~$r=r'\in M_w$
is~$2\pi+\measuredangle{}rpr'+\measuredangle rqr'$.

For the second statement, even if the point~$r$ (identified with~$r'$)
creates a new conical singularity, the point~$p$ of total angle~$2\pi$
is no longer a singularity in the new surface~$M_w$.
\end{proof}

\begin{proposition}  
\label{prop:conv}
\label{prop:local}
Let~$p,q\in M$.  Consider an admissible kite~$K_w$ with main
diagonal~$[p,q]$ which is either a diamond or an exact kite at~$p$.
Then the excised surface~$M_w$ converges to~$M$, both for the
Gromov--Hausdorff distance and the Lipschitz distance, as the
width~$w$ of~$K_w$ tends to zero.
\end{proposition}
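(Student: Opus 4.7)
The plan is to construct explicit approximation maps realizing both convergences. Since Lipschitz convergence (with bilipschitz constant tending to~$1$) implies Gromov--Hausdorff convergence, I would present the two arguments separately for clarity but using common geometric tools.

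For Gromov--Hausdorff, I would use the quotient map $\pi_w\colon M \to M_w$ directly. It is surjective and $1$-Lipschitz (identifying boundary segments of $M \setminus K_w$ can only shorten paths, and $\pi_w$ is the quotient), and each of its fibers is a segment of $K_w$ parallel to $[r,r']$ of length at most $w$; outside $K_w$ the fibers are singletons. For the reverse distance estimate, I would lift a minimizing geodesic $\gamma_w$ in $M_w$ from $\pi_w(x)$ to $\pi_w(y)$ to a piecewise-geodesic in $M$: each time $\gamma_w$ crosses the image of the kite boundary in $M_w$, insert a detour of length at most $w$ across $K_w$. Since a minimizing geodesic in a nonpositively curved surface has a uniformly bounded number of crossings with a fixed geodesic arc (by convexity of the distance function to geodesics), the total excess is $O(w)$, giving $d_{\mathrm{GH}}(M, M_w) = O(w) \to 0$.

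For Lipschitz convergence, I would construct a bilipschitz homeomorphism $\Phi_w\colon M \to M_w$ with bilipschitz constant tending to~$1$. Let $S \subseteq M$ be the compact set to which $K_w$ degenerates: $S = [p,q]$ in the diamond case (where $|pr| = |qr| \to |pq|/2$ stays bounded), and $S = \{p\}$ in the exact-at-$p$ case (where the fixed angle $\theta_p - 2\pi$ at $p$ forces $|pr| = O(w)$). Fix an auxiliary scale $\delta = \delta(w)$ with $w \ll \delta \to 0$, for instance $\delta = \sqrt{w}$, and let $U_\delta$ be the $\delta$-neighborhood of $S$. Set $\Phi_w$ to be the identity on $M \setminus U_\delta$ via the natural isometric identification with $M_w \setminus \pi_w(U_\delta)$, and define $\Phi_w$ explicitly on $U_\delta$: in the diamond case, use Euclidean coordinates $(t,s)$ adapted to $[p,q]$ and linearly squeeze the transverse coordinate~$s$ to absorb the $O(w)$-wide slit, with a linear angular rescaling in polar coordinates near $p$ and $q$ to handle the angular defects $\measuredangle rpr'$ and $\measuredangle rqr'$, both of size $O(w/|pq|)$; in the exact case, use polar coordinates around $p \in M$ (total angle $\theta_p$) and around $r \in M_w$ (total angle $\theta_p + \measuredangle rqr' \to \theta_p$, at distance $O(w)$ from $p$), defining $\Phi_w$ as a linear angular rescaling composed with a translation of magnitude $O(w)$, interpolated to the identity on the outer annulus of $U_\delta$. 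Elementary estimates then yield a bilipschitz constant of the form $1 + O(w/\delta) = 1 + O(\sqrt{w}) \to 1$.

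The main obstacle is verifying the bilipschitz bound near the conical singularities, where polar coordinates on $M$ and $M_w$ must be carefully aligned and matched to the Euclidean identity on the annular transition region. The diamond and exact hypotheses are essential here: they guarantee that the limiting angular data of $M_w$ at $p$, $q$, $r$ match the original data of $M$ at $p$ and $q$ (with $r$ becoming regular in the diamond limit, and $r \to p$ carrying the singularity of matching total angle in the exact limit). Absent these hypotheses, a fixed multiplicative angular distortion would persist as $w \to 0$ and prevent Lipschitz convergence, though Gromov--Hausdorff convergence would still hold.
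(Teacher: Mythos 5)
Your overall strategy---explicit comparison maps with bilipschitz constant tending to $1$, from which Gromov--Hausdorff closeness follows---is the same as the paper's (the paper builds $(1+\epsilon)$-bilipschitz maps $\phi_w\colon M_w\to M$, piecewise linear on triangles), and your treatment of the diamond case and your quotient-map argument for Gromov--Hausdorff convergence are essentially sound. The genuine gap is in the exact-kite case. You take the degeneration set to be $S=\{p\}$ and define $\Phi_w$ as the identity outside a $\delta$-ball around $p$, ``via the natural isometric identification'' of $M\setminus U_\delta$ with $M_w\setminus\pi_w(U_\delta)$. But the kite $K_w=prqr'$ is by definition the union of the two triangles $pqr$ and $pqr'$, so it always contains the fixed main diagonal $[p,q]$. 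Exactness at $p$ does force $|pr|=O(w)$, yet the kite is then a sliver of width at most $w/2$ running along the \emph{entire} segment $[p,q]$; it collapses onto $[p,q]$, not onto the point $p$. The excision therefore modifies the surface all along $[p,q]$: the sides $[q,r]$ and $[q,r']$, of length approximately $|pq|$, get identified, the angle at $q$ drops by $\measuredangle rqr'=O(w)$, and points on opposite sides of the sliver are brought together. Hence there is no isometric identification of $M\setminus B_\delta(p)$ with $M_w\setminus\pi_w(B_\delta(p))$, and the map you describe is not even well defined outside $U_\delta$, let alone $(1+O(w/\delta))$-bilipschitz.

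The repair is to take, in the exact case as well, a thin neighborhood of the whole diagonal $[p,q]$ as the support of $\Phi_w$: combine your transverse squeeze along $[p,q]$ (which absorbs the $O(w)$ displacements along the seam and the $O(w)$ angular change at $q$) with your fixed-angle rescaling near $p$, performed at a scale $\delta\gg w$ to absorb the fixed defect $\theta_p-2\pi$ there. This is in effect what the paper does: it introduces a point $p_*$ on the geodesic extension of $[q,p]$ beyond $p$, a flat region $\mathcal{R}$ bounded by $[p_*,r_0]$ and a circular arc, and a $(1+\epsilon)$-bilipschitz self-map of $\mathcal{R}$ sliding $r$ to $p$, glued to linear maps on the remaining triangles of the kite---so the comparison map is supported on a neighborhood of all of $[p,q]$, not just of $p$. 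A secondary remark: in your Gromov--Hausdorff argument, the bound on the number of crossings of a minimizing geodesic with the collapsed segment should be justified via lengths (a geodesic of length at most $\diam(M_w)$ can recross a fixed short segment only a bounded number of times, with the bound controlled by the systole), rather than by convexity of the distance to a geodesic; in any case, once the corrected bilipschitz maps are in hand, Gromov--Hausdorff convergence follows directly, as in the paper.
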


\begin{proof}
Fix an admissible diamond~$K_D = pr_0qr'_0$ with main
diagonal~$[p,q]$.  Consider a smaller admissible diamond~$K_w = prqr'$
of width~$w$, and build the excised surface~$M_w=(M\setminus
K_w)/\!\sim$ as in~\eqref{e61}.  Let~$s$ be the midpoint of~$[p,q]$,
so that~$r\in (r_0,s)$.  The diamond is the union of two triangles,
$pr_0r$ and~$qr_0r$.  We will need a map~$\phi_w$ defined as follows.

\begin{definition}
\label{d69}
Consider the linear map~$\phi_w\colon pr_0r\to pr_0s$ (respectively,
$\phi_w\colon qr_0r\to qr_0s$) fixing~$[p,r_0]$ and mapping the
triangle~$pr_0r$ (respectively,~$qr_0r$) to the right-angle
triangle~$pr_0s$ (respectively,~$qr_0s$).  We extend the linear map to
a continuous map
\begin{equation}
\label{e62}
\phi_w\colon M_w\to M
\end{equation}
by the identity map on the complement in~$M_w$ of~$K_D\setminus K_w$.
\end{definition}

The map~$\phi_w$ is clearly~$(1+\epsilon)$-bilipschitz (i.e., the
bilipschitz constant tends to~$1$ as~$w$ tends to zero).  It follows
that the quadrilateral~$pr_0qr$ is~$(1+\epsilon)$-bilipschitz with the
triangle~$pr_0q$.  By symmetry, the same holds with the
quadrilateral~$pr_0'qr'$ and the triangle~$pr_0'q$.  Thus the map
$\phi_w$ is~$(1+\epsilon)$-bilipschitz.  The surfaces are therefore
also Gromov--Hausdorff close.

Now consider the case of a kite~$K_E = pr_0qr'_0$ exact at~$p$.
Consider a point~$p_*$ close to~$p$ such that~$p_*$ is on a geodesic
extension~$p_*q$ of~$[p,q]$ so that the rotation angle of~$p_*q$
at~$p$ is equal to~$\frac{\theta_p}{2} \geq \pi$ on either side of the
segment~$p_*q$.  Since the kite~$K_E$ is exact at~$p$, we
have~$p\in[p_*,r_0]$.  Consider the segment~$[p_*,r_0]$
containing~$p$.  Fix a circular arc~$\wideparen{p_*r_0} \subseteq M
\setminus K_E$ bounding a flat region~$\mathcal{R}$ together with the
segment~$[p_*,r_0]$ containing~$p$.  Take a smaller kite~$K_w =
prqr'\subseteq K_E$ of width~$w$ and exact at~$p$,
where~$r\in(p,r_0)$.  Let~$p_r \in [p_*,p]$ with~$|pp_r| = |pr|$.
Note that the rotation angle of~$\mathcal R$ at~$p$ is precisely~$\pi$
by the exactness hypothesis.  The rotation angle is also~$\pi$ at~$r$
and~$p_r$ by construction.  There exists a~$(1+\epsilon)$-bilipschitz
homeomorphism
\begin{equation}
\label{e64}
h_r\colon \mathcal{R} \to \mathcal{R}
\end{equation}
which fixes the circular arc~$\wideparen{p_*r_0}$ pointwise and
linearly maps~$[r_0,r]$,~$[r,p]$, and~$[p,p_*]$
to~$[r_0,p]$,~$[p,p_r]$, and~$[p_r,p_*]$, respectively,
where~$\epsilon$ tends to~$0$ as~$r$ approaches~$p$.

\begin{definition}
\label{d610}
We combine the map~$h_r$ of \eqref{e64} with
the~\mbox{$(1+\epsilon)$}-bilipschitz linear map from~$rr_0q$
to~$pr_0q$ fixing~$[r_0,q]$, and perform a symmetric construction on
the other half of the kite, to produce a map
\begin{equation}
\label{e65}
\phi_w\colon M_w\to M \quad \text{where} \quad M_w=(M\setminus
K_E)/\!\sim.
\end{equation}
\end{definition}
The resulting map~$\phi_w\colon M_w\to M$ is
$(1+\varepsilon)$-bilipschitz, as in the diamond case.
\end{proof}

\section{Systole comparison} 
\label{sec:comparison}

Let~$M$ be a nonpositively curved piecewise flat surface of genus~$g$.
Consider a nonsystolic domain~$D\subseteq M$.  Let~$p$ and~$q$ be
conical singularities in the closure~$\overline D$ of~$D$, joined by a
geodesic arc~$[p,q]\subseteq\overline D$.  Note that the arc may start
and end at the same point~$p=q$ in the cases ($D_1$) and ($D''_1$)
below.  We can assume that no conical singularity lies in the
interior~$(p,q)$, by picking a different pair of conical singularities
along the arc, if necessary.  Recall that the set~$D\subseteq M$ is
open.

We will now choose an admissible kite~$K_w\subseteq M$ of width~$w$
constructed by symmetry with respect to~$[p,q]$
(see~Definition~\ref{def:kite}) in one of the following ways; see
Figures~\ref{fig:case1} through~\ref{fig:case4}.
\begin{enumerate}
\item[($D_1$)]
if~$[p,q]\subseteq D$, take a diamond~$K_w$ of sufficiently small
width so that it lies in~$D$;
\item[($D'_1$)] if~$[p,q)\subseteq D$ where~$q \in \partial D$ and the
angle of~$D$ at the point~$q$ is greater than~$\pi$, take a
diamond~$K_w$ of sufficiently small width so that~\mbox{$K_w \setminus
\{q\}$} lies in~$D$;
\item[($D''_1$)]
if~$(p,q)\subseteq D$ with~$p,q \in \partial D$ and the angles of~$D$
at~$p$ and~$q$ are greater than~$\pi$, take a diamond~$K_w$ of
sufficiently small width so that~$K_w \setminus \{p,q\}$ lies in~$D$;
\item[($E_1$)]
if~$[p,q]\subseteq D$ and~$p$ is a small conical singularity,
take~$K_w$ exact at~$p$ of sufficiently small width so that it lies
in~$D$;
\item[($E'_1$)]
if~$[p,q)\subseteq D$ with~$p$ a small conical singularity and~$q \in
\partial D$, and the angle of~$D$ at~$q$ is greater than~$\pi$,
take~$K_w$ exact at~$p$ of sufficiently small width so that~$K_w
\setminus \{q\}$ lies in~$D$;
\item[($E_2$)]
if~$[p,q]$ is contained in the interior of an edge of~$\partial D$
and~$p$ is a small conical singularity,
take~$K_w$ exact at~$p$ of sufficiently small width so that the part
of every systolic loop passing through~$K_w$ is parallel to~$[p,q]$.%
\footnote{Recall that, by definition of a nonsystolic domain~$D$, no
systolic loop meets the interior of an edge of~$\partial D$ unless it
contains this edge, which ensures the existence of such kites.}
\end{enumerate}

\begin{figure}[htb] 
    \centering
    \begin{minipage}{0.45\textwidth}
        \centering
        \def\svgwidth{4cm} 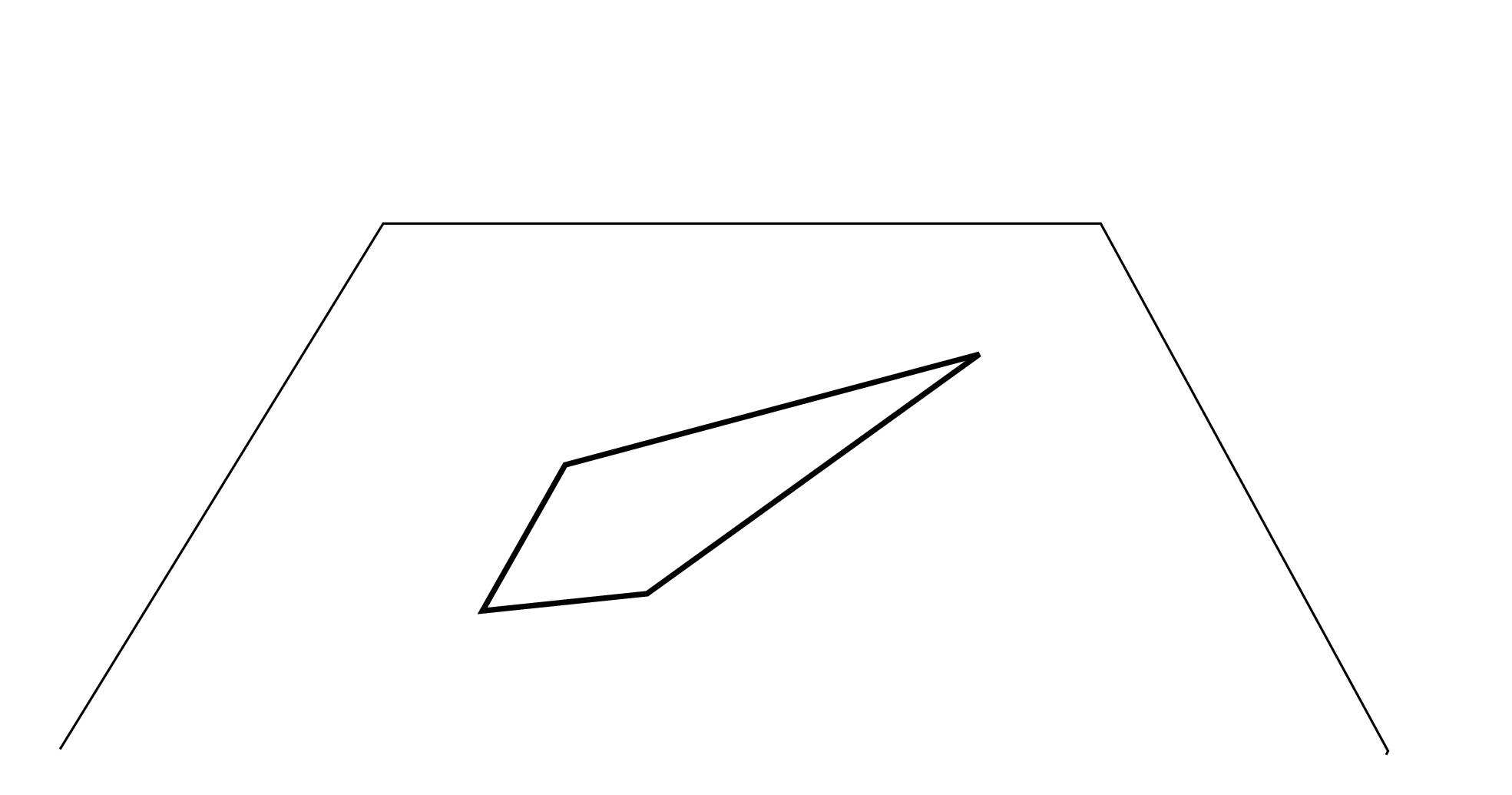 
                \caption{$(D_1), (E_1)$}
                \label{fig:case1}
    \end{minipage}\hfill
    \begin{minipage}{0.45\textwidth}
        \centering
        \def\svgwidth{4cm} 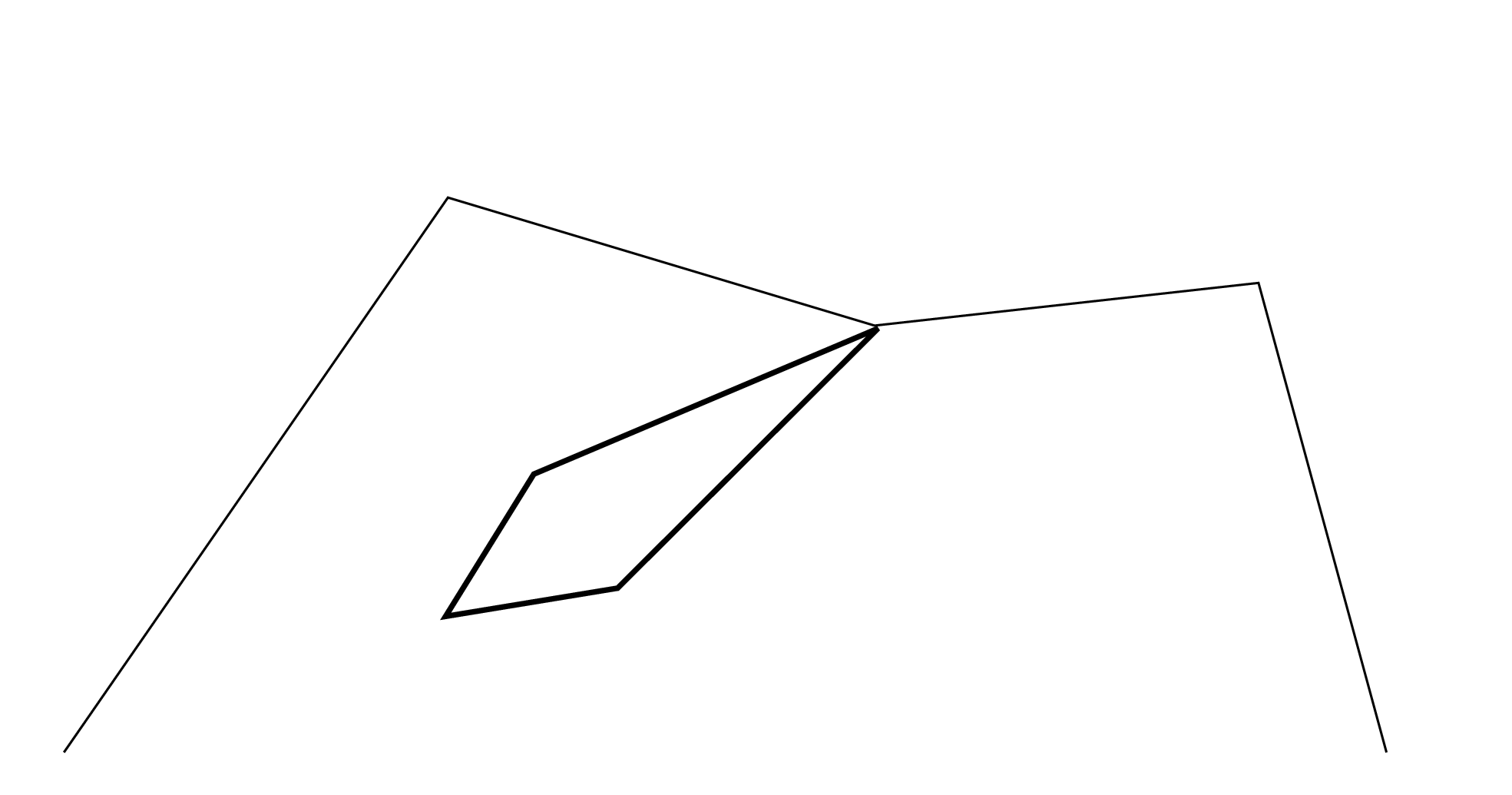 
        \caption{$(D'_1), (E'_1)$}
        \label{fig:case2}
    \end{minipage}\hfill
    \begin{minipage}{0.45\textwidth}
        \centering
        \def\svgwidth{4cm} 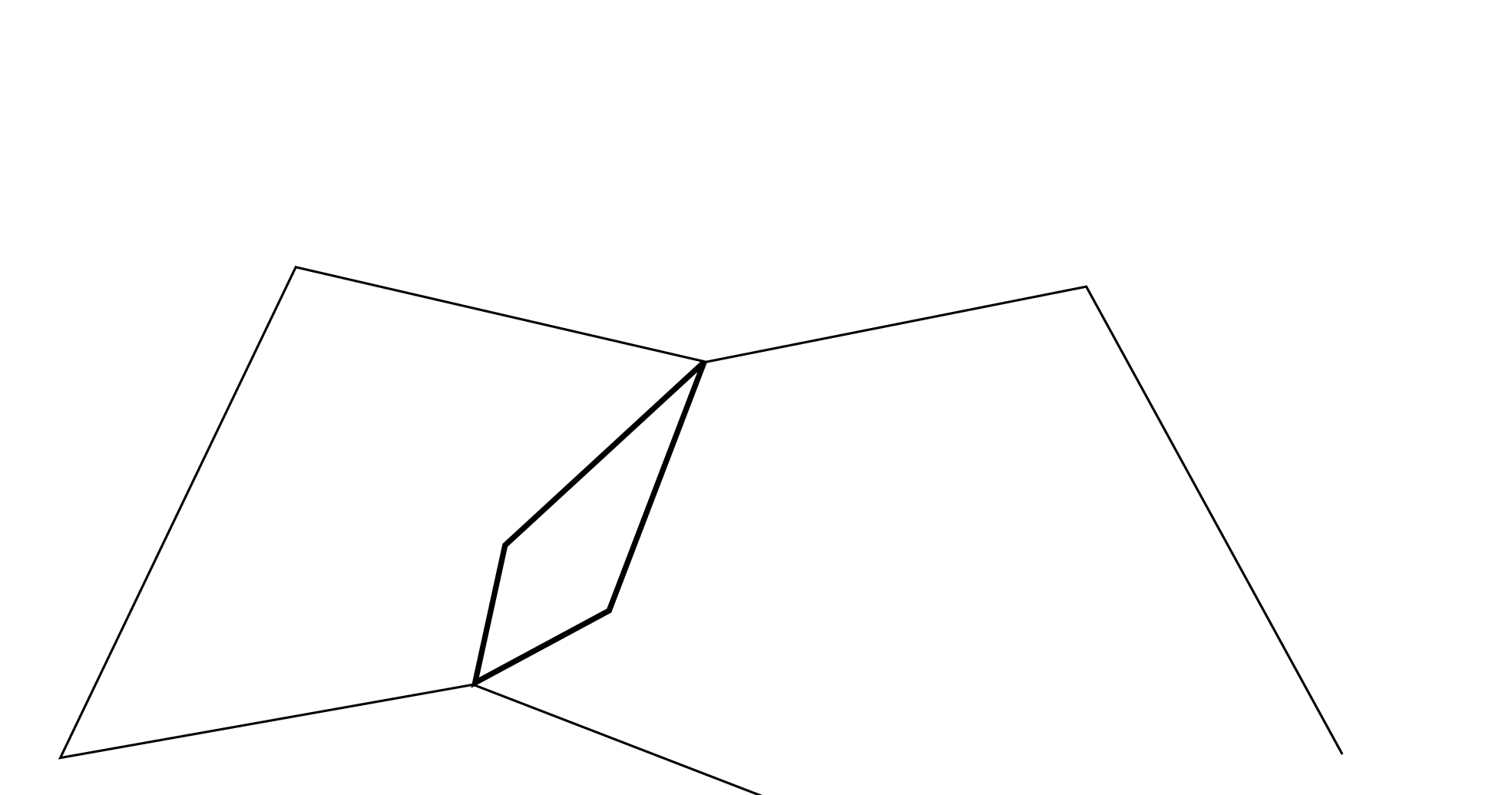  
        \caption{$(D''_1)$}
        \label{fig:case3}
    \end{minipage}\hfill
    \begin{minipage}{0.45\textwidth}
        \centering
        \def\svgwidth{4cm} 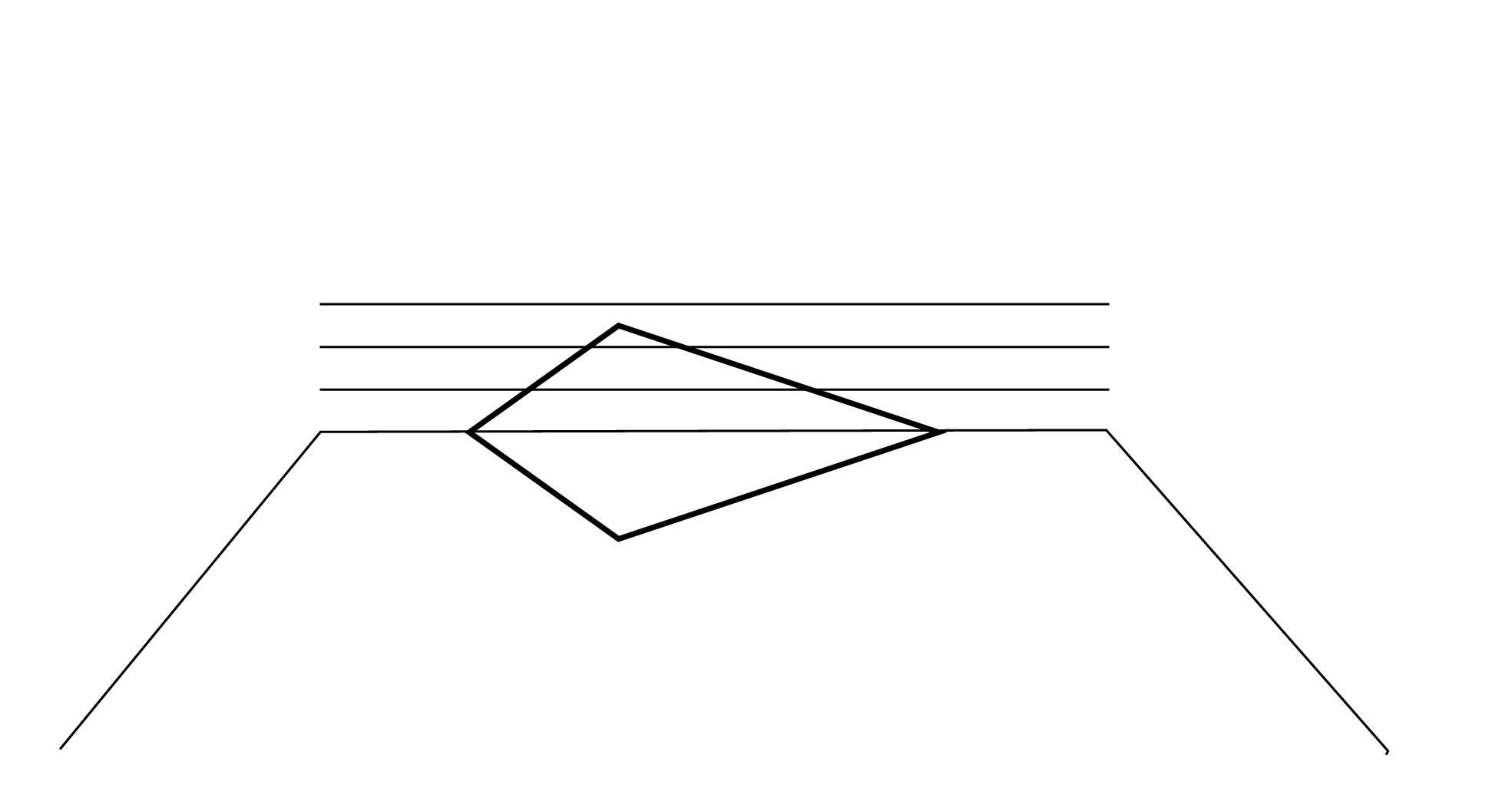  
        \caption{$(E_2)$}
        \label{fig:case4}
    \end{minipage}
\end{figure}

Consider the surface~$M_w=(M\setminus K_w)/\!\sim$\, where the
kite~$K_w$ is admissible and satisfies one of the previous hypotheses.
Only the properties for kites satisfying ($E_1$), ($E'_1$), and ($E_2$)
are required for the proof of our main result,
Theorem~\ref{theo:bound}.  In the proofs of
Proposition~\ref{prop:sys1} and Proposition~\ref{prop:sys2}, the
width~$w$ of~$K_w$ will need to be be chosen even smaller to satisfy
further restrictions.

Since the quotient map~$\pi_w\colon M \to M_w$ of \eqref{e62b} is a
nonexpanding homotopy equivalence, we have~$\sys(M_w) \leq \sys(M)$.
In the following, we will show that the reverse inequality holds in
the cases \textnormal{($D_1$)}, \textnormal{($E_1$)},
\textnormal{($D'_1$)}, \textnormal{($E'_1$)}, \textnormal{($D''_1$)}
and \textnormal{($E_2$)} as well.

\subsection{Analysis of cases ($D_1$) and ($E_1$)}

\begin{proposition} 
\label{prop:sys1}
Consider an admissible kite~$K_w\subseteq D$ satisfying
\textnormal{($D_1$)} or~\textnormal{($E_1$)}, so that~$K_w$ is either
an admissible diamond or an exact kite at~$p$.  If the width~$w$
of~$K_w$ is sufficiently small, then
\[
\sys(M_w) = \sys(M).
\]
\end{proposition}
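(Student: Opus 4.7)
The plan is to compare the systoles of $M$ and $M_w$ class-by-class, using the $1$-Lipschitz quotient $\pi_w\colon M\to M_w$ and its $(1+\varepsilon_w)$-bilipschitz near-inverse $\phi_w\colon M_w\to M$ from Proposition~\ref{prop:conv} (with $\varepsilon_w\to0$ as $w\to0$). Both are homotopy equivalences inducing a canonical bijection between nontrivial free homotopy classes; for such a class $\mathcal C$ I write $\ell_M(\mathcal C)$ and $\ell_{M_w}(\mathcal C)$ for the length of the closed geodesic representatives, which are well defined up to parallel families of equal length by nonpositive curvature.

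The easy inequality $\sys(M_w)\le\sys(M)$ follows from noting that any systolic loop $\gamma\subseteq M$ lies in a systolic band, and hence is disjoint from the nonsystolic domain $D\supseteq K_w$. Since $\pi_w$ is isometric on $M\setminus K_w$, the image $\pi_w(\gamma)$ is a noncontractible loop in $M_w$ of length $\sys(M)$.

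For the reverse inequality, let $\gamma_w$ be a systolic loop of $M_w$ and $\mathcal C$ its class in $M$. Pushing $\gamma_w$ forward by $\phi_w$ produces a loop in $M$ in class $\mathcal C$ of length at most $(1+\varepsilon_w)|\gamma_w|$, so that
\[
\ell_M(\mathcal C)\le(1+\varepsilon_w)\,|\gamma_w|\le(1+\varepsilon_w)\,\sys(M).
\]
If $\mathcal C$ is systolic in $M$, its $M$-geodesic representative avoids $K_w$ and therefore pushes forward isometrically through $\pi_w$ to a closed geodesic of $M_w$ of length $\sys(M)$. The uniqueness of closed geodesics in a nontrivial class in CAT$(0)$ (up to parallel families of identical length) then forces $\ell_{M_w}(\mathcal C)=\sys(M)=|\gamma_w|$, as desired.

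The principal obstacle is ruling out the other alternative, that $\mathcal C$ is nonsystolic. Here I appeal to the discreteness of the length spectrum of the compact nonpositively curved surface $M$: only finitely many free homotopy classes satisfy $\ell_M(\mathcal C)\le 2\sys(M)$, and each nonsystolic class in this finite set satisfies $\ell_M(\mathcal C)>\sys(M)$ strictly, so there is a uniform gap $\delta>0$, which after the harmless shrinking $\delta\le\sys(M)$ also serves for all classes with $\ell_M(\mathcal C)>2\sys(M)$, yielding $\ell_M(\mathcal C)\ge\sys(M)+\delta$ for every nonsystolic class. Choosing the width $w$ of the admissible kite so small that $\varepsilon_w\,\sys(M)<\delta$, the displayed inequality becomes $\ell_M(\mathcal C)<\sys(M)+\delta$, contradicting $\ell_M(\mathcal C)\ge\sys(M)+\delta$. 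Hence $\mathcal C$ must be systolic in $M$, and combining with the easy inequality gives $\sys(M_w)=\sys(M)$.
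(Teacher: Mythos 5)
Your argument is correct, but it follows a different route from the paper's own proof of this proposition. You argue class-by-class: the systolic loop $\gamma_w\subseteq M_w$ determines a free homotopy class $\mathcal C$ of $M$, the $(1+\varepsilon_w)$-bilipschitz map $\phi_w$ gives $\ell_M(\mathcal C)\leq(1+\varepsilon_w)\sys(M)$, a uniform gap $\delta>0$ in the length spectrum of $M$ below $2\sys(M)$ forces $\mathcal C$ to be systolic for small $w$, and then the $M$-systolic geodesic (which avoids $K_w$ since it misses the nonsystolic domain $D\supseteq K_w$) projects isometrically under $\pi_w$ to a closed geodesic of $M_w$, so the flat strip theorem gives $|\gamma_w|=\sys(M)$. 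This is essentially the strategy the paper reserves for the neighboring cases (Propositions~\ref{prop:sys1b} and~\ref{prop:sys2}): finiteness of the relevant conjugacy classes, a continuity/gap argument, and the flat strip theorem. The paper's own proof of Proposition~\ref{prop:sys1} is more local and avoids all of this: since the fixed kite $K_D$ is a compact subset of a nonsystolic domain, the based systole satisfies $\sys(M,x)>\sys(M)+\varepsilon$ on $K_D$, and one splits noncontractible loops of $M_w$ according to whether they meet $\phi_w^{-1}(K_D)$ (in which case the bilipschitz bound transfers the based-systole estimate) or not (in which case $\phi_w$ is an isometry along the loop); no homotopy-class bookkeeping, geodesic representatives, or spectral discreteness is needed. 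Your version buys an explicit quantitative threshold on $w$ in terms of the gap $\delta$ and shows in passing that systolic classes of $M_w$ are systolic classes of $M$, at the cost of two auxiliary facts you should at least mention: that a compact locally CAT$(0)$ surface has only finitely many classes of geodesic length at most $2\sys(M)$ (standard properness of the deck action), and that the class identifications induced by $\pi_w$ and $\phi_w$ are mutually inverse, i.e.\ $\pi_w\circ\phi_w$ is homotopic to the identity; this holds because the two maps agree outside a disk whose image lies in a simply connected region, but it deserves a sentence. Neither point is a gap, only an omission of routine verification.
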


\begin{proof}
We first consider a diamond kite~$K_D$ lying in a nonsystolic
domain~$D\subseteq M$ as in the proposition.  Consider the
function~$x\mapsto \sys(M,x)$ on~$M$, where~$\sys(M,x)$ represents the
least length of a shortest noncontractible loop based at the
point~\mbox{$x\in M$}.  Since~$K_D$ is a compact subset of a
nonsystolic domain, there exists an~$\varepsilon>0$ such that
\begin{equation} \label{e71}
\forall x \in K_D, \, \sys(M,x) > \sys(M) + \varepsilon.
\end{equation}
Consider a subkite~$K_w\subseteq K_D$.  We construct the
map~$\phi_w\colon M_w\to M$ using the pair~$K_w\subseteq K_D$ as in
Definition~\ref{d69}.  As~$w$ tends to~$0$, the bilipschitz constant
of~$\phi_w$ tends to~$1$.  Since the deformation~$M_w$ of~$M=M_0$ is
continuous with respect to the bilipschitz distance,
inequality~\eqref{e71} implies that for~$w_0$ sufficiently small, each
noncontractible loop~$\gamma\subseteq M_{w_0}$ based at a point
of~$\phi^{-1}_{w_0}(K_D)$ satisfies
\begin{equation}
\label{e72}
|\gamma|\geq\sys(M).
\end{equation}
Meanwhile if a loop~$\gamma\subseteq M_{w_0}$ is disjoint
from~$\phi^{-1}_{w_0}(K_D)$  then
\begin{equation}
\label{e73}
|\gamma|=|\phi_w^{-1}(\gamma)|\geq\sys(M)
\end{equation}
since~$\phi_{w_0}$ is an isometry outside of~$K_D$.  The
bounds~\eqref{e72} and\eqref{e73} prove the proposition in the case of
a diamond kite.

For an exact kite~$K_E$, we follow a similar procedure with~$\phi_w$
of Definition~\ref{d610}.
\end{proof}

\subsection{Analysis of cases ($D'_1$), ($D''_1$), and ($E'_1$)}

\begin{proposition} 
\label{prop:sys1b}
Consider an admissible kite~$K_w$ with main diagonal~$[p,q]$ with~$K_w
\setminus \{p,q\} \subseteq D$ in one of the following three cases:
\begin{enumerate}
\item[\textnormal{($D'_1$)}]~$K_w$ is an admissible diamond with~$p
\in D$ such that the internal angle of~$D$ at~$q \in \partial D$ is
greater than~$\pi$;
\item[\textnormal{($D''_1$)}]~$K_w$ is an admissible diamond,~$p,q \in
\partial D$, and the internal angles of~$D$ at~$p,q \in \partial D$
are greater than~$\pi$;
\item[\textnormal{($E'_1$)}]~$K_w$ is an exact kite at~$p \in D$ such
that the internal angle of~$D$ at~$q \in \partial D$ is greater
than~$\pi$.
\end{enumerate}
If the width~$w$ of~$K_w$ is sufficiently small, then
\[
\sys(M_w) = \sys(M).
\]
\end{proposition}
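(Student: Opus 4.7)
The plan is to adapt the proof of Proposition~\ref{prop:sys1}, extending the analysis to allow one or both of the main vertices~$p$ and $q$ to lie on the boundary of~$D$ rather than in its interior. By Proposition~\ref{prop:conv}, for each sufficiently small width $w$ there exists a $(1+\varepsilon_w)$-bilipschitz homeomorphism $\phi_w\colon M_w \to M$ which is the identity outside a fixed reference region $K'_D\subseteq M_w$ associated to a larger admissible kite $K_D\supseteq K_w$, with $\varepsilon_w\to 0$ as $w\to 0$. Using the hypothesis that the internal angle of~$D$ at each boundary main vertex exceeds~$\pi$, the reference kite $K_D$ can be chosen so that $K_D\setminus\{p,q\}\subseteq D$ in case~\textnormal{($D''_1$)} and $K_D\setminus\{q\}\subseteq D$ in cases~\textnormal{($D'_1$)} and~\textnormal{($E'_1$)}; in particular, $K_D$ meets $\partial D$ only at its main vertices.

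Given a noncontractible loop $\gamma\subseteq M_w$, the argument splits along the same lines as Proposition~\ref{prop:sys1}. If $\gamma$ is disjoint from~$K'_D$, then $\phi_w$ acts as an isometry on~$\gamma$, the image $\phi_w(\gamma)\subseteq M$ is a noncontractible loop of the same length, and $|\gamma|_{M_w}\geq\sys(M)$ follows. If $\gamma$ meets~$K'_D$ at a point~$x$ whose image $\phi_w(x)$ lies at distance at least some fixed $\delta>0$ from~$\{p,q\}$, then $\phi_w(x)$ belongs to the compact set $K_D\setminus B_\delta(\{p,q\})\subseteq D$; continuity and strict positivity of the map $y\mapsto\sys(M,y)-\sys(M)$ on this compact subset of the open nonsystolic domain give the uniform bound $\sys(M,y)\geq\sys(M)+\varepsilon(\delta)$, and the bilipschitz estimate $|\gamma|_{M_w}\geq(1+\varepsilon_w)^{-1}\sys(M,\phi_w(x))$ closes the argument as in Proposition~\ref{prop:sys1} for $w$ sufficiently small depending on~$\delta$.

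The delicate case is when every point of $\gamma\cap K'_D$ projects under $\phi_w$ into the $\delta$-neighborhood of~$\{p,q\}$. The plan here is to exploit the angular hypothesis directly at the boundary vertex~$q$ (and at~$p$ in case~\textnormal{($D''_1$)}). Two subcases arise: either the geodesic rotation-angle condition together with the assumption that the internal angle of~$D$ at~$q$ exceeds~$\pi$ prevents any systolic loop of~$M$ from passing through~$q$, in which case $\sys(M,q)>\sys(M)$ strictly and continuity extends the uniform bound to a full neighborhood of~$q$ within~$K_D$; or a systolic loop of~$M$ through~$q$ does exist, in which case, as $D$ is nonsystolic, such a loop necessarily lies locally in $M\setminus D$, hence outside the kite~$K_w$. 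Using the angular excess of~$D$ at~$q$ over~$\pi$, one then lifts the portions of $\gamma$ that cross the collapsed kite boundary near $\pi_w(q)$ to detour paths through~$q$ in~$M\setminus K_w$; the lifts paste together into a noncontractible loop in~$M$ whose length is bounded by $|\gamma|_{M_w}$ asymptotically as $w\to 0$, yielding the desired inequality.

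The main obstacle is this last subcase, where~$q$ lies on a systolic loop of~$M$ so that $\sys(M,q)=\sys(M)$ and the compactness argument strictly fails at the boundary vertex. The lift from~$M_w$ back to~$M$ must be constructed with care, using both the convexity of~$K_w$ (from admissibility) and the angular excess of~$D$ at~$q$ over~$\pi$ to guarantee that detouring through~$q$ in~$M$ replaces the collapsed-kite crossings of~$\gamma$ without any net increase in length, so that the noncontractible lift has length at most $|\gamma|_{M_w}$ and the systolic inequality transfers in the limit $w\to 0$.
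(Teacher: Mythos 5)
Your reduction to the behaviour near the boundary vertex~$q$ is correctly located: away from~$\{p,q\}$ the compactness argument of Proposition~\ref{prop:sys1} does go through, and the only real issue is that $\sys(M,q)=\sys(M)$ may hold because a systolic loop of~$M$ passes through~$q$. But your treatment of that subcase has a genuine gap. If a noncontractible loop $\gamma\subseteq M_w$ crosses the collapsed segment $\pi_w(K_w)$ transversely at a point near $\pi_w(q)$, its preimage in~$M$ is cut at two points $x_1,x_2$ lying on the two sides $[q,r]$ and $[q,r']$ of the kite, and \emph{any} path in~$M$ reconnecting them --- through~$q$ or straight across~$K_w$ --- has strictly positive length; there is no detour with ``no net increase in length'', and the angular excess of~$D$ at~$q$ does not help here. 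So your lift only yields $|\gamma|\geq \sys(M)-O(w)$ (or $-O(\delta)$), an asymptotic inequality, not the exact equality $\sys(M_w)=\sys(M)$ claimed. Exactness matters: the area removed by the excision is itself only of order~$w$, so an $O(w)$ loss of systole would ruin the contradiction with local extremality for which the proposition is used; ``the systolic inequality transfers in the limit $w\to 0$'' is therefore not an acceptable substitute for the statement.

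The paper argues in the opposite direction, pushing loops forward rather than lifting them back. Since the set $\mathcal{C}$ of conjugacy classes of systolic loops of~$M$ is finite, for $w$ small every systolic loop $\gamma_w\subseteq M_w$ represents a class $C\in\mathcal{C}$ (classes outside $\mathcal{C}$ have length bounded away from $\sys(M)$, by continuity under the bilipschitz maps of Proposition~\ref{prop:conv}). A systolic loop $\gamma\subseteq M$ in class~$C$ avoids the nonsystolic domain~$D$, hence meets the kite at most at~$q$ (and at~$p$ in case ($D''_1$)); if $q\in\gamma$, the hypothesis that the internal angle of~$D$ at~$q$ exceeds~$\pi$ gives rotation angle $L_q>\pi$ on the $D$-side, and choosing $w$ so small that $\measuredangle rqr'<L_q-\pi$ ensures that $\pi_w(\gamma)$ is still a closed geodesic of~$M_w$ with $|\pi_w(\gamma)|=|\gamma|$. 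The flat strip theorem --- freely homotopic closed geodesics on a nonpositively curved surface have equal length --- then forces $|\gamma_w|=|\gamma|=\sys(M)$ exactly. This class-by-class comparison with a pushed-forward geodesic is the ingredient your argument is missing; controlling arbitrary loops of~$M_w$ by a lift cannot produce the exact bound.
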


\begin{proof}
We will focus on the case {($E'_1$)} required for the proof of our
main theorem.  The proof in the other cases is similar.

The set~$\mathcal{C}$ of conjugacy classes in~$\pi_1(M)$ of systolic
loops of~$M=M_0$ is finite.  Denote by~$\lambda(M_w,\mathcal{C})$ the
least length of a loop of~$M_w$ from a nontrivial conjugacy class
\emph{not} in~$\mathcal{C}$.
Clearly,~$\lambda(M,\mathcal{C})>\sys(M)$.  By continuity, we still
have~$\lambda(M_w,\mathcal{C})> \sys(M) \geq \sys(M_w)$ for
sufficiently small~$w$.  Therefore, a systolic loop~\mbox{$\gamma_w
\subseteq M_w$} necessarily represents a class~$C \in \mathcal{C}$.

Consider a systolic loop~$\gamma\subseteq M$ representing the
class~$C$.  Recall that~$\gamma$ does not meet~$D$ and observe
that~$|\pi_w(\gamma)| = |\gamma|$ where~$\pi_w$ is the homotopy
equivalence~\eqref{e62b}.

Suppose~$\gamma$ does not pass through the singularity~$q\in\partial
D$.  Then the loop~$\gamma$ is disjoint from~$K_w$.  Then the
projection~$\pi_w(\gamma)$ remains a closed geodesic in~$M_w$, and
$|\pi_w(\gamma)|=|\gamma|$.  Thus the systolic loop~$\gamma_w$ and the
loop~$\pi_w(\gamma)$ are freely homotopic closed geodesics in~$M_w$.
By the flat strip theorem, we obtain~$|\gamma_w|=|\gamma|$ and
therefore~$\sys(M_w)=\sys(M)$.

Now assume~$q\in\gamma$.  Since the angle of~$D$ at~$q$ is greater
than~$\pi$, the rotation angle~$L_q$ (see Definition~\ref{r42})
of~$\gamma\subseteq M$ at $q$ from the side of~$D$ is also greater
than~$\pi$.  Note that the angle~$\measuredangle rqr'$ at~$q$ of the
excised exact kite~$K_w$ tends to zero.  Choosing
\[
\measuredangle rqr'<L_q-\pi,
\]
we ensure that the rotation angle at~$q$ is still greater than~$\pi$
for the projected loop \mbox{$\pi_w(\gamma)\subseteq M_w$}.  By the
local characterisation of geodesics, the projected loop
\mbox{$\pi_w(\gamma)\subseteq M_w$} is still a closed geodesic.  Since
$|\pi_w(\gamma)|=|\gamma|$, the loops~$\gamma_w$ and~$\pi_w(\gamma)$
are freely homotopic closed geodesics in~$M_w$.  By the flat strip
theorem, we conclude that \mbox{$|\gamma_w|=|\gamma|$} and
hence~$\sys(M_w)=\sys(M)$, as required.
\end{proof}

\subsection{Analysis of case ($E_2$)}

\begin{proposition} 
\label{prop:sys2}
Consider an admissible kite~$K_w$ with main diagonal~$[p,q]$
satisfying~\textnormal{($E_2$)}.  Namely,~$[p,q]$ is contained in the
interior of an edge of~$\partial D$ and~$K_w$ is an exact kite at~$p$
so that the part of every systolic loop passing through~$K_w$ is
parallel to~$[p,q]$.  If the width~$w$ of~$K_w$ is sufficiently small,
then
\[
\sys(M_w) = \sys(M).
\]
\end{proposition}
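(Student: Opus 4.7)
The strategy adapts that of Proposition~\ref{prop:sys1b} to accommodate the fact that, unlike cases $(D'_1)$, $(D''_1)$, and $(E'_1)$, the kite $K_w$ now straddles an edge of $\partial D$ and may be crossed by systolic loops from the adjacent systolic band. Since $\pi_w$ is a nonexpanding homotopy equivalence, the inequality $\sys(M_w) \leq \sys(M)$ is immediate, and we concentrate on establishing the reverse inequality for $w$ small enough.

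Let $\mathcal{C}$ denote the finite set of conjugacy classes of systolic loops of $M$, and let $\lambda(M_w,\mathcal{C})$ be the least length of a loop of $M_w$ representing a class outside $\mathcal{C}$. Since $\lambda(M,\mathcal{C}) > \sys(M)$, and by the bilipschitz continuity $M_w \to M$ as $w \to 0$ provided by Proposition~\ref{prop:local}, we have $\lambda(M_w,\mathcal{C}) > \sys(M_w)$ for $w$ sufficiently small. Any systolic loop $\gamma_w$ of $M_w$ therefore represents some class $C \in \mathcal{C}$.

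Fix such $C$ and let $\gamma \subseteq M$ be a systolic representative. The parallelism hypothesis implies that the systolic loops of $M$ meeting $K_w$ do so along segments parallel to $[p,q]$, so they all lie within a narrow strip along the edge of $\partial D$ containing $[p,q]$. When the band hosting $\gamma$ is fat, by choosing $w$ small we can pick a systolic loop $\gamma' \in C$ entirely disjoint from $K_w$; then $\gamma'$ descends unchanged to $M_w$ as a closed geodesic of length $\sys(M)$. Otherwise, exploiting the exactness of $K_w$ at the small conical singularity $p$, the angle at $p$ in $M_w$ becomes exactly $2\pi$ and $p$ is regularized; a local analysis of the rotation angles at $p$ and at $q$, using the parallelism of $\gamma \cap K_w$ with the axis $[p,q]$ and, as in case $(E'_1)$, the fact that $\measuredangle rqr' \to 0$ as $w \to 0$, shows that the projected loop $\pi_w(\gamma)$ is still a closed geodesic in $M_w$, of length $|\gamma| = \sys(M)$. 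In either situation, the flat strip theorem applied in $M_w$ yields $|\gamma_w| = \sys(M)$. Combined with $\sys(M_w) \leq \sys(M)$, this gives $\sys(M_w) = \sys(M)$ as claimed.

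The main obstacle is to verify that $\pi_w(\gamma)$ remains locally geodesic across the surgery: the exactness condition at $p$ is precisely calibrated so that identifying $[p,r]\sim[p,r']$ eliminates rather than creates a corner, while the angular condition at $q$ is recovered by shrinking $\measuredangle rqr'$, mirroring the rotation-angle argument used for the singularity $q$ in the proof of Proposition~\ref{prop:sys1b}.
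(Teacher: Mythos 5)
Your reduction to classes in $\mathcal{C}$ and your treatment of the case where the class admits a representative disjoint from $K_w$ (fat band, or an isolated loop missing the edge) match the paper. But there is a genuine gap in the only case that makes \textnormal{($E_2$)} delicate: the class $C$ whose unique (isolated) systolic representative $\gamma$ contains the segment $[p,q]$, i.e.\ runs along the very edge the kite sits on. You claim that, thanks to exactness at $p$ and $\measuredangle rqr'\to 0$, the projection $\pi_w(\gamma)$ is a closed geodesic in $M_w$ of length $|\gamma|=\sys(M)$. Both assertions fail. First, the length: $\pi_w$ collapses the kite onto the segment $I_w=\pi_w(K_w)$, which is the glued boundary $[p,r]\sim[p,r']$, $[q,r]\sim[q,r']$ and has length $|pr|+|rq|>|pq|$; since $\gamma\cap K_w=[p,q]$, one gets $|\pi_w(\gamma)|=|\gamma|+(|pr|+|rq|-|pq|)>\sys(M)$. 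Second, the geodesy: exactness at $p$ makes the cone angle at $\pi_w(p)$ equal to $2\pi$, but the projected curve has rotation angles $R_p-\tfrac12\measuredangle rpr'$ and $L_p-\tfrac12\measuredangle rpr'$ there, which are both $\geq\pi$ only in the symmetric case $R_p=L_p$; similarly at $q$ the condition $\measuredangle rqr'\leq 2(\min\{R_q,L_q\}-\pi)$ need not be achievable when a rotation angle of $\gamma$ at $q$ equals $\pi$ (e.g.\ $q$ is crossed straight, or is even regular along the edge). So in general $\pi_w(\gamma)$ is a broken curve, and the flat strip theorem cannot be applied to it; indeed no representative of $C$ of length exactly $\sys(M)$ need exist in $M_w$, so the strategy of exhibiting one cannot work. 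This is precisely where case \textnormal{($E_2$)} differs from \textnormal{($E'_1$)}: there the systolic loop meets the kite at most at the single vertex $q$, so the projection leaves its length unchanged and only a rotation-angle check at $q$ is needed; here the loop traverses the whole diagonal and is genuinely distorted.

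The paper closes this case by arguing in the opposite direction, bounding every systolic loop $\gamma_w\subseteq M_w$ from below rather than producing a short competitor in $M_w$. Using the $(1+\epsilon)$-bilipschitz map $\phi_w\colon M_w\to M$, the isolatedness and simplicity of $\gamma$, and the convexity of $I_w$ in the nonpositively curved $M_w$, one first shows that $c_w=\gamma_w\setminus I_w$ is a single open arc. One then lifts it to $\sigma_w\subseteq M$ (the closure of $\pi_w^{-1}(c_w)$) and distinguishes endpoint configurations on $\partial K_w$: if an endpoint is a main vertex, completing $\sigma_w$ by a segment inside the kite gives a loop of class $C$ in $M$ of length at most $|\gamma_w|$, whence $\sys(M)\leq\sys(M_w)$; otherwise the two endpoints of $\sigma_w$ project to the same point of the closed geodesic $\gamma$, and the nearest-point projection onto $\gamma$, which is distance-decreasing by nonpositive curvature, closes up and yields $|\gamma_w|\geq|\sigma_w|\geq|\gamma|=\sys(M)$. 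Some argument of this type (a lower bound on $|\gamma_w|$ obtained from $\gamma_w$ itself) is needed to replace your final paragraph; as written, the proposal does not establish $\sys(M_w)\geq\sys(M)$ in the edge-traversing case.
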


\begin{proof}
As in the proof of Proposition~\ref{prop:sys1b}, a systolic
loop~$\gamma_w \subseteq M_w$ necessarily represents a class~$C$ in
the set~$\mathcal{C}$ of conjugacy classes of systolic loops of~$M$.
Consider the segment~$I_w\subseteq M_w$ defined by
\[
I_w=\pi_w(K_w)=\pi_w([p,q]),
\]
where~$\pi_w\colon M\to M_w$ is the projection~\eqref{e62b}.
Let~$c_w=\gamma_w \setminus I_w$ in~$M_w$.  Consider a
systolic loop~$\gamma\subseteq M$ representing the class~$C$.

If~$\gamma$ is disjoint from the segment~$[p,q]$ (and hence from~$K_w$
if~$w$ is sufficiently small) then its projection~$\pi_w(\gamma)$
remains a closed geodesic in~$M_w$ of the same length as~$\gamma$.  By
the flat strip theorem, we conclude that~$|\gamma_w|=|\gamma|$
and~$\sys(M_w)=\sys(M)$.

Thus, we can assume that the class~$C$ contains a single (isolated)
systolic loop~$\gamma \subseteq M$, which meets the segment~$[p,q]$
and therefore must contain~$[p,q]$ by condition~\textnormal{($E_2$)}.
Now Proposition~\ref{prop:sys2} will result from the following lemma.

\begin{lemma}
Let~$\gamma_w\subseteq M_w$ be a systolic loop.
Let~$I_w=\pi_w(K_w)\subseteq M_w$ be the segment given by the image of
the kite~$K_w$.  Let~$c_w=\gamma_w \setminus I_w$.  Then
$c_w\subseteq M_w$ is a connected open segment.
\end{lemma}

\begin{proof}
Recall that the map~$\phi_w\colon M_w\to M$ of~\eqref{e65}
is~$(1+\epsilon)$-bilipschitz.  Since~$\sys(M_w) \leq \sys(M)$, the
loop~$\phi_w(\gamma_w) \subseteq M$ homotopic to~$\gamma$ is of length
at most~$(1+\epsilon)|\gamma|$.  Since the systolic loop~$\gamma
\subseteq M$ is isolated, the loop~$\phi_w(\gamma_w)$ necessarily
converges to~$\gamma$.

Since~$\gamma$ is a simple loop containing the main diagonal of~$K_w$,
the part of~$\gamma$ lying outside~$K_w\subseteq M$ necessarily
consists of a single arc for~$w$ small enough.  It follows that there
is a single subarc of~$\phi_w(\gamma_w)$ lying outside some small open
neighborhood~$U$ of~$K_w$ such that~$K_w$ is a deformation retract
of~$U$.  The image of this subarc by~$\phi_w^{-1}$ lies in an open
subarc~$\alpha_w$ of~$\gamma_w$ lying in~$M_w \setminus I_w$ with
endpoints in~$I_w$.  Since~$M$ is nonpositively curved and~$I_w$ is
convex, all the other geodesic subarcs of~$\gamma_w$ with endpoints
in~$I_w$, which lie in a small neighborhood of~$I_w$, in fact lie
in~$I_w$.  Thus,~$\alpha_w$ is the only subarc of~$\gamma_w$ lying
outside~$I_w$, that is,~$c_w = \alpha_w$.
\end{proof}

We continue with the proof of Proposition~\ref{prop:sys2}.
Let~$\sigma_w\subseteq M$ be the closure
of~$\pi_w^{-1}(c_w)$ in~$M$.  

Suppose one of the endpoints of~$\sigma_w$ is one of the main vertices
of the kite, say~$p$.  Let~$y$ be the other endpoint.  The
segment~$[p,y]\subseteq K_w$ projects to the path of~$I_w\subseteq
M_w$ connecting~$\pi_w(p)$ and~$\pi_w(y)$.  Then the
loop~$\bar\gamma_w=\sigma_w \cup [p,y] \subseteq M$ in the homotopy
class~$C$ satisfies~$|\bar\gamma_w| \leq |\gamma_w|$.  Thus,~$\sys(M)
\leq \sys(M_w)$, providing the required bound.  Therefore, we can
assume that the endpoints of~$\sigma_w$ are disjoint
from~$\{p,q\}\subseteq M$.

Suppose one of the endpoints of the path~$\sigma_w$ in~$\partial
K_w\subseteq M$ is a point other than~$r$ and~$r'$.  In such case, the
minimizing loop~$\gamma_w\subseteq M_w$ meets the interval~$I_w$
transversely at a regular (i.e., non-singular) point of~$M_w$.  It
follows that the endpoints of~$\sigma_w$ project to the same point on
the closed geodesic~$\gamma\subseteq M$.  Hence the nearest-point
projection of~$\sigma_w$ to~$\gamma$ closes up.  By the assumption of
nonpositive curvature, the projection map is distance-decreasing.
Therefore~$|\gamma_w|\geq|\sigma_w|\geq|\gamma|=\sys(M)$ in this case,
as well.

Thus we can assume that the endpoints of~$\sigma_w$ are the
points~$r,r'\in M$.  In this case also the nearest-point projection
of~$\sigma_w$ to the loop~$\gamma\subseteq M$ closes up.
Hence~$|\gamma_w|\geq|\sigma_w|\geq|\gamma|=\sys(M)$, proving the
proposition.
\end{proof}

\section{Exploiting the kite excision trick} \label{sec:exploit}

We proceed to the proof of the existence of nonpositively curved
piecewise flat locally extremal metrics on every genus~$g$ surface.  

Recall that a local infimum of the systolic area on the
space~$\mathcal{H}_g$ of nonpositively curved Riemannian metrics
(possibly with conical singularities) on a genus~$g$ surface is a real
number~$\mu > 0$ such that there exists an open
set~$\mathcal{U}\subseteq \mathcal{H}_g$ satisfying a strict
inequality
\begin{equation}
\mu = \inf_{M \in \mathcal{U}} \sigma(M) < \inf_{M \in \partial
\mathcal{U}} \sigma(M),
\end{equation}
as in Definition~$\ref{def:local}$.  Recall that~$\bar Q(g)$ is the
maximal number of systolic homotopy classes; see
Proposition~\ref{item:kissing3}.

\begin{theorem}
\label{theo:bound}
Let~$\mathcal{U}$ be an open set in the space~$\mathcal{H}_g$ of
nonpositively curved genus~$g$ surfaces (possibly with conical
singularities) defining a local infimum.  Then there exists a
nonpositively curved piecewise flat metric~$\G_0$ in~$\mathcal{U}$
with at most
\[
\mathcal{N}_0 = 20 \,\bar{Q}(g)^2 \leq 2^{25} g^4 \log^2(g)
\]
conical singularities whose systolic area is the infimum of the
systolic area of any nonpositively curved Riemannian
metric~$\G\in\mathcal{U}$ with conical singularities,
\ie,~$\sigma(\G_0) \leq \sigma(\G)$.
\end{theorem}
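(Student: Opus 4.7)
The plan is to combine the kite excision trick of Section~\ref{six} with the compactness result of Proposition~\ref{prop:compact} to extract a piecewise flat extremizer with few singularities. First, using Proposition~\ref{prop:toponogov} together with Corollary~\ref{coro:approx} and the openness of $\mathcal{U}$, I would replace any minimizing sequence by a minimizing sequence $(\G_n)$ of nonpositively curved piecewise flat metrics lying in $\mathcal{U}$ with $\sigma(\G_n)\to\mu$. Each $\G_n$ admits the systolic decomposition of Section~\ref{sec:decomp}, with at most $\bar{Q}(g)$ systolic bands and at most $\mathcal{N} = 4\bar{Q}(g)^2$ nonsystolic domains by Proposition~\ref{prop:nonsystolic domains}.

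Next, I would iteratively reduce each $\G_n$ via the kite excision trick. Whenever a nonsystolic domain $D$ contains two conical singularities $p,q$ joined by a geodesic arc $[p,q]$, I would look for an admissible kite $K_w$ with main diagonal $[p,q]$ matching one of the configurations $(D_1),(D'_1),(D''_1),(E_1),(E'_1)$; and if $[p,q]$ lies on an edge of $\partial D$ with $p$ small, configuration $(E_2)$. Propositions~\ref{prop:sys1}, \ref{prop:sys1b}, and~\ref{prop:sys2} guarantee that for sufficiently small width the excised surface $M_w$ satisfies $\sys(M_w)=\sys(M)$, while $\area(M_w)<\area(M)$ by Definition~\ref{def:excision}. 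Choosing the kite exact at $p$ whenever possible ensures by Proposition~\ref{prop:same} that the singularity count does not increase (indeed, the main vertex $p$ ceases to be singular). By Proposition~\ref{prop:conv}, the excised surface lies arbitrarily close to the original in the Lipschitz topology, hence remains in $\mathcal{U}$. Iterating yields an \emph{irreducible} metric $\tilde{\G}_n \in \mathcal{U}$ with $\sigma(\tilde{\G}_n) \leq \sigma(\G_n)$.

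The key combinatorial step is then to show that an irreducible metric has at most $\mathcal{N}_0 = 20\bar{Q}(g)^2$ conical singularities. Since no excision is possible, each nonsystolic domain contains at most a constant number of interior conical singularities, for otherwise some admissible kite of type $(D_1)$ or $(E_1)$ would exist. A similar case analysis, using configurations $(D'_1), (D''_1), (E'_1), (E_2)$, bounds the number of singularities lying on the boundary of each nonsystolic domain (either at vertices of the polygonal structure or in the interiors of its edges). Combining with the bounds of Proposition~\ref{prop:nonsystolic domains} on the number of nonsystolic domains, edges, and special singularities, all controlled by $\mathcal{N} = 4\bar{Q}(g)^2$, yields the total bound $\mathcal{N}_0 = 20\bar{Q}(g)^2 \leq 2^{25} g^4 \log^2(g)$.

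Finally, I would normalize $\tilde{\G}_n$ to have unit systole; its area stays bounded since $\sigma(\tilde{\G}_n) \to \mu$. By Proposition~\ref{prop:compact}, after passing to a subsequence, $\tilde{\G}_n$ converges to a nonpositively curved piecewise flat metric $\G_0$ with at most $\mathcal{N}_0$ singularities and $\sigma(\G_0) = \mu$. Because $\mu < \inf_{\partial \mathcal{U}} \sigma$, the limit cannot lie on $\partial \mathcal{U}$, so $\G_0 \in \mathcal{U}$ and achieves the infimum. I expect the main obstacle to be the combinatorial step: verifying that the six excision scenarios $(D_1)$–$(E_2)$ really do exhaust all configurations of pairs of singularities arising in an almost-irreducible metric, and that the admissibility conditions on the angles at the main vertices (and on the rotation angles of systolic loops crossing the kite) can always be satisfied for small enough width. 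This is where the rotation angle analysis of Definition~\ref{r42} and the convexity properties of nonsystolic domains from Proposition~\ref{prop:nonsystolic domains} must be combined carefully.
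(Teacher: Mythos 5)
Your overall architecture (approximate by piecewise flat metrics, use the kite excision to control singularities, then apply Proposition~\ref{prop:compact} and the strict inequality $\mu<\inf_{\partial\mathcal U}\sigma$ to extract $\G_0\in\mathcal U$) matches the paper, but the central step --- ``iterating yields an \emph{irreducible} metric $\tilde\G_n$'' --- has a genuine gap. A single kite excision does not decrease the number of conical singularities: for a kite exact at the small singularity $p$, the point $p$ indeed becomes regular in $M_w$, but the identified point $r=r'$ becomes a new conical singularity of angle $2\pi+\measuredangle rpr'+\measuredangle rqr'$ (Proposition~\ref{prop:same} only gives ``at most as many''), and the diamond cases $(D_1),(D'_1),(D''_1)$ can even increase the count by one. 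So the excision merely moves the singularity closer to $q$; as long as a nonsystolic domain contains a small singularity together with another one, an admissible exact kite of small width always exists, and your reduction process never terminates in finitely many steps. Reaching your ``irreducible'' state would require an infinite sequence of excisions together with a limit argument (keeping the surface in $\mathcal U$, controlling the systole uniformly, and ensuring the limit is piecewise flat), none of which is supplied, and the widths allowed by Propositions~\ref{prop:sys1}--\ref{prop:sys2} shrink with each step, so the area gain per step is not bounded below.

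The paper avoids iteration entirely by reversing the order of the two ingredients: it first fixes $N$ (the number of singularities of the piecewise flat approximation of a given $\G\in\mathcal U$) and uses Proposition~\ref{prop:compact} to produce an \emph{exact} minimizer $\G_1$ of the systolic area among nonpositively curved piecewise flat metrics in $\mathcal U$ with at most $N$ singularities. Only then is the kite excision used, as a variational contradiction: if some nonsystolic domain of $\G_1$ (or the interior of an edge) contained a small singularity plus another singularity, a single exact excision of type $(E_1)$, $(E'_1)$ or $(E_2)$ would give a metric in $\mathcal U$ with at most $N$ singularities, the same systole and strictly smaller area, contradicting the minimality of $\G_1$ (Lemmas~\ref{lem:int} and~\ref{lem:edge}). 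Hence the bound on singularities holds for $\G_1$ itself --- at most one small nonspecial singularity per nonsystolic domain and per edge interior, plus the special and large ones counted via Proposition~\ref{prop:nonsystolic domains} and Lemma~\ref{lem:large}, giving $\mathcal N_0=20\,\bar Q(g)^2$ --- and a second application of Proposition~\ref{prop:compact}, now in the class of metrics with at most $\mathcal N_0$ singularities, produces $\G_0$ independent of the initial $\G$. Your ``for otherwise some admissible kite would exist'' step only yields a contradiction against such an exact minimizer, not against a term of a minimizing sequence; to repair your argument you would essentially have to insert the paper's intermediate minimizer $\G_1$, at which point the two proofs coincide.
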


\begin{proof}
Let~$\mathcal{G}\in\mathcal{U}$ be a nonpositively curved Riemannian
metric with conical singularities such that
\begin{equation} \label{eq:strict}
\sigma(\G) < \inf_{\partial \mathcal{U}} \sigma.
\end{equation}
By metric approximation (see~Proposition~\ref{prop:approx}) we can
assume that~$\mathcal{G}$ is a nonpositively curved piecewise flat
metric with conical singularities.  Denote by~$N$ the number of
conical singularities of~$\G$.  By compactness
(see~Proposition~\ref{prop:compact}) and the strict
inequality~\eqref{eq:strict}, there exists a metric~$\G_1$
in~$\mathcal{U}$ with minimal systolic area among all nonpositively
curved piecewise flat metrics in~$\mathcal{U}$ with at most~$N$
conical singularities.  By Proposition~\ref{prop:nonsystolic domains}
and Lemma~\ref{lem:large}, the metric~$\G_1$ (as any nonpositively
curved piecewise flat metric on~$M$) has at most
\begin{equation}
\label{e83}
\mathcal{N} = 4 \, \bar{Q}(g)^2 \leq 2^{22} g^4 \log^2(g)
\end{equation}
special conical singularities and large
conical singularities.

\medskip

It remains to find a similar upper bound on the number of small
nonspecial conical singularities for~$\G_1$ by relying on its
local extremality among all nonpositively curved piecewise flat surfaces
of~$\mathcal{U}$ with at most~$N$ singularities.  From now on, the
surface~$M$ will be endowed with the metric~$\G_1$.  Recall that the
(small) nonspecial conical singularities lie in (the closure of) the
nonsystolic domains of~$M$.

\medskip

The next pair of lemmas exploiting the kite excision trick provide
such an upper bound.

\begin{lemma}
\label{lem:int}
Let~$M$ be a local extremum of the systolic area relative to an open
set~$\mathcal U\subseteq \mathcal{H}_g$ among all nonpositively curved
piecewise flat genus~$g$ surfaces in~$\mathcal{U}$ with at most~$N$
conical singularities as in \eqref{e83}.  Then every nonsystolic
domain~$D\subseteq M$ contains at most one small conical singularity.
\end{lemma}

\begin{proof}
We argue by contradiction.  Suppose~$p$ and~$q'$ are two conical
singularities in~$D$ with~$p$ small.  Let~$[p,q']$ be a
length-minimizing arc in the closure of~$D$ joining the two points.
We consider the following two cases.
\begin{enumerate}
\item
If~$[p,q']$ lies in the open domain~$D$, we denote by~$q$ the first
conical singularity along~$(p,q']$ from~$p$.  
\item
Otherwise, the arc~$[p,q']$ meets~$\partial D$, and the first point of
intersection of~$[p,q']$ with~$\partial D$ from~$p$ is a point,
denoted~$q$, at which~$D$ is strictly concave.
\end{enumerate}
In the second case, the angle of~$D$ at~$q$ is greater than~$\pi$,
which shows that~$q$ is a conical singularity.  

In either case, we apply the kite excision trick to~$[p,q]$ with an
exact kite~$K_w$ at~$p$, see~Definition~\ref{def:kite}, of width~$w$
small enough to satisfy~($E_1$) in the first case (when~$q$ lies
in~$D$) and ($E'_1$) in the second case (when~$q$ lies in~$\partial
D$); see~Section~\ref{sec:comparison}.  We also choose~$w$ small
enough to ensure that the resulting piecewise flat surface~$M_w$ lies
in~$\mathcal{U}$; see~Proposition~\ref{prop:local}.  By
Proposition~\ref{prop:same}, the surface~$M_w$ is nonpositively curved
and has no more conical singularities than~$M$.  By
Proposition~\ref{prop:sys1}, the systole of~$M_w$ is equal to the
systole of~$M$.  As the area of~$M_w$ is less than the area of~$M$,
this contradicts the local extremality of~$M$ among all nonpositively
curved piecewise flat genus~$g$ surfaces in~$\mathcal{U}$ with at
most~$N$ conical singularities.
\end{proof}

\begin{lemma} 
\label{lem:edge}
Let~$M$ be a local extremum of the systolic area relative to an open
set~$\mathcal U\subseteq \mathcal{H}_g$ among all nonpositively curved
piecewise flat genus~$g$ surfaces in~$\mathcal{U}$ with at most~$N$
conical singularities as in \eqref{e83}.  Then the interior of every
edge~$\mathcal{E}$ of a nonsystolic domain~$D$ of~$M$ contains at most
one small conical singularity.
\end{lemma}

\begin{proof}
We argue by contradiction.  Let~$p$ be a small conical singularity in
the interior of the edge~$\mathcal{E}$.  Let~$q$ be a conical
singularity in the interior of~$\mathcal{E}$ adjacent to~$p$.  Note
that the conical singularity~$q$ may be large.  Apply the kite
excision trick to~$[p,q]$ with an exact kite~$K_w$ at~$p$
(see~Definition~\ref{def:kite}) of width~$w$ small enough to
satisfy~($E_2$) (see~Section~\ref{sec:comparison}) and to ensure that
the resulting piecewise flat surface~$M_w$ lies in~$\mathcal{U}$;
see~Proposition~\ref{prop:local}.  We obtain a contradiction by
arguing as in the proof of Lemma~\ref{lem:int} by applying
Proposition~\ref{prop:sys2}.
\end{proof}

\begin{remark}
Technically speaking, we show stronger results in the proofs of the
two previous lemmas.  Namely, if a small conical singularity lies in a nonsystolic domain of~$M$ then this
domain contains no other conical singularity (small or large).
Similarly, if a small conical singularity lies in the interior of an
edge of~$M$ then the interior of this edge contains no other conical
singularity (small or large).
\end{remark}

We conclude the proof of Theorem~\ref{theo:bound} as follows.
Proposition~\ref{prop:nonsystolic domains} provides an upper bound on the total number of nonsystolic domains and edges (and so vertices).
Combined with Lemma~\ref{lem:int} and Lemma~\ref{lem:edge}, this shows that the surface~$M$ has at most~$3 \, \mathcal{N}$ small nonspecial conical singularities.
Along with our previous estimates on the number of special and large conical singularities, this shows that the metric~$\G_1$ has at most
\[
\mathcal{N}_0 = 5 \, \mathcal{N} = 20 \, \bar{Q}(g)^2 \leq 2^{25} g^4 \log^2(g)
\]
conical singularities.
In particular, the open set~$\mathcal{U}$ contains nonpositively curved piecewise flat metrics on~$M$ with at most~$\mathcal{N}_0$ conical singularities.

By compactness (see~Proposition~\ref{prop:compact}), and since the
systolic area of~$\G_1$ is less than~$\inf_{\partial \mathcal{U}}
\sigma$, there exists a metric~$\G_0$ in~$\mathcal{U}$ with minimal
systolic area among all nonpositively curved piecewise flat metrics
in~$\mathcal{U}$ with at most~$\mathcal{N}_0$ conical singularities.
By definition, the metric~$\G_0$ does not depend on~$\G$ (nor
on~$\G_1$) and satisfies
\[
\sigma(\G_0) \leq \sigma(\G_1) \leq \sigma(\G)
\]
for every nonpositively curved Riemannian metric~$\G$ with conical singularities in~$\mathcal{U}$.
\end{proof}

We immediately deduce the existence of locally extremal nonpositively
curved piecewise flat metrics on every genus~$g$ surface.

\begin{corollary} 
\label{coro:extremal}
Every local infimum of the systolic area on the space~$\mathcal{H}_g$
of nonpositively curved genus~$g$ surfaces (possibly with conical
singularities) is attained by a nonpositively curved piecewise flat
metric.
\end{corollary}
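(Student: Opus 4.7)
The plan is to derive the corollary directly from Theorem~\ref{theo:bound}, which has just been established and already contains all the substantive information needed. Let $\mu$ be a local infimum of the systolic area on $\mathcal{H}_g$ realized by an open set $\mathcal{U} \subseteq \mathcal{H}_g$ in the sense of Definition~\ref{def:local}, so that the strict inequality
\[
\mu = \inf_{M \in \mathcal{U}} \sigma(M) < \inf_{M \in \partial \mathcal{U}} \sigma(M)
\]
holds. This is precisely the hypothesis of Theorem~\ref{theo:bound}, so I would invoke that theorem to produce a nonpositively curved piecewise flat metric $\G_0 \in \mathcal{U}$ (with at most $\mathcal{N}_0$ conical singularities) satisfying $\sigma(\G_0) \leq \sigma(\G)$ for every nonpositively curved Riemannian metric $\G \in \mathcal{U}$ with conical singularities.

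To complete the proof it suffices to verify that $\sigma(\G_0) = \mu$. Since $\G_0 \in \mathcal{U} \subseteq \mathcal{H}_g$, the definition of infimum gives $\sigma(\G_0) \geq \mu$. Conversely, the inequality $\sigma(\G_0) \leq \sigma(\G)$ supplied by Theorem~\ref{theo:bound}, applied to every $\G \in \mathcal{U}$ (the class over which $\mu$ is taken is exactly $\mathcal{H}_g \cap \mathcal{U}$), yields $\sigma(\G_0) \leq \mu$. Combining the two bounds gives $\sigma(\G_0) = \mu$, so $\G_0$ is a nonpositively curved piecewise flat metric in $\mathcal{U}$ attaining the local infimum, as required.

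There is no genuine obstacle to this argument: the corollary is essentially a repackaging of part of the conclusion of Theorem~\ref{theo:bound}, isolating the attainability statement from the quantitative bound $\mathcal{N}_0$ on the number of conical singularities. All the real work, namely the kite excision trick of Section~\ref{six} used to eliminate extra conical singularities and the compactness result Proposition~\ref{prop:compact} used to extract a minimizer among piecewise flat metrics with a bounded number of singularities, has already been carried out in the proof of Theorem~\ref{theo:bound}.
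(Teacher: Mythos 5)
Your proposal is correct and matches the paper, which states this corollary as an immediate consequence of Theorem~\ref{theo:bound} without further argument. Your explicit verification that $\sigma(\G_0)=\mu$ (combining $\G_0\in\mathcal{U}$ with the minimality of $\sigma(\G_0)$ over all metrics in $\mathcal{U}$) is exactly the routine step the paper leaves implicit.
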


\section
{Shape and singularities of nonsystolic domains} \label{sec:shape}

In this section, we provide a more precise description of nonsystolic
domains of a locally extremal nonpositively curved surface~$M$ of
genus~$g$, whose existence was established in
Corollary~\ref{coro:extremal}.  We also show that the systolic part
of~$M$ is connected.

\begin{lemma} 
\label{lemma:one}
Every nonsystolic domain~$D$ of a locally extremal surface~$M$
contains at most one conical singularity.
\end{lemma}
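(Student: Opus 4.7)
The plan is to argue by contradiction, supposing that some nonsystolic domain~$D$ of~$M$ contains two distinct conical singularities~$p$ and~$q'$. If at least one of them is small, the conclusion follows from Lemma~\ref{lem:int} together with the remark following Lemma~\ref{lem:edge}, which strengthens that lemma to say that a nonsystolic domain containing a small conical singularity contains no other conical singularity at all. We are thus reduced to the case in which both~$p$ and~$q'$ are large.

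To handle this remaining case, we mimic the proof of Lemma~\ref{lem:int}, replacing the exact kite by a diamond kite. Let~$[p,q']$ be a length-minimizing arc in~$\overline{D}$, and choose~$q$ in the two-case manner of Lemma~\ref{lem:int}: either~$q$ is the first conical singularity along~$(p,q']$ when~$[p,q] \subseteq D$, or~$q$ is the first point of intersection of~$[p,q']$ with~$\partial D$, in which case~$q$ is a conical singularity because~$D$ is strictly concave there. Apply the kite excision trick to~$[p,q]$ with an admissible diamond kite~$K_w$ of small width~$w$, falling into configuration~\textnormal{($D_1$)},~\textnormal{($D'_1$)}, or~\textnormal{($D''_1$)} of Section~\ref{sec:comparison}, according to whether~$p$ and~$q$ lie in~$D$ or in~$\partial D$. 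Admissibility at the large vertices is automatic for~$w$ small, since the opening angles~$\measuredangle rpr'$ and~$\measuredangle rqr'$ tend to zero with~$w$, while~$\theta_p - 2\pi \geq \pi$ and~$\theta_q - 2\pi \geq \pi$.

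By Proposition~\ref{prop:same} the excised surface~$M_w$ is nonpositively curved, and by Proposition~\ref{prop:local} it lies in~$\mathcal{U}$ for~$w$ sufficiently small. Proposition~\ref{prop:sys1} (in case~\textnormal{($D_1$)}) or Proposition~\ref{prop:sys1b} (in cases~\textnormal{($D'_1$)} and~\textnormal{($D''_1$)}) gives~$\sys(M_w) = \sys(M)$, while the excision strictly decreases the area, so~$\sigma(M_w) < \sigma(M)$. This contradicts the local extremality of~$M$ and closes the argument.

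The conceptual point distinguishing this lemma from the proof of Theorem~\ref{theo:bound} is that a diamond kite excision may produce one additional conical singularity at the merged point~$r=r'$. This was not permitted in the proof of Theorem~\ref{theo:bound}, where the intermediate extremizer~$\G_1$ was only known to minimize among surfaces with at most~$N$ singularities and hence only exact kites could be applied; but it is harmless here because the locally extremal surface~$M$ furnished by Corollary~\ref{coro:extremal} is extremal within all of~$\mathcal{H}_g$, with no bound on the singularity count. The main technical point to watch is therefore not a new estimate but the verification that admissibility, the convergence hypothesis of Proposition~\ref{prop:local}, and the systole-preservation hypotheses of Propositions~\ref{prop:sys1} and~\ref{prop:sys1b} can be met simultaneously by a single choice of~$w$ in each of the three positional cases.
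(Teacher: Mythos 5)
Your proof is correct and is essentially the paper's own argument: the paper likewise chooses~$q$ by the two-case rule of Lemma~\ref{lem:int} and excises an admissible \emph{diamond} kite in configuration~\textnormal{($D_1$)} or~\textnormal{($D'_1$)}, invoking Propositions~\ref{prop:same}, \ref{prop:local} and \ref{prop:sys1}/\ref{prop:sys1b}, and noting exactly as you do that the possible extra singularity at~$r=r'$ is harmless because~$M$ is extremal in all of~$\mathcal{U}$. Your preliminary reduction via Lemma~\ref{lem:int} to the case where both singularities are large is unnecessary (the diamond kite is admissible at any nonpositively curved conical point once~$w$ is small, and since~$p\in D$ the case~\textnormal{($D''_1$)} does not occur), but it is harmless.
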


\begin{proof}
We argue as in the proof of Lemma~\ref{lem:int}, relying now on the
local extremality of~$M$.  Assume by contradiction that there are two
conical singularities~$p$ and~$q'$ in~$D$.  If the length-minimizing
arc~$[p,q']$ joining~$p$ to~$q'$ in the closure of~$D$ lies in~$D$, we
denote by~$q$ the first conical singularity along~$(p,q']$ from~$p$.
Otherwise, the arc~$[p,q']$ meets~$\partial D$ and its first point of
intersection (from~$p$) is a conical singularity, denoted~$q$.  

In either case, take an admissible diamond~$K_w$ with diagonal~$[p,q]$
of width~$w$ small enough to satisfy~($D_1$) in the former case
and~($D'_1$) in the latter case; see~Section~\ref{sec:comparison}.
Apply the kite excision trick to~$K_w$.  The resulting piecewise flat
surface~$M_w$ may have more conical singularities than~$M$, but is
still nonpositively curved; see~Proposition~\ref{prop:same}.  Taking
the width of~$K_w$ small enough as in Proposition~\ref{prop:local}, we
can further ensure that the surface~$M_w$ lies in the open
set~$\mathcal{U}$ of~$\mathcal{H}_g$ involved in the definition of a
locally extremal nonpositively curved metric on~$M$;
see~Definition~\ref{def:local}.  By Proposition~\ref{prop:sys1}, the
systole of~$M_w$ is greater or equal to the systole of~$M$.  As the
area of~$M_w$ is less than the area of~$M$, this contradicts the local
extremality of~$M$ among all nonpositively curved piecewise flat
genus~$g$ surfaces in~$\mathcal{U}$, establishing the lemma.
\end{proof}

\begin{lemma}
\label{l92}
A nonsystolic non-simply-connected domain~$D$ of a locally extremal
surface~$M$ is necessarily convex.
\end{lemma}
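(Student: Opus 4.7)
\textit{Proof proposal.} I argue by contradiction and assume that $D$ is not convex. By Proposition~\ref{prop:nonsystolic domains}(1), corners of $D$ at nonsingular points have interior angles at most $\pi$, so the non-convexity must occur at a conical singularity $q \in \partial D$ at which the interior angle of $D$ satisfies $\alpha_q > \pi$.

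The plan is to produce a kite excision that contradicts the local extremality of $M$, exactly as in the proof of Lemma~\ref{lemma:one}. If $D$ contains an interior conical singularity $p$, I join $p$ to $q$ by the length-minimizing arc $[p,q] \subseteq \overline{D}$ (whose interior lies in $D$) and place an admissible diamond kite $K_w$ on $[p,q]$ of sufficiently small width to satisfy case $(D'_1)$ of Section~\ref{sec:comparison}. By Propositions~\ref{prop:same}, \ref{prop:sys1b}, and~\ref{prop:local}, for $w$ small enough the excised surface $M_w$ is nonpositively curved, lies in $\mathcal{U}$, has $\sys(M_w) = \sys(M)$, and has strictly smaller area than $M$, contradicting local extremality.

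If $D$ contains no interior conical singularity, the non-simple-connectedness of $D$ is used to find a second concave vertex suitable for the kite excision. Specifically, since $\pi_1(\overline{D}, q)$ is nontrivial, and since the angular room $\alpha_q > \pi$ at $q$ allows geodesic rays from $q$ to enter the interior of $D$ in many directions, there exists a shortest noncontractible geodesic loop $\gamma \subseteq \overline{D}$ based at $q$ whose interior avoids $\partial D$ or touches it only at conical singularities. If $\gamma$ passes through another conical singularity $q' \in \partial D$ at which the rotation angle on $D$'s side is strictly greater than $\pi$, I apply the kite excision of case $(D''_1)$ with main diagonal the subarc of $\gamma$ from $q$ to $q'$. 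Otherwise $\gamma$ is a geodesic loop based at $q$ with interior strictly inside $D$, and I apply the degenerate $p = q$ variant of case $(D''_1)$ (explicitly allowed in Section~\ref{sec:comparison}), taking $\gamma$ itself as the main diagonal. Either way, the same kite-excision argument as in the first case yields the contradiction.

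The main obstacle will be the degenerate $p = q$ subcase, where the ``kite'' of Definition~\ref{def:kite} has a closed geodesic loop as main diagonal. One must verify that the construction of the excised surface in Definition~\ref{def:excision} and the bilipschitz control of Proposition~\ref{prop:local} extend to this setting. I would handle this by lifting to a universal cover of a tubular neighborhood of $\gamma$ in $D$, performing the kite construction there using the angular room $\alpha_q > \pi$ at $q$, and projecting back; the resulting surface $M_w$ remains nonpositively curved and piecewise flat, and the systole-preservation claim of Proposition~\ref{prop:sys1b} still applies since $\gamma$ being a shortest noncontractible loop in $\overline{D}$ prevents systolic loops, which lie outside $D$, from being affected by the local surgery.
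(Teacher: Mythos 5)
Your proposal follows essentially the same route as the paper: from non-convexity get a concave (hence singular) boundary vertex, use non-simple-connectedness to take a length-minimizing noncontractible loop in $\overline D$ based at it, extract a geodesic diagonal with interior in $D$ and concave singular endpoints (possibly coinciding), and excise a small admissible diamond in case ($D'_1$) or ($D''_1$) to contradict local extremality exactly as in Lemma~\ref{lemma:one}. The paper runs this as a single case, relying on the convention of Section~\ref{sec:comparison} that the diagonal may close up ($p=q$) in case ($D''_1$), so your separate interior-singularity case and your universal-cover treatment of the degenerate kite are consistent with, just slightly more elaborate than, the published argument.
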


\begin{proof}
Assume that~$D$ is nonconvex.  Then there is a conical
singularity~$x\in\partial D$ where the angle of~$D$ is greater
than~$\pi$.  Consider a length-minimizing noncontractible
loop~$\gamma$ based at~$x$ in the closure of~$D$, which contains
an arc~$[p,q]$ with~$p,q \in \partial D$, whose interior~$(p,q)$ lies
in~$D$.  The angles of~$D$ at the points~$p$ and~$q$ are greater
than~$\pi$, which implies that these two points are conical
singularities.  Note that the points~$p$ and~$q$ may agree.  We apply
the kite excision trick to an admissible diamond with diagonal~$[p,q]$
of width small enough to satisfy~($D'_1$), or~($D''_1$) if~$p=q$, and
derive a contradiction as in the proof of Lemma~\ref{lemma:one}.  This
shows that such a domain~$D$ must be convex.
\end{proof}

\begin{proposition} 
\label{prop:one}
Every nonsystolic domain~$D$ of a locally extremal surface~$M$ is
homeomorphic to a disk.
\end{proposition}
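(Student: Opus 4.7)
The approach is to argue by contradiction: suppose $D$ is not homeomorphic to a disk, hence not simply connected. By Lemma~\ref{l92}, $D$ is then convex in $M$. The idea is to extract from the nontrivial topology of $D$ a closed geodesic $\gamma\subseteq\overline{D}$ and then contradict the local extremality of $M$ by a kite excision applied near $\gamma$.

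To produce $\gamma$, I would pass to the universal cover $\widetilde{M}$, which is $\CAT(0)$. Convexity of $D$ yields a convex, hence simply connected, lift $\widetilde{D}\subseteq\widetilde{M}$ on which $\pi_1(D)$ acts freely; in particular the inclusion $\pi_1(D)\hookrightarrow\pi_1(M)$ is injective. Any nontrivial $\alpha\in\pi_1(D)$ then acts on $\widetilde{M}$ as a hyperbolic isometry whose translation axis is forced inside $\widetilde{D}$ by convexity, and projects to a closed geodesic $\gamma\subseteq\overline{D}$. Consequently $|\gamma|\geq\sys(M)$. In case of equality, $\gamma$ would be a systolic loop, lying either in $D$ (impossible since $D$ is nonsystolic) or in $\partial D$; in the latter case, the flat strip theorem applied to $\gamma$ together with a parallel closed geodesic obtained from a curve in $D$ freely homotopic to $\gamma$ exhibits systolic loops in the interior of $D$, again a contradiction. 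Hence $|\gamma|>\sys(M)$.

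Next I would set up a kite excision. By Lemma~\ref{lemma:one}, $D$ contains at most one conical singularity in its interior, while by Proposition~\ref{prop:nonsystolic domains}(1) every corner of $\partial D$ at a smooth point of $M$ has interior angle at most $\pi$. Combining these with the Gauss--Bonnet identity on $\overline{D}$, the non-disk condition $\chi(\overline{D})\leq 0$ forces either the presence of an interior conical singularity or enough angle defect on $\partial D$ to produce conical singularities of $M$ along $\partial D$. I would then select two conical singularities $p,q\in\overline{D}$ joined by a geodesic arc $[p,q]\subseteq\overline{D}$ with no further singularity in its interior, and verify that the pair fits one of the six admissible configurations $(D_1)$, $(D_1')$, $(D_1'')$, $(E_1)$, $(E_1')$, or $(E_2)$ of Section~\ref{sec:comparison}. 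By Propositions~\ref{prop:sys1}, \ref{prop:sys1b} and~\ref{prop:sys2}, the corresponding excised surface $M_w$ lies in the open set $\mathcal{U}$ of Definition~\ref{def:local} with $\sys(M_w)=\sys(M)$ and $\area(M_w)<\area(M)$, contradicting the local extremality of $M$.

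The main obstacle is verifying that a suitable pair $(p,q)$ and segment $[p,q]$ can always be located, and matching them to one of the six kite cases, whose hypotheses are restrictive: exact kites require $p$ to be small, the primed variants require the interior angle of $D$ at $q$ to exceed $\pi$, and $(E_2)$ requires $[p,q]$ to lie in the interior of a single edge of $\partial D$. A careful case analysis is required depending on whether $D$ is an annulus, has higher genus, or has several boundary components, and according to whether or not $D$ has an interior conical singularity. In particular, the case of a flat annular $D$ with no interior singularity must be ruled out separately: the closed geodesic $\gamma$ extends via the flat strip theorem to a maximal flat strip foliated by parallel closed geodesics of length $|\gamma|$ contained in $\overline{D}$ by convexity, and this forces both boundary components of $D$ to be systolic closed geodesics bounding a systolic band inside $\overline{D}$, contradicting the nonsystolic nature of $D$.
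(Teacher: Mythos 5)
There is a genuine gap, and it is exactly where you flag your ``main obstacle''. After Lemma~\ref{l92} reduces to a convex, non-simply-connected $D$, the hard case is when $\overline D$ contains a conical singularity, and your plan --- find \emph{two} conical singularities $p,q\in\overline D$ joined by a geodesic arc fitting one of the six configurations of Section~\ref{sec:comparison} --- can fail outright. By Lemma~\ref{lemma:one} the domain $D$ contains at most one singularity, and Gauss--Bonnet is perfectly consistent with the configuration in which this single singularity $p$ is the \emph{only} singularity in all of $\overline D$: the corners of $\partial D$ are intersection points of systolic loops, which are typically regular points of $M$ (cf.\ Proposition~\ref{prop:nonsystolic domains}), so convex corners supply the turning angle needed by Gauss--Bonnet without creating any second singularity. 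In that situation there is no pair $p\neq q$ to excise between. The paper's proof resolves this with an idea absent from your proposal: take the diagonal of an admissible \emph{diamond} to be a length-minimizing noncontractible geodesic loop in $D$ based at $p$, i.e.\ the degenerate case $p=q$ explicitly allowed at the beginning of Section~\ref{sec:comparison}, and apply case~($D_1$) together with Propositions~\ref{prop:same} and~\ref{prop:sys1} to contradict local extremality. (For this proposition diamonds suffice throughout; exactness and smallness of $p$ matter only when one must also control the \emph{number} of singularities, as in Theorem~\ref{theo:bound}, so that part of your case analysis is a red herring.) Once interior singularities are excluded, Gauss--Bonnet with convex geodesic boundary gives $\chi(D)\geq 0$, so the only non-disk possibility is a flat cylinder with smooth geodesic boundary; there is no residual ``higher genus or several boundary components'' case.

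Your treatment of that remaining cylinder case is also not correct as written. A flat annulus with geodesic boundary is a metric product, so the core geodesic $\gamma$ and both boundary circles have the \emph{same} length; you cannot simultaneously assert $|\gamma|>\sys(M)$ (your second paragraph) and that the boundary components are systolic loops (your final sentence). Moreover, in your equality sub-case the appeal to the flat strip theorem for ``$\gamma$ together with a parallel closed geodesic obtained from a curve in $D$ freely homotopic to $\gamma$'' is unjustified: a freely homotopic curve in $D$ need not be a closed geodesic, and $\gamma$ may be the unique geodesic in its class, so no strip is produced. The paper instead disposes of the cylinder directly: its boundary loops are systolic geodesics, every closed geodesic not parallel to the boundary is strictly longer than $\sys(M)$, and slightly shrinking the height of the cylinder decreases the area while preserving the systole and nonpositive curvature, contradicting local extremality. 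Your universal-cover construction of $\gamma$ is fine but ultimately superfluous to either of the two cases that actually occur.
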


\begin{proof}
Arguing by contradiction, we suppose that~$D\subseteq M$ is nonsimply
connected.  By Lemma~\ref{l92},~$D$ must be convex.  Assume that~$D$
contains a conical singularity~$p$.  By Lemma~\ref{lemma:one}, this is
the only conical singularity in~$D$.  Since~$D$ is convex (and
nonsimply connected), there is a length-minimizing noncontractible
loop~$\gamma$ based at~$p$ lying in~$D$.  We apply the kite excision
trick to an admissible diamond with diagonal the geodesic
arc~$\gamma$, starting and ending at~$p$, of width small enough to
satisfy~($D_1$), and derive a contradiction as in the proof of
Lemma~\ref{lemma:one}.  This shows that~$D$ has no conical
singularity.

By the Gauss--Bonnet formula for surfaces with boundary, the Euler
characteristic of the (orientable) flat surface~$D\subseteq M$ with
convex boundary is nonnegative.  This implies that~$D$ is a flat
cylinder (by assumption, it is not a disk).  In this case, it also
follows from the Gauss--Bonnet formula that the cylinder~$D=S^1\times
I$ has geodesic boundary components.  Since the cylinder is
nonsystolic, its boundary loops are systolic geodesics, and any closed
geodesic not parallel to the boundary must have length strictly
greater than~$\sys(M)$.  Therefore we can slightly shrink the
height~$I$ of the cylinder without affecting the systole of~$M$, and
respecting the condition of nonpositive curvature on~$M$.  This
contradicts the local extremality of the surface.  Hence, the
domain~$D$ is simply connected and so is a disk.
\end{proof}

As a consequence of Proposition~\ref{prop:one}, the systolic part
of~$M$, defined as the union of its systolic loops, is obtained by
removing finitely many open disks from the surface.  In particular, we
obtain the following corollary.

\begin{corollary}
The systolic part of a locally extremal nonpositively curved surface
is path-connected.
\end{corollary}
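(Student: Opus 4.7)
The plan is to deduce the corollary directly from Proposition~\ref{prop:one} together with the systolic decomposition of Section~\ref{sec:decomp}. First, I would identify the systolic part $\Sigma \subseteq M$ (i.e., the union of all systolic loops) with the union of the systolic domains $S_i$ in the decomposition $M = (\sqcup_i S_i) \sqcup (\sqcup_j D_j)$: by Definition~\ref{def:bands} every systolic loop lies in a systolic band, every systolic band lies in some systolic domain, and conversely every point of a systolic band lies on a systolic loop of that band. Consequently $M \setminus \Sigma = \sqcup_j D_j$ is the disjoint union of the nonsystolic domains, which are finite in number by Proposition~\ref{prop:nonsystolic domains} and each homeomorphic to an open disk by Proposition~\ref{prop:one}.

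Next, I would show $\Sigma$ is connected by a standard separation argument. Suppose for contradiction $\Sigma = A \sqcup B$ with $A, B$ nonempty and relatively closed in $\Sigma$; since $\Sigma$ is closed in $M$, both $A$ and $B$ are closed in $M$. For each index $j$, the polygonal structure on $\partial D_j$ described in Section~\ref{sec:decomp}, together with the fact that $D_j$ is an embedded open disk with one boundary circle, ensures that $\partial D_j$ is a connected closed curve in $\Sigma$. Hence $\partial D_j$ lies entirely in $A$ or entirely in $B$. Adjoining each closed disk $\overline{D_j}$ to the side containing $\partial D_j$ yields a partition $M = A' \sqcup B'$ into two nonempty closed sets, contradicting the connectedness of $M$.

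Finally, $\Sigma$ is locally path-connected: away from the finitely many conical singularities it is locally Euclidean, and at each singularity a small metric ball is still path-connected. Combined with the connectedness established above, this shows $\Sigma$ is path-connected, as claimed. I do not anticipate a substantive obstacle: all the geometric work is carried by Proposition~\ref{prop:one}, and what remains is routine point-set topology; the only care needed is to verify that each nonsystolic open disk has connected frontier, which follows from the finite polygonal description of nonsystolic domains in Section~\ref{sec:decomp}.
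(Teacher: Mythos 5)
Your proposal is correct and takes essentially the same route as the paper: the paper's proof is precisely that, by Proposition~\ref{prop:one} and the finiteness of the systolic decomposition, the systolic part is the complement of finitely many open (polygonal) disks in the closed surface, hence path-connected, and your separation argument plus the connectedness of each $\partial D_j$ (which does follow from the finite polygonal structure, via the completion of $D_j$ being a closed disk whose boundary circle maps onto $\partial D_j$) just spells out the point-set details the paper leaves implicit. One minor caveat: the systolic part is not locally Euclidean away from the conical singularities (an isolated systolic loop is one-dimensional), but local path-connectedness still holds because near any point it is a finite union of closed systolic bands through that point, so your conclusion is unaffected.
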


\end{document}